\newcommand{\Z}{{\mathbb Z}}
\newcommand{\F}{{\mathbb F}}
\newcommand{\Q}{{\mathbb Q}}
\newcommand{\R}{{\mathbb R}}
\newcommand{\myell}{l}  %alternative {{\ell}}
\newcommand{\GL}{\operatorname{GL}}
\newcommand{\eps}{\varepsilon}  %alternative {\epsilon}
\newcommand{\myleq}{\leqslant}  %alternative {\leq}
\newcommand{\mygeq}{\geqslant}  %alternative {\geq}
\newtheorem{theorem}{Theorem}[section]
\newtheorem{lemma}[theorem]{Lemma}
\newtheorem{proposition}[theorem]{Proposition}
\newtheorem{corollary}[theorem]{Corollary}
\theoremstyle{definition}
\newtheorem{definition}[theorem]{Definition}
\newtheorem{definitionlemma}[theorem]{Definition/Lemma}
\newtheorem{remark}[theorem]{Remark}
\newenvironment{proofof}[1]{\par\noindent{\em Proof of #1.}}%
                        {\hspace*{\fill}\nobreak$\Box$\par\medskip}
\title{The density of integral quadratic forms having a \\
        $k$-dimensional totally isotropic subspace}
\author{Lycka Drakengren and Tom Fisher}
\date{20th December 2022}
\begin{document}
\maketitle

\begin{abstract}
  We investigate the probability that a random quadratic form
  in $\Z[x_1,...,x_n]$ has a totally isotropic subspace of a
  given dimension. We show that this global probability is a
  product of local probabilities. Our main result computes
  these local probabilities for quadratic forms over the $p$-adics.
  The formulae we obtain are rational functions in $p$ invariant
  upon substituting $p \mapsto 1/p$.
\end{abstract}

\vspace{8pt}

\section{Introduction}

An integral quadratic form in $n$ variables is a homogeneous
polynomial of degree $2$
\begin{equation}\label{Q}
    Q(x_1,x_2,\ldots,x_n) = \sum_{1\myleq i \myleq j \myleq n}a_{ij}x_ix_j
\end{equation}
where the coefficients $a_{ij}$ belong to $\Z$.  A non-zero vector
$(x_1,x_2, \ldots ,x_n) \in V = \Q^n$ is called \textit{isotropic} if
$Q(x_1,x_2,\ldots,x_n) = 0$. A subspace of $V$ is {\em totally
  isotropic} if all its non-zero vectors are isotropic. We say that $Q$ is
{\em $k$-isotropic} if $V$ has a $k$-dimensional totally isotropic
subspace. A quadratic form that is $1$-isotropic is simply called
isotropic.

Generalising the results of \cite{BCFJK}, where only the case $k=1$
was considered, we investigate the probability $\rho_{\rm glob}(k,n)$
that a random integral quadratic form in $n$ variables is
$k$-isotropic. More formally we define
\begin{equation}
\label{def:global}
\rho_{\rm glob}(k,n) = \lim_{H \to \infty}
\frac{ \# \left\{ \begin{array}{c}
  \text{  quadratic forms $Q = \textstyle\sum a_{ij} x_i x_j
  \in \Z[x_1, \ldots, x_n]$} \\ \text{ 
  with $|a_{ij}| \myleq H$ that are $k$-isotropic over $\Q$ }
\end{array} \right\} }{(2H)^{n(n+1)/2}} 
\end{equation}
if this limit exists.

Combining the Strong Hasse Principle \cite[p. 75]{Cassels-RQF} and
Witt's Cancellation Theorem (see Theorem~\ref{Witt}), we know that an
integral quadratic form is $k$-isotropic over $\Q$ if and only if it
is $k$-isotropic over $\Q_p$ for all primes $p$ and over $\mathbb{R}$.
Applying a theorem of Poonen and Stoll \cite{PS} we deduce the
following result.

\begin{theorem}
\label{thm:glob}
The probability $\rho_{\rm glob}(k,n)$ that a random integral
quadratic form in $n$ variables is $k$-isotropic exists and is given
by
\[ \rho_{\rm glob}(k,n) = \rho_\infty(k,n) \prod_{p} \rho_p(k,n) \]
where the product is over all primes $p$, and the local contributions
are the probabilities of $k$-isotropy over $\R$ and over $\Q_p$,
%defined for example by taking a limit as in~\eqref{def:global}.
defined as in~\eqref{def:global} but with $\Q$ replaced by $\R$ or $\Q_p$
% as defined by~\eqref{def:global} with $\Q$ replaced by $\R$ or $\Q_p$
as appropriate.
\end{theorem}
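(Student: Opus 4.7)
Here is the plan: translate the global $k$-isotropy condition into a conjunction of purely local conditions, and then invoke the Poonen--Stoll theorem \cite{PS} to convert a density defined by local conditions into a product of local densities.

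The first step is to prove that, for a non-degenerate integral form $Q$, being $k$-isotropic over $\Q$ is equivalent to being $k$-isotropic over $\R$ and over $\Q_p$ for every prime $p$. The forward direction is immediate. For the converse, the case $k = 1$ is the Strong Hasse Principle. For $k \mygeq 2$, I would induct on $k$: by the $k = 1$ case, $Q$ admits a $\Q$-rational isotropic vector and hence a Witt decomposition $Q \cong \mathbb{H} \oplus Q'$ over $\Q$, where $\mathbb{H}$ denotes the hyperbolic plane. Witt's Cancellation Theorem (Theorem~\ref{Witt}), applied over $\Q$ and over each completion $F$, makes $Q'$ well defined up to isometry and identifies $Q' \otimes F$ with the complement of a hyperbolic summand in a Witt decomposition of $Q \otimes F$. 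Consequently $Q'$ is $(k-1)$-isotropic over $\R$ and every $\Q_p$, so the induction hypothesis applies. The degenerate locus $\{\det Q = 0\}$, being cut out by a single non-zero polynomial of degree $n$, has density zero in $\Z^{n(n+1)/2}$ and may be discarded throughout.

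Let $N = n(n+1)/2$, and set $S_p \subseteq \Z_p^N$ (respectively $S_\infty \subseteq \R^N$) to be the set of forms that are $k$-isotropic over $\Q_p$ (respectively $\R$), identified with their coefficient vectors. By the previous paragraph, up to a set of density zero the integral forms counted in \eqref{def:global} are exactly $\Z^N \cap H \cdot (S_\infty \cap [-1,1]^N) \cap \bigcap_p S_p$. The Poonen--Stoll theorem then delivers the desired product formula, provided that each $S_p$ is open with $p$-adic boundary of measure zero, that $S_\infty$ has Lebesgue boundary of measure zero, and that the $p$-adic measure of forms lying close to $\partial S_p$ decays uniformly enough in $p$ for the infinite product to converge absolutely.

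The main obstacle is verifying these hypotheses. All three reduce to a single observation: over a local field $F$, the isometry class of a non-degenerate quadratic form is determined by discrete invariants (signature for $F = \R$; discriminant class in $\Q_p^\times/(\Q_p^\times)^2$ together with the Hasse--Witt invariant for $F = \Q_p$), each of which is locally constant on the open set of non-degenerate forms. Hence $k$-isotropy itself is locally constant there, and the topological boundary of $S_p$ (or $S_\infty$) is contained in the degenerate locus $\{\det Q = 0\}$. Because $\det Q$ is a fixed non-zero polynomial, this locus has $p$-adic (respectively Lebesgue) measure zero, with a Lang--Weil-type bound $\#\{Q \bmod p^m : \det Q \equiv 0\} = O(p^{m(N-1)})$; this supplies the boundary condition at every place and the uniform tail estimate required by \cite{PS}.
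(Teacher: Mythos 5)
Your first paragraph (the local--global principle for $k$-isotropy via induction on $k$, splitting off a hyperbolic plane and applying Witt cancellation at every place) is correct and matches what the paper uses. The gap is in your verification of the Poonen--Stoll hypotheses. The tail condition in \cite{PS} is not about the measure of points near the topological boundary $\partial S_p$; it is about the sets $U_p = \Z_p^N \setminus S_p$ themselves: one needs the upper density of $\bigcup_{p \mygeq M} U_p$ to tend to $0$ as $M \to \infty$, and the sufficient criterion in \cite[Lemma 21]{PS} asks for \emph{two} coprime polynomials $f,g$ with $U_p \subseteq \{f \equiv g \equiv 0 \bmod p\}$ for large $p$, so that $\mu_p(U_p) = O(1/p^2)$ is summable. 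Your single polynomial $\det Q$ cannot do this job: the locus $\{\det Q \equiv 0 \bmod p\}$ has density $\asymp 1/p$, and $\sum_p 1/p$ diverges. The paper instead takes $f,g$ to be two distinct $(n-1)\times(n-1)$ minors, i.e.\ works with the codimension-two locus where the reduction mod $p$ has rank at most $n-2$; the substantive input (Lemma~\ref{lem:admiss}, via Hensel's lemma in the form of Lemma~\ref{henselcor}) is that for $n \mygeq 2k+2$ every form whose reduction mod $p$ has rank at least $n-1$ is already $k$-isotropic over $\Q_p$, so that $U_p$ really does land in that codimension-two locus. You have conflated the per-prime boundary-measure-zero condition (which your locally-constant-invariants argument does establish) with the uniform tail estimate (which it does not).

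There is a second, independent failure: your argument is silent on the range $n \myleq 2k+1$, where no version of the Poonen--Stoll criterion can apply because $\mu_p(U_p)$ does not tend to $0$ as $p \to \infty$ (for instance $\rho_p(k,2k) \to \tfrac12$, since a form whose reduction mod $p$ is regular of class $[k-1,2,2k]$ has unit determinant yet is not $k$-isotropic over $\Q_p$). In that range the infinite product diverges to $0$ and the theorem holds because both sides vanish; the paper proves this separately by bounding $\overline{\rho}_{\rm glob}(k,n)$ above by the finite product $\prod_{p<M}\rho_p(k,n)$ and letting $M \to \infty$, using the asymptotics of $\rho_p(k,n)$ from Theorem~\ref{thm:main} (Corollary~\ref{asymptotic}). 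Your proposal needs both repairs: replace the determinant by the rank-$\myleq n-2$ condition and prove the analogue of Lemma~\ref{lem:admiss} for $n \mygeq 2k+2$, and supply a separate argument for $n \myleq 2k+1$.
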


We fix a prime number $p$. The probability $\rho_p(k,n)$ may also be
interpreted as the probability that a random $p$-adic integral
quadratic form in $n$ variables is $k$-isotropic over $\Q_p$. Here, by
a random $p$-adic integral quadratic form, we mean a quadratic form
with coefficients in $\Z_p$ where the coefficients are chosen
independently at random according to Haar measure. Choosing the
coefficient $a_{ij} \in \Z_p$ with respect to Haar measure means that
each congruence class modulo $p$ is equally likely, and inductively
for $n >1$, the classes
\[a,\: a+p^{n-1},\: a+2p^{n-1},\: \ldots,\: a+(p-1)p^{n-1} \mod p^n\]
are equally likely where $0\myleq a \myleq p^{n-1}-1$ is the reduction
of $a_{ij}$ modulo $p^{n-1}$.

We now state our main theorem. %Our main theorem 
It extends \cite[Theorem 1.2]{BCFJK} which treated the case $k=1$.

\begin{theorem}
\label{thm:main}
The probability $\rho_p(k,n)$ that a random $p$-adic integral quadratic
form in $n$ variables is $k$-isotropic over $\Q_p$ is given by
\begin{equation*}
  \rho_p(k,n) = \left\{ \begin{array}{ll} \vspace{1ex}
  0 & \text{ if } n \myleq 2k-1; \\ \vspace{1ex}
  \frac{1}{4}\cdot(p^k+1)\cdot\left(\frac{p^{k+2}-1}{(p+1)(p^{2k+1}-1)}
  + \prod_{i=1}^{k}\left(\frac{p^{2i-1}-1}{p^{2i}-1}\right)\right)
  & \text{ if } n = 2k; \\ \vspace{1ex}
 \frac{1}{2}+ \frac{1}{2}\cdot(p^{k+1}+1)\cdot\prod_{i=1}^{k+1}
  \left(\frac{p^{2i-1}-1}{p^{2i}-1}\right) & \text{ if }
  n = 2k+1; \\ \vspace{1ex}
  1-\frac{1}{4}\cdot(p^{k+1}+1)\cdot\left(\frac{p^{k+3}-1}{(p+1)(p^{2k+3}-1)}
  - \prod_{i=1}^{k+1}\left(\frac{p^{2i-1}-1}{p^{2i}-1}\right)\right)
  & \text{ if } n = 2 k + 2; \\
  1 &  \text{ if } n \mygeq 2k + 3. \end{array} \right.
\end{equation*}
\end{theorem}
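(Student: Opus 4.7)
The plan is to reduce the measure-theoretic problem over $\Z_p^{n(n+1)/2}$ to a classification question over $\Q_p$, and then to evaluate the resulting measures by stratifying $\Z_p$-forms according to their mod $p$ Jordan decomposition.

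Since the locus of $\Q_p$-degenerate forms has Haar measure zero, one may restrict to non-degenerate $Q$. By Witt's theorem $Q \cong H^m \oplus Q_0$ with $Q_0$ anisotropic, and $Q$ is $k$-isotropic if and only if the Witt index $m$ is at least $k$, equivalently $\dim Q_0 \leq n-2k$. Over $\Q_p$ the anisotropic dimension is at most $4$ and has the same parity as $n$, which immediately yields $\rho_p(k,n) = 0$ for $n \leq 2k-1$ and $\rho_p(k,n) = 1$ for $n \geq 2k+3$. The three nontrivial cases are $n \in \{2k, 2k+1, 2k+2\}$, for which the $k$-isotropy condition becomes respectively ``$Q$ is hyperbolic'', ``$\dim Q_0 = 1$'', and ``$\dim Q_0 \in \{0,2\}$''. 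Using the classification of non-degenerate $\Q_p$-forms by discriminant and Hasse--Witt symbol, the problem reduces to computing the Haar measure of the set of $\Z_p$-forms whose Witt class lies in the allowed family.

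For this computation I would stratify $\Z_p^{n(n+1)/2}$ by the mod $p$ rank of $Q$. On the open stratum where $Q$ is non-degenerate modulo $p$, the Witt invariants are determined by the mod $p$ reduction, and the measure of each piece is a count of quadratic forms over $\F_p$ in each invariant class. On the degenerate strata, a change of basis over $\Z_p$ splits $Q = Q_1 \perp p Q_2$ with $Q_1$ unimodular of smaller rank and $Q_2$ an arbitrary form in the complementary variables, reducing the problem recursively to a lower-dimensional one. This is exactly the strategy used for the case $k=1$ in \cite{BCFJK}, and it extends to general $k$ because the $k$-isotropy condition depends only on the Witt class of $Q$. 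Unwinding the resulting recursion amounts to summing a few geometric series in $p$, from which the stated rational functions of $p$ should emerge; the palindromic symmetry $p \mapsto 1/p$ visible in the theorem then reflects the symmetry between unimodular and $p$-scaled strata in the Jordan decomposition. The main obstacle will be the bookkeeping: one must simultaneously track both the discriminant and the Hasse--Witt symbol through the Jordan shift (with each shift permuting the invariants in a controlled but nontrivial way), and handle the prime $p=2$ separately since $\Q_2^\times/(\Q_2^\times)^2$ has order $8$ rather than $4$ and the Hasse--Witt symbol picks up extra corrections there.
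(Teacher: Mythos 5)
Your outline follows, in essence, the paper's second strategy: discard the measure-zero singular locus, translate $k$-isotropy into conditions on the Witt class and hence on the pair $(d(Q),c(Q))$, and compute the distribution of these invariants by stratifying according to the reduction mod $p$ and recursing through the Jordan splitting $Q = Q_1 \perp pQ_2$. For odd $p$ this is exactly what the paper does (quoting Kovaleva, Theorem~\ref{kovaleva}, for the distribution of the invariants), and your identification of the trivial ranges $n \myleq 2k-1$, $n \mygeq 2k+3$ and of the three nontrivial Witt conditions matches Lemma~\ref{rho-P}. Two caveats even in the odd case: the recursion does not simply ``unwind into a few geometric series'' --- after one Jordan shift the rescaled complement is again a random form of the same type, so one gets a self-referential linear system whose solution requires the orbit-counting identities of the Appendix; and one must verify that the Legendre symbols appearing in the invariant counts cancel to leave a rational function of $p$, which is where the specific values $d(Q)=(-1)^k$, $c(Q)=(-1,a)^k$, etc.\ do real work.

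The genuine gap is $p=2$, and your plan to ``handle it separately'' is where the argument would stall. At $p=2$ the premises of your stratification fail: a form that is non-degenerate mod $2$ does \emph{not} have its discriminant and Hasse invariant determined by its mod $2$ reduction (one needs congruence information mod $8$); the Jordan decomposition over $\Z_2$ requires indecomposable $2\times 2$ blocks, so the splitting $Q = Q_1 \perp 2Q_2$ with $Q_1$ unimodular diagonal is not available; and random quadratic forms and random symmetric matrices have different distributions over $\Z_2$, so Kovaleva-type counts do not transfer. The paper sidesteps all of this with an interpolation argument you are missing: an independent recursion built by splitting off hyperbolic planes (which makes sense in characteristic $2$ via the quadratic-form radical and the orthogonal group orders of Lemma~\ref{snk}) shows that $\rho_p(k,n)$ is one fixed rational function of $p$ valid for \emph{all} primes (Proposition~\ref{cansolve}); since the odd-$p$ computation verifies the stated formula at infinitely many primes, the two rational functions coincide and the case $p=2$ follows for free. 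Without this, or a genuine characteristic-$2$ theory of the invariants, your proof establishes the theorem only for odd $p$.
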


Combining Theorems~\ref{thm:glob} and~\ref{thm:main} we deduce the following.
\begin{corollary}
\label{cor:prod}
We have $\rho_{\rm glob}(k,n) = 0$ for all $n \myleq 2k + 1$,
\[ \hspace{-1em} \rho_{\rm glob}(k,2k+2) = \rho_\infty(k,2k+2) \cdot
  \prod_p \left( 1 - \frac{p^{k+1}+1}{4}\cdot\left( \frac{p^{k+3} -
        1}{ (p + 1)(p^{2k+3} - 1)} - \prod_{r=1}^{k+1} \frac{p^{2r-1}
        - 1}{p^{2r} - 1} \right) \right) \] and
$\rho_{\rm glob}(k,n) = \rho_\infty(k,n)$ for all $n \mygeq 2k + 3$.
\end{corollary}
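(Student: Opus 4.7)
My approach is to combine the factorization $\rho_{\rm glob}(k,n) = \rho_\infty(k,n) \prod_p \rho_p(k,n)$ from Theorem~\ref{thm:glob} with the explicit values of $\rho_p(k,n)$ supplied by Theorem~\ref{thm:main}, and then carry out a case analysis on $n$.

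Three of the ranges are essentially a direct substitution. If $n \myleq 2k - 1$, Theorem~\ref{thm:main} gives $\rho_p(k,n) = 0$ for every prime $p$, so $\rho_{\rm glob}(k,n) = 0$. If $n \mygeq 2k + 3$, every local factor equals $1$, so $\rho_{\rm glob}(k,n) = \rho_\infty(k,n)$. If $n = 2k + 2$, I would read off the fourth line of Theorem~\ref{thm:main}, insert it under the product over primes, and observe that Theorem~\ref{thm:glob} already guarantees this product converges.

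The remaining task is to verify that $\rho_{\rm glob}(k,n) = 0$ for $n = 2k$ and $n = 2k+1$, i.e.\ that the Euler products $\prod_p \rho_p(k, 2k)$ and $\prod_p \rho_p(k, 2k+1)$ vanish despite each local factor being strictly positive. I would do this by asymptotic analysis in $1/p$. For $n = 2k$, I expand the two summands $\frac{p^{k+2}-1}{(p+1)(p^{2k+1}-1)}$ and $\prod_{i=1}^{k}\frac{p^{2i-1}-1}{p^{2i}-1}$ as Laurent series in $p^{-1}$: each is $p^{-k}(1 + O(p^{-1}))$, and combining with the prefactor $\frac{1}{4}(p^k+1)$ gives $\rho_p(k,2k) \to \frac{1}{2}$ as $p \to \infty$; since $\rho_p$ is then bounded away from $1$ for all sufficiently large $p$, the infinite product vanishes. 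For $n = 2k + 1$, the analogous expansion (using the telescoping $\frac{p-1}{p^2-1} = \frac{1}{p+1}$ to handle the $i=1$ factor of the product) yields $1 - \rho_p(k, 2k+1) = \frac{1}{2p} + O(p^{-2})$, and divergence of $\sum_p 1/p$ then forces $\prod_p \rho_p(k, 2k+1) = 0$.

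The main obstacle is carrying out the two Laurent expansions carefully enough to identify the leading constant and the first-order correction; beyond this bookkeeping, no step is substantive, and the final statement is then immediate upon substituting into Theorem~\ref{thm:glob}.
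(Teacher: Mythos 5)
Your proposal is correct and follows essentially the same route as the paper: the corollary is obtained by substituting Theorem~\ref{thm:main} into the factorization of Theorem~\ref{thm:glob}, with the vanishing of the local product for $n \in \{2k, 2k+1\}$ established by exactly the Laurent expansions in $1/p$ that the paper records as Corollary~\ref{asymptotic} (leading term $\tfrac12$ when $n=2k$, and $1-\tfrac{1}{2p}+O(p^{-2})$ together with the divergence of $\sum_p 1/p$ when $n=2k+1$).
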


We note two striking features of the formulae in
Theorem~\ref{thm:main}.  The first is that they are rational functions
in $p$, where the same rational function works for all primes $p$
including $p=2$. The second is that the rational functions are
invariant upon substituting $p \mapsto 1/p$.  Exactly the same two
observations were made in~\cite{BCFG} in connection with roots of
polynomials in one variable. Moreover in that paper the substitution
$p \mapsto 1/p$ also related two auxiliary probabilities appearing in
the recursion, denoted there by $\alpha$ and $\beta$.  We find that an
analogous statement holds in our case; see
Corollary~\ref{cor:p<->1/p}.

We employ two strategies for proving Theorem~\ref{thm:main}.  The first
is a direct generalisation of the method in \cite{BCFJK} (which only
treated the case $k=1$), with the additional idea of splitting off
hyperbolic planes (see Definition~\ref{def:hyp}). 
This leads to recursive formulae that may be used
to compute $\rho_p(k,n)$ for any given $k$ and $n$, and also show that
the answer is always a rational function in $p$.  However it is still
hard to write the answers uniformly in $k$, as we do in the statement
of Theorem~\ref{thm:main}.
 
The second strategy %, described in Section~\ref{sec:classification},
is to deduce Theorem~\ref{thm:main} from a theorem of Kovaleva
\cite{K}, who computed the probability that a random $p$-adic integral
quadratic form in $n$ variables belongs to a given $\Q_p$-equivalence
class of quadratic forms. The answers she obtained are not rational
functions in $p$, do not exhibit the $p \leftrightarrow 1/p$
symmetries, and do not explicitly cover the case $p = 2$, where in any 
case it makes a difference whether we consider random quadratic forms 
or random symmetric matrices. %rely on an identification of quadratic 
% forms with symmetric matrices that is not valid when $p=2$. 
However her work leads to a proof of Theorem~\ref{thm:main} when $p$ is odd. 
Since we already know from the first strategy that the answer is a rational
function in~$p$ %(with the rational function independent of the prime~$p$)
it follows that the theorem is also true when $p=2$.

Both strategies work by dividing into cases according to the
$\F_p$-equivalence class of the quadratic form reduced mod $p$, and
from this obtaining recursive formulae for the probabilities.  One
difference, not already noted above, is that in the second strategy
the quadratic form is diagonalised, whereas in the first we split off
hyperbolic planes, and so allow $2 \times 2$ blocks on the diagonal.

In Section~\ref{sec:background} we review some background on quadratic
forms. In Section~\ref{sec:global} we discuss the global applications
of our work, and in particular explain how Theorem~\ref{thm:glob} and
Corollary~\ref{cor:prod} follow from Theorem~\ref{thm:main}.  In
Section~\ref{sec:count} we prove some results on counting quadratic
forms over finite fields, in preparation for our first strategy for
proving Theorem~\ref{thm:main}. The two strategies are explained in
Sections~\ref{sec:method1} and~\ref{sec:classification} respectively.
Finally in Appendix~\ref{app} we adapt the methods of Kovaleva to
solve the recurrence relations in our first method directly.

This work originated as a summer project carried out by the first author 
and supervised by the second author. We thank the Trinity Summer Studentship 
Scheme and the Research in the CMS Programme for their support.

\section{Background on quadratic forms}
\label{sec:background}
We collect together some standard definitions and results on quadratic
forms.  See Cassels~\cite{Cassels-RQF} for further details.
We write $\F$ for a general field, and $V$ for a finite dimensional
vector space over $\F$.

\begin{definition}
  A \textit{quadratic form} of dimension $n$ over $\F$ is
  a polynomial
\begin{equation}\label{Q2}
    Q(x_1,x_2,\ldots,x_n) = \sum_{1\myleq i \myleq j \myleq n}a_{ij}x_ix_j,
\end{equation}
where the coefficients $a_{ij}$ for $1\myleq i\myleq j\myleq n$ belong
to $\F$. We may also consider $Q$ as a function $V \to \F$ where
$V = \F^n$, and refer to the pair $(V,Q)$ as a {\em quadratic space}.
The corresponding symmetric bilinear form $\phi : V \times V \to \F$
is given by
\begin{equation*}
    \phi(x,y) = Q(x+y)-Q(x)-Q(y),
\end{equation*} where $x=(x_1,x_2,\ldots,x_n)$ and $y=(y_1,y_2,\ldots,y_n)$.
\end{definition}

We refer to properties of a quadratic space $(V,Q)$ and properties of
$V$ or $Q$ interchangeably.

\begin{definition}
  Let $(V,Q)$ be a quadratic space and let $\phi$ be its associated
  symmetric bilinear form.  The \textit{radical} of $(V,Q)$, when not
  over a field of characteristic $2$, is the vector space consisting
  of vectors $x \in V$ such that $\phi(x,y)=0$ for all $y \in V$. In
  characteristic $2$, we further require that $Q(x)=0$.  A quadratic
  space is \textit{regular} if its radical is zero-dimensional and
  \textit{singular} otherwise.  The \textit{rank} of the quadratic
  form $Q$ is $n-r$, where $r$ is the dimension of the radical.
\end{definition}

\begin{definition}
  Quadratic spaces $(V_1,Q_1)$ and $(V_2,Q_2)$ over the same field
  $\F$ are \textit{isometric} if there is a linear isomorphism
  $T: V_1 \rightarrow V_2$ such that $Q_1(x) = Q_2(Tx)$ for all
  $x \in V_1$. In this situation, the forms $Q_1$ and $Q_2$ are said
  to be \textit{equivalent} over $\F$.  In other words, quadratic
  forms over $\F$ are equivalent if they are related by a linear
  substitution given by a matrix $P \in \GL_n(\F)$.  This defines an
  equivalence relation on the set of quadratic forms with coefficients
  in $\F$. More generally, if $R \subset \F$ is a subring, then we say
  that quadratic forms are {\em equivalent over $R$} (or {\em
    $R$-equivalent}) if they are related by a matrix $P \in \GL_n(R)$.
\end{definition}

The next two definitions are closely related. The first naturally
extends the definitions we already gave in the introduction in the
case $\F = \Q$.

\begin{definition}
  Let $(V,Q)$ be a quadratic space. A non-zero vector $x \in V$ is
  called \textit{isotropic} if $Q(x) = 0$. A quadratic space $(V,Q)$
  is {\em isotropic} if $V$ contains an isotropic vector, and
  \textit{totally isotropic} if all its non-zero vectors are
  isotropic. If $V$ has a subspace $V_0$ of dimension $k$ such that
  the quadratic space $(V_0,Q)$ is totally isotropic, then we say that
  the quadratic space $(V,Q)$ is {\em $k$-isotropic}.  In particular,
  a quadratic space is $1$-isotropic if and only if it is isotropic.
\end{definition}

\begin{definition} \label{def:hyp}
  A \textit{hyperbolic plane} is a quadratic space $(V,Q)$ of
  dimension $2$ where $Q$ is equivalent over $\F$ to the form
  $q(x_1,x_2) = x_1x_2$.
\end{definition}

\begin{lemma}\label{isohyp}
  A regular quadratic space $(V,Q)$ is isotropic if and only if $V$
  has a subspace $V_0$ such that the quadratic space $(V_0,Q)$ is a
  hyperbolic plane.
\end{lemma}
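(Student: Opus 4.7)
The plan is to prove the two directions separately, with the forward direction being the substantive content. The reverse direction is immediate: if $V_0 \subseteq V$ is a hyperbolic plane, then in coordinates where $Q|_{V_0}(x_1, x_2) = x_1 x_2$, the vector $(1,0)$ is a non-zero isotropic element of $V$.

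For the forward direction, I would start with a non-zero isotropic vector $v \in V$, so $Q(v) = 0$, and aim to extend it to a basis of a hyperbolic plane. The key observation is that regularity of $(V,Q)$ forces $v$ to pair non-trivially with some vector under $\phi$. In characteristic different from $2$ this is immediate: $v \neq 0$ cannot be in the radical. In characteristic $2$, the definition of radical requires both $Q(v) = 0$ and $\phi(v, \cdot) \equiv 0$; since the former holds while $v$ is not in the radical, the latter must fail. Either way, choose $u \in V$ with $\phi(v,u) \neq 0$ and scale so that $\phi(v,u) = 1$. Note that $v$ and $u$ are automatically linearly independent, since $\phi(v,v) = 2Q(v) = 0$.

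Next, modify $u$ by a scalar multiple of $v$ to make it isotropic: set $w = u - Q(u) v$. A short computation using the identity $Q(\alpha v + \beta u) = \alpha^2 Q(v) + \beta^2 Q(u) + \alpha\beta\, \phi(v,u)$ shows $Q(w) = 0$, while bilinearity of $\phi$ and $\phi(v,v) = 0$ give $\phi(v,w) = \phi(v,u) = 1$. The subspace $V_0 = \operatorname{span}(v, w)$ is therefore two-dimensional, and on the basis $(v,w)$ the restriction of $Q$ takes the form $Q(x_1 v + x_2 w) = x_1 x_2$, which is a hyperbolic plane by definition.

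There is no serious obstacle; the only delicate point is that the definition of radical splits by characteristic, so one must verify uniformly that an isotropic vector in a regular space always admits a $\phi$-partner. Once this is in hand, the classical trick of subtracting a multiple of $v$ from $u$ to clear $Q(u)$ produces the required hyperbolic pair.
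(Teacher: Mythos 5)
Your proof is correct and is the classical argument: the paper itself gives no proof but simply cites Cassels (p.~15), where essentially this same construction appears, namely finding a $\phi$-partner $u$ for the isotropic vector $v$ via regularity and then replacing $u$ by $u - Q(u)v$ to obtain a hyperbolic pair. Your extra care with the characteristic-$2$ case of the radical matches the paper's definition and closes the only potential gap.
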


\begin{proof}
See \cite[p. 15]{Cassels-RQF}.
\end{proof}

We now introduce some theorems that will be useful for finding
isotropic spaces.

\begin{theorem}[Witt's Cancellation Theorem]\label{Witt}
  Let $(V,Q)$ be a quadratic space. Let $V_1$, $V_2$ be subspaces of
  $V$. Denote by $V_1^\perp$ and $V_2^\perp$ the orthogonal
  complements of $V_1$ resp. $V_2$ in $V$. If $(V_1,Q)$ and $(V_2,Q)$
  are regular and isometric, then $(V_1^\perp,Q)$ and $(V_2^\perp,Q)$
  are also isometric.
\end{theorem}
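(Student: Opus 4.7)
The plan is to reduce the statement to showing that the given isometry $T \colon V_1 \to V_2$ extends to an isometry $\widetilde T$ of the whole space $V$. Since $V_1$ and $V_2$ are regular, one has orthogonal decompositions $V = V_1 \oplus V_1^\perp = V_2 \oplus V_2^\perp$, and any extension $\widetilde T$ then carries $V_1^\perp$ onto a subspace of $V_2^\perp$ of the same dimension, giving the required isometry between the orthogonal complements. So the whole content of the theorem lies in the extension step, and I would prove this by induction on $d = \dim V_1 = \dim V_2$.

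For the base case $d = 1$, write $V_1 = \langle u \rangle$ and $V_2 = \langle v \rangle$; regularity forces $Q(u) = Q(v) \neq 0$. The aim is to produce an isometry of $V$ sending $u$ to $v$, using reflections $\tau_w(x) = x - \frac{\phi(x,w)}{2Q(w)} w$ (well-defined when $Q(w) \neq 0$, and working in characteristic $\neq 2$). A short computation shows that if $Q(u-v) \neq 0$ then $\tau_{u-v}(u) = v$, while the polarisation identity $Q(u+v) + Q(u-v) = 2Q(u) + 2Q(v) = 4Q(u)$ ensures that if $Q(u-v) = 0$ then $Q(u+v) \neq 0$, in which case $\tau_v \circ \tau_{u+v}$ does the job.

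For the inductive step, since $V_1$ is regular and nonzero, it contains an anisotropic vector $u$, and we may write $V_1 = \langle u \rangle \oplus W_1$ orthogonally, with $W_1 = V_1 \cap u^\perp$ again regular. Setting $v = Tu$ and $W_2 = T(W_1) = V_2 \cap v^\perp$, the base case gives an isometry $\sigma$ of $V$ with $\sigma(u) = v$. Then $W_2' := \sigma^{-1}(W_2) \subset u^\perp$ is regular and, via $\sigma^{-1} \circ T$, isometric to $W_1$. Apply the inductive hypothesis inside the smaller quadratic space $(u^\perp, Q)$, whose dimension is $\dim V - 1$: the orthogonal complements of $W_1$ and $W_2'$ in $u^\perp$ are isometric. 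These two complements are exactly $V_1^\perp$ and $\sigma^{-1}(V_2^\perp)$, and composing with $\sigma$ gives the desired isometry $V_1^\perp \to V_2^\perp$.

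The main subtlety is the base case: the need to consider both $\tau_{u-v}$ and $\tau_{u+v}$, and — more seriously — the characteristic $2$ case, where the polarisation identity degenerates and reflections alone do not suffice, so one must instead invoke Eichler transformations or a direct argument using the definition of radical that already appears in Section~\ref{sec:background}. For the applications in this paper the relevant fields are $\Q$, $\Q_p$ and $\R$, all of characteristic $0$, so the reflection argument above is sufficient; for the full statement as written one would follow Cassels~\cite{Cassels-RQF} to handle characteristic $2$.
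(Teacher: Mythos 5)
The paper does not actually prove this statement: it simply cites Grove \cite{G} (pp.~89--92 for characteristic $\neq 2$, p.~118 for characteristic $2$). You instead supply the standard proof --- induction on $\dim V_1$, with the base case done by reflections and the inductive step by splitting off an anisotropic vector --- and your argument is structurally correct in characteristic $\neq 2$: the identity $\phi(u,u-v)=Q(u-v)$ when $Q(u)=Q(v)$ makes $\tau_{u-v}$ send $u$ to $v$, the polarisation identity rescues the degenerate case via $\tau_v\circ\tau_{u+v}$, and the reduction to $(u^\perp,Q)$ correctly identifies the orthogonal complements of $W_1$ and $W_2'$ in $u^\perp$ with $V_1^\perp$ and $\sigma^{-1}(V_2^\perp)$. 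One small slip: with the paper's convention $\phi(x,y)=Q(x+y)-Q(x)-Q(y)$ (so $\phi(w,w)=2Q(w)$), the reflection must be $\tau_w(x)=x-\frac{\phi(x,w)}{Q(w)}\,w$; as you have written it, $\tau_w(w)=0$ rather than $-w$, and $\tau_{u-v}(u)=\tfrac12(u+v)$ rather than $v$. Also, in the base case you should take $v=Tu$ specifically, so that $Q(u)=Q(v)$ is forced by the isometry rather than by regularity.

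The one substantive error is your closing remark that characteristic $2$ is irrelevant to this paper's applications. It is not: Theorem~\ref{Witt} is invoked over $\F_p$ for \emph{every} prime $p$, including $p=2$ --- in the proof of Lemma~\ref{henselcor}, and to show that the invariants $l,m,n$ of Definition~\ref{def:lmn} are well defined --- and the whole point of the paper's main theorem is that the formulae hold uniformly at $p=2$. So the characteristic $2$ case cannot be waved away; it genuinely requires the Eichler-transformation (or Arf-invariant) argument, which is why the paper cites Grove rather than Cassels for that case. Your proof as written therefore covers only part of what the paper needs, and the remainder must still rest on the citation.
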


\begin{proof}
  See \cite[pp. 89--92]{G} for quadratic forms over a field of
  characteristic not $2$, and \cite[p. 118]{G} for the case of
  characteristic $2$.
\end{proof}

Let $p$ be a prime. We write $\F_p$ for the finite field with $p$
elements, $\Q_p$ for the field of $p$-adic numbers, and $\Z_p$ for the
ring of $p$-adic integers. The following argument is one we will
revisit in the proof of Lemma~\ref{deltarelations}.
\begin{lemma}\label{henselcor}
  Let $Q$ be a quadratic form with coefficients in $\Z_p$ that
  reduces over $\F_p$ to a form that is both $k$-isotropic and regular.
  Then $Q$ is $k$-isotropic (and regular) over $\Q_p$.
\end{lemma}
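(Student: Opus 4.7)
The plan is to proceed by induction on $k$, using Hensel's lemma to lift hyperbolic planes one at a time from $\F_p$ to $\Q_p$. First I dispose of regularity: suppose the bilinear form $\phi$ associated to $Q$ were degenerate over $\Q_p$. Then after clearing denominators some primitive $v \in \Z_p^n$ would satisfy $\phi(v, w) = 0$ for every $w \in \Z_p^n$, so $\bar v$ would lie in the $\phi$-radical of $\bar Q$. The identity $\phi(v, v) = 2 Q(v)$ then forces $Q(v) = 0$ and hence $\bar Q(\bar v) = 0$, which is the extra condition needed in characteristic $2$; so $\bar v$ lies in the radical of $\bar Q$, contradicting the regularity of $\bar Q$. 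Hence $Q$ is regular over $\Q_p$.

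The base case $k = 0$ of the induction is vacuous. For the inductive step I apply Lemma~\ref{isohyp} to $\bar Q$: being regular and isotropic, it contains a hyperbolic plane, spanned by vectors $\bar e, \bar f \in \F_p^n$ with $\bar Q(\bar e) = \bar Q(\bar f) = 0$ and $\phi(\bar e, \bar f) = 1$. I choose lifts $e_0, f_0 \in \Z_p^n$ and use Hensel's lemma to perturb $e_0$ into an isotropic vector for $Q$: dividing $Q(e_0 + p t f_0)$ by $p$ gives a polynomial in $t \in \Z_p$ whose derivative at $t = 0$ equals the unit $\phi(e_0, f_0) \equiv 1 \pmod{p}$, so a root exists in $\Z_p$. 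An analogous adjustment of $f_0$, followed by a unit rescaling, produces $e, f \in \Z_p^n$ lifting $\bar e, \bar f$ with $Q(e) = Q(f) = 0$ and $\phi(e, f) = 1$.

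Next I split $\Z_p^n = \Z_p e \oplus \Z_p f \oplus W$, where $W = \{v : \phi(v, e) = \phi(v, f) = 0\}$ is the image of the explicit projection $v \mapsto v - \phi(v, f)\, e - \phi(v, e)\, f$. Then $W$ is a free $\Z_p$-module of rank $n - 2$, and $Q$ restricts to a quadratic form on $W$ whose mod-$p$ reduction is precisely the $\phi$-orthogonal complement of the hyperbolic plane $\langle \bar e, \bar f \rangle$ inside $\bar Q$. By Witt's Cancellation Theorem (Theorem~\ref{Witt}) this reduction is again regular, and since $\bar Q$ has Witt index at least $k$ the complement has Witt index at least $k - 1$, hence is $(k-1)$-isotropic. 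The inductive hypothesis then produces a $(k-1)$-dimensional totally isotropic subspace inside $W \otimes_{\Z_p} \Q_p$, and combining this with $\Q_p \cdot e$ yields the required $k$-dimensional totally isotropic subspace of $\Q_p^n$. I expect the main obstacle to be executing the Hensel lifting cleanly (uniformly across $p = 2$ and odd $p$) and verifying that reducing the orthogonal complement mod $p$ really does give the $\F_p$-orthogonal complement of the reduction, so that Witt cancellation and the inductive hypothesis apply as claimed.
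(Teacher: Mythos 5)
Your proof is correct and follows essentially the same route as the paper's: induct on $k$, use Hensel's lemma (via Lemma~\ref{isohyp}) to lift a hyperbolic plane from $\overline{Q}$ to a $\Z_p$-split summand $x_1x_2$, and apply Witt cancellation to the mod-$p$ reduction of the orthogonal complement before invoking the inductive hypothesis. You merely make explicit the details (the Hensel computation, the orthogonal splitting over $\Z_p$, and the regularity of $Q$ over $\Q_p$) that the paper compresses into ``we may assume via a $\Z_p$-equivalence that $Q = x_1x_2 + Q'$''.
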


\begin{proof}
  The case $k=1$ is a consequence of Hensel's lemma. In fact if we go
  via Lemma~\ref{isohyp} then we only need Hensel's lemma for a
  quadratic polynomial in one variable. For general $k > 1$ we proceed
  by induction on $k$. Once we know that $Q$ is isotropic, we may
  assume via a $\Z_p$-equivalence that
  \[ Q(x_1, \ldots, x_n) = x_1 x_2 + Q'(x_3, \ldots, x_n) \] for some
  quadratic form $Q'$ with coefficients in $\Z_p$. The reduction of
  $Q'$ mod $p$ is then $(k-1)$-isotropic (and regular) over $\F_p$ by
  Theorem~\ref{Witt}, and the induction hypothesis applies.
\end{proof}

\begin{theorem}\label{CW+M}
  \begin{enumerate}
  \item A quadratic form over $\F_p$ in $3$ or more variables
    is always isotropic.
  \item A quadratic form over $\Q_p$ in $5$ or more variables
    is always isotropic.
  \end{enumerate}
\end{theorem}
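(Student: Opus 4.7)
I would treat the two parts separately.

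For part (i), the quickest argument is Chevalley--Warning: for any polynomial $f \in \F_p[x_1,\ldots,x_n]$ of total degree $d$ with $n > d$, the number of zeros of $f$ in $\F_p^n$ is divisible by $p$. Applying this to $f = Q$ with $\deg Q = 2 < 3 \myleq n$, together with the fact that $0$ is always a zero of $Q$, forces there to be at least $p \mygeq 2$ zeros, and hence a non-trivial one. For $p$ odd one could equally well diagonalise and observe that any regular binary form $a_1 x_1^2 + a_2 x_2^2$ represents every element of $\F_p$, by the pigeon-hole argument on the two subsets $\{a_1 x^2 : x \in \F_p\}$ and $\{c - a_2 y^2 : y \in \F_p\}$ of $\F_p$, each of which has cardinality $(p+1)/2$; in particular it represents $-a_3$, giving a zero of $a_1 x_1^2 + a_2 x_2^2 + a_3 x_3^2$.

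For part (ii), the plan is to reduce to part (i) using Lemma~\ref{henselcor}. Restricting $Q$ to any $5$-dimensional subspace, it suffices to treat $n = 5$. For $p$ odd, I would diagonalise $Q$ as $\sum_{i=1}^{5} a_i x_i^2$, scale each variable so that $a_i \in \Z_p$, and then scale $Q$ itself so that the $a_i$ are not all in $p\Z_p$. Reorder to list the unit coefficients first: $a_1,\ldots,a_s$ are units and $a_{s+1} = p b_{s+1}, \ldots, a_5 = p b_5$ with $b_j$ units. If $s \myleq 2$, the substitution $x_i \mapsto p x_i$ for $1 \myleq i \myleq s$ followed by division of $Q$ by $p$ produces an isotropy-equivalent $\Z_p$-integral form in which the roles of units and $p$-multiples among the diagonal coefficients have been swapped, giving $5 - s \mygeq 3$ unit coefficients; so without loss of generality $s \mygeq 3$. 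The restriction of $Q$ to the first $s$ variables is then an $s$-variable $\Z_p$-form whose reduction mod $p$ is regular (all coefficients units) and isotropic (by part (i) applied to $s \mygeq 3$ variables over $\F_p$). Lemma~\ref{henselcor} lifts this to a non-zero zero of $Q$ in $\Z_p^s \subset \Z_p^5$.

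For $p = 2$ the diagonalisation step can fail, and the $\Z_2$-normal form must allow $2\times 2$ blocks of shape $x_1 x_2$ (which is already a hyperbolic plane, hence isotropic) and $x_1^2 + x_1 x_2 + x_2^2$ (the unique anisotropic binary form over $\F_2$). The same scaling-and-lifting strategy applies, but now one must examine case by case the shape of the reduction mod $2$ to check that Lemma~\ref{henselcor} or a direct appeal to Hensel's lemma produces the required isotropic lift; this verification is where I expect the main obstacle to lie.
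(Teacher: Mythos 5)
The paper itself does not prove this theorem: part (i) is attributed to Chevalley--Warning with a reference to Serre, and part (ii) is quoted as Meyer's theorem with a reference to Cassels. Your proposal is therefore a genuinely different, self-contained route, and for part (i) it is complete and correct by either of your two arguments. For part (ii) with $p$ odd, your reduction is essentially the standard proof and works, subject to one step you should make explicit: after diagonalising and clearing denominators you need every $a_i$ to be a unit times $p^{e_i}$ with $e_i \in \{0,1\}$, which requires absorbing the even part of $v_p(a_i)$ into the variable $x_i$; merely arranging ``$a_i \in \Z_p$, not all in $p\Z_p$'' does not exclude, say, $a_2 = p^3 u$. With that normalisation the swap between the $s$ unit coefficients and the $5-s$ others forces $s \mygeq 3$, and part (i) together with Lemma~\ref{henselcor} (only the $k=1$ case, i.e.\ Hensel's lemma) produces a nontrivial zero. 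What your approach buys is independence from the classification of forms over $\Q_p$; what the paper's citation buys is uniform coverage of $p=2$.

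The genuine gap is the case $p=2$ of part (ii), which you flag but do not close, and it cannot be omitted: the paper uses (ii) for \emph{all} primes, e.g.\ in Lemma~\ref{rho-P} to bound the anisotropic part by $m \myleq 4$ and conclude $\rho_p(k,n)=1$ for $n \mygeq 2k+3$, and the rationality-in-$p$ argument of Section~\ref{sec:classification} only transfers identities between rational functions, not this input. Moreover the obstruction at $p=2$ is worse than a ``verification'': the reduction mod $2$ of a diagonal form $\sum u_i x_i^2$ with unit coefficients is $\left(\sum x_i\right)^2$, which has radical of codimension $1$, so Lemma~\ref{henselcor} never applies to a diagonal form over $\Z_2$, and even after passing to the block normal form the mod-$2$ reduction need not be regular enough to lift. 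The standard way to finish is to invoke the uniqueness (up to equivalence) of the rank-$4$ anisotropic form over $\Q_p$, or equivalently the Hilbert-symbol classification --- which is exactly Meyer's theorem as cited in the paper. So your outline is a valid alternative proof for $p$ odd, but for $p=2$ you would in effect have to reprove the cited result.
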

\begin{proof}
  (i) This is a consequence of the Chevalley-Warning theorem. See
  for example~\cite[p. 5]{S}. \\
  (ii) This is Meyer's theorem. See for example
  \cite[p. 41]{Cassels-RQF}.
\end{proof}

We make the following definition concerning quadratic forms over $\F_p$.
\begin{definition}
  \label{def:lmn}
  A quadratic form over $\F_p$ belongs to the class $[l,m,n]$ if it is
  equivalent to a form
\begin{equation}\label{modpform}
    Q(x_1,x_2,\ldots,x_n) = \sum_{i=1}^l x_{r+2i-1}x_{r+2i} + f(x_{r+2l+1},\ldots,x_n)
\end{equation}
where $f$ is a regular anisotropic form of dimension $m$. Note that
$l$ is the number of hyperbolic planes in the orthogonal
decomposition, $m \in \{0,1,2\}$ by Theorem~\ref{CW+M}(i), $n$ is the
dimension of the form, and $r = n - 2l - m$ is the dimension of the
radical.
\end{definition}

Repeated application of Lemma~\ref{isohyp} shows that every quadratic
form over $\F_p$ belongs to the class $[l,m,n]$ for some $l, m, n$,
and Theorem~\ref{Witt} shows that $l, m, n$ are uniquely determined by
the quadratic form.

\section{Global densities}
\label{sec:global}

In this section we explain how Theorem~\ref{thm:glob} and
Corollary~\ref{cor:prod} follow from Theorem~\ref{thm:main}.  First we
read off from Theorem~\ref{thm:main} the asymptotic behaviour of
$\rho_p(k,n)$ as $p \to \infty$.

\begin{corollary}\label{asymptotic}
Let $k \mygeq 1$. As $p \to \infty$, we have the following
approximations.
\begin{equation*} \rho_p(k,n) %\sim 
    = \left\{ \begin{array}{ll} \vspace{1ex}
     \frac{1}{2} + O(\frac{1}{p}) & \text{ if } n=2k;\\ \vspace{1ex}
     1 - \frac{1}{2p} + O(\frac{1}{p^2}) & \text{ if } n=2k+1;\\ 
     1 - \frac{1}{4p^3} + O(\frac{1}{p^4}) & \text{ if } n=2k+2.
\end{array} \right.
\end{equation*}
\end{corollary}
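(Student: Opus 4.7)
The plan is to substitute $q = 1/p$ into the three formulas of Theorem~\ref{thm:main} and expand each factor as a Laurent series in $q$, retaining only enough terms to read off the stated approximations. Setting $P_j = \prod_{i=1}^{j}\frac{p^{2i-1}-1}{p^{2i}-1}$, the key identity is
\[
P_j = \frac{q^j}{1+q}\prod_{i=2}^{j}\frac{1-q^{2i-1}}{1-q^{2i}},
\]
where each factor in the remaining product is $1+O(q^{2i-1})$, so the whole product equals $1+O(q^3)$, the leading correction coming from the $i=2$ factor. In particular, $(p^{j}+1)P_{j} = \frac{1+q^{j}}{1+q}\bigl(1+O(q^3)\bigr)$.

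The two easier cases follow immediately. For $n=2k+1$ one has $(p^{k+1}+1)P_{k+1} = \frac{1+q^{k+1}}{1+q}(1+O(q^3)) = 1 - q + O(q^2)$, and halving and adding $\tfrac{1}{2}$ yields $\rho_p(k,2k+1) = 1 - \tfrac{1}{2p} + O(p^{-2})$. For $n=2k$ the first bracketed term rewrites as $\frac{p^{k+2}-1}{(p+1)(p^{2k+1}-1)} = \frac{q^{k}(1-q^{k+2})}{(1+q)(1-q^{2k+1})}$; multiplying by $(p^k+1)$ gives $\frac{1}{1+q}+O(q)$, as does $(p^k+1)P_k$, so the bracketed sum equals $\frac{2}{1+q}+O(q)= 2+O(q)$ and therefore $\rho_p(k,2k) = \tfrac{1}{2}+O(p^{-1})$.

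The substantive case is $n=2k+2$, where the two bracketed terms are subtracted and the cancellation has to be extracted. Writing
\[
A = \frac{p^{k+3}-1}{(p+1)(p^{2k+3}-1)} = \frac{q^{k+1}(1-q^{k+3})}{(1+q)(1-q^{2k+3})},\quad B = P_{k+1} = \frac{q^{k+1}}{1+q}\prod_{i=2}^{k+1}\frac{1-q^{2i-1}}{1-q^{2i}},
\]
I would factor out the common prefactor $\frac{q^{k+1}}{1+q}$ so that
\[
A - B = \frac{q^{k+1}}{1+q}\left[\frac{1-q^{k+3}}{1-q^{2k+3}} - \prod_{i=2}^{k+1}\frac{1-q^{2i-1}}{1-q^{2i}}\right].
\]
The first bracketed piece is $1+O(q^{k+3}) = 1+O(q^4)$ while the second is $1-q^3+O(q^4)$, so their difference is $q^3+O(q^4)$. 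Hence $A - B = q^{k+4}\bigl(1+O(q)\bigr)$, and multiplying by $p^{k+1}+1$ gives $(p^{k+1}+1)(A-B) = q^3+O(q^4)$, which finally yields $\rho_p(k,2k+2) = 1 - \tfrac{1}{4p^3}+O(p^{-4})$.

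The main obstacle, which is really a matter of bookkeeping rather than any genuine difficulty, is in this last case: both pieces of the bracket must be expanded to a common order of $q^3$ so that the leading cancellation becomes visible, which in turn requires keeping the $i=2$ factor of the product explicit rather than absorbing it into an error term.
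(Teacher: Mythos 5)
Your proposal is correct and follows essentially the same route as the paper: expand the explicit formulae of Theorem~\ref{thm:main} as series in $1/p$, with the only delicate point being the $n=2k+2$ case, where both bracketed terms must be expanded to order $p^{-3}$ beyond their common leading factor so that the cancellation leaves $q^3+O(q^4)$. Your bookkeeping (factoring out $q^{k+1}/(1+q)$ and isolating the $i=2$ factor of the product) reproduces exactly the two expansions the paper records, so there is nothing to add.
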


\begin{proof}
In the case $n = 2k+2$ we consider the Taylor series expansions
\begin{equation*}
  \frac{p^{k+3}-1}{(p+1)(p^{2k+3}-1)} =\frac{1}{p^{k+1}}
  \left(1 - \frac{1}{p} + \frac{1}{p^2} - \frac{1}{p^3}
  + O\left(\frac{1}{p^4}\right)\right),
\end{equation*}
and
\begin{equation*}
  \prod_{i=1}^{k+1} \frac{p^{2i-1}-1}{p^{2i}-1}
  = \frac{1}{p^{k+1}}\left(1 - \frac{1}{p} + \frac{1}{p^2} - \frac{2}{p^3}
  + O\left(\frac{1}{p^4}\right)\right),
\end{equation*}
using the big $O$ notation. Substituting these expansions into the
formula for $\rho_p(k,n)$ in Theorem~\ref{thm:main} gives the
approximation for $\rho_p(k,n)$ as claimed.  The other cases are
similar but easier. \end{proof}

Fix values of $k$ and $n$.  We write $U_\infty$ for the set of
quadratic forms in $\mathbb{R}[x_1,\ldots,x_n]$ that are {\em not}
$k$-isotropic over $\mathbb{R}$.  Likewise we write $U_p$ for the set
of quadratic forms in $\Z_p[x_1,\ldots,x_n]$ that are {\em not}
$k$-isotropic over $\Q_p$.  Let $\mu_\infty$ denote the standard
Lebesgue measure on $\mathbb{R}^d$, and let $\mu_p$ denote the Haar
measure on $\Z_p^d$ normalised to have total volume $1$.

\begin{lemma}
\label{lem:prod}
Let $1 \myleq k \myleq n$ and $d = \binom{n+1}{2}$.  Suppose that the
following condition holds for all sufficiently large primes $p$:
\begin{quotation}
  \noindent
  Every quadratic form in $\Z_p[x_1,\ldots,x_n]$ whose reduction mod
  $p$ has rank at least $n-1$ is $k$-isotropic over $\Q_p$.
\end{quotation}
Then $\rho_{\rm glob}(k,n)$ exists and is given by
\begin{equation}
  \label{eqn:prod} \rho_{\rm glob}(k,n) =
  \frac{\mu_\infty( [-1,1]^d \setminus U_\infty) }{2^d}
  \cdot\prod_p ( 1 - \mu_p(U_p)).
\end{equation}
\end{lemma}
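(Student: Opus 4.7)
The plan is to combine the Strong Hasse Principle and Witt's Cancellation Theorem (as in the discussion preceding Theorem~\ref{thm:glob}) with the main theorem of Poonen and Stoll~\cite{PS}. By the Hasse principle, a quadratic form $Q \in \Z[x_1,\ldots,x_n]$ is $k$-isotropic over $\Q$ if and only if $Q \notin U_\infty$ and $Q \notin U_p$ for every prime $p$. Hence $\rho_{\rm glob}(k,n)$, if it exists, is the density of integer points that simultaneously miss $U_\infty$ and every $U_p$, and Poonen-Stoll provides the tool for converting such local data into a product formula of the shape~\eqref{eqn:prod}, provided the local conditions are measurable and satisfy a suitable summability hypothesis.

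The key measure-theoretic input is supplied by the hypothesis of the lemma: it implies that, for all sufficiently large primes $p$,
\begin{equation*}
U_p \subseteq S_p := \{ Q \in \Z_p^d : \operatorname{rank}(\overline{Q}) \myleq n-2 \},
\end{equation*}
where $\overline{Q}$ denotes the reduction of $Q$ modulo $p$. The condition that a symmetric $n \times n$ matrix have rank at most $n-2$ is cut out by the vanishing of all $(n-1)\times(n-1)$ minors and is well known to define a subvariety of codimension $3$ in the space of symmetric matrices; the standard point count over $\F_p$ then gives $\mu_p(S_p) = O(p^{-3})$. Consequently $\mu_p(U_p) = O(p^{-3})$, which yields both the summability $\sum_p \mu_p(U_p) < \infty$ and, together with the observation that $\partial U_p \subseteq S_p$ for large $p$, the vanishing $\mu_p(\partial U_p) = 0$. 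The set $U_\infty$ is semialgebraic, so $\mu_\infty(\partial U_\infty) = 0$ as well.

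The final step is to feed these ingredients into the Poonen-Stoll theorem applied to the complements of the $U_v$; the existence of $\rho_{\rm glob}(k,n)$ and the identity~\eqref{eqn:prod} follow at once. The main obstacle I anticipate is verifying $\mu_p(\partial U_p) = 0$ for the finitely many small primes not covered by the hypothesis: here one argues that $U_p$ is a union of $\Q_p$-equivalence classes of integral forms and that its boundary is contained in the measure-zero closed locus where the discriminant vanishes. With this in hand the conclusion is immediate from~\cite{PS}.
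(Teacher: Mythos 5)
Your overall strategy is the paper's: reduce to local conditions via the Hasse principle and Witt cancellation, note that the hypothesis forces $U_p$ into the locus where the reduction mod $p$ has rank at most $n-2$, and then invoke Poonen--Stoll. But there is a genuine gap in how you verify the hypotheses of the Poonen--Stoll product formula. The relevant result, \cite[Lemma 21]{PS}, does not take ``$\sum_p \mu_p(U_p) < \infty$'' as its hypothesis; it requires that the \emph{upper density of integer points} lying in $\bigcup_{p \mygeq M} U_p$ tends to $0$ as $M \to \infty$. Summability of the local measures does not imply this: for an integer point $a$ of height about $H$, the primes $p > H$ are not controlled by $\mu_p(U_p)$ at all (the box $[-H,H]^d$ meets only a sparse set of residues modulo such $p$), and a priori many points could lie in some $U_p$ with $p$ enormous. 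The way Poonen and Stoll handle this, and the way the paper does, is \cite[Lemma 20]{PS}: one exhibits two \emph{coprime} polynomials $f,g \in \Z[a_{11},\ldots,a_{nn}]$ --- here, two distinct $(n-1)\times(n-1)$ minors of the generic symmetric coefficient matrix --- with $U_p$ contained in the set where $p$ divides both $f$ and $g$ for all large $p$; the resultant of $f$ and $g$ then bounds the primes that can divide both values at a given integer point. Your containment $U_p \subseteq S_p$ already gives this, since $S_p$ lies in the common vanishing locus of any two such minors, but you must route the argument through Lemma 20 rather than through the codimension-$3$ point count, which only yields the (insufficient) measure bound.

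A second, smaller, slip: from $\partial U_p \subseteq S_p$ and $\mu_p(S_p) = O(p^{-3})$ you conclude $\mu_p(\partial U_p) = 0$, but a bound of size $O(p^{-3})$ for a fixed prime is not zero. The correct argument is the one you sketch for the small primes --- $k$-isotropy over $\Q_p$ of a regular form depends only on its $\Q_p$-equivalence class, which is locally constant off the discriminant locus, so $\partial U_p$ is contained in that measure-zero closed set --- and this works uniformly for all $p$, large or small. With these two repairs (cite Lemma 20 with two coprime minors, and use the discriminant-locus argument for every $p$), your proof matches the paper's.
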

\begin{proof}
  As noted in the introduction, a quadratic form is $k$-isotropic over
  $\Q$ if and only if it is $k$-isotropic over $\Q_p$ for all primes
  $p$ and over $\mathbb{R}$.
  We then apply~\cite[Lemmas 20 and 21]{PS} % (pp. 1134--1135)
  with $U_\infty$ and $U_p$ as defined above, $S = \emptyset$ and
  $f,g \in \Z[a_{11},a_{12} ,\ldots,a_{nn}]$ two distinct
  $(n -1) \times (n-1)$ minors of the generic $n \times n$ symmetric
  matrix of coefficients.
\end{proof}

In our earlier notation the factors on the right hand side
of~\eqref{eqn:prod} may be written
\begin{equation}
  \label{rho_v}
  \rho_\infty(k,n) = \frac{\mu_\infty( [-1,1]^d \setminus U_\infty) }{2^d}
  \qquad \text{ and } \qquad \rho_p(k,n) = 1 - \mu_p(U_p).
  \end{equation}

\begin{lemma}
  \label{lem:admiss}
  If $n \mygeq 2k + 2$ then the condition in Lemma~\ref{lem:prod} is
  satisfied.
\end{lemma}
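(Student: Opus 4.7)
The plan is to divide into two cases according to whether the reduction $\bar{Q}$ of $Q$ modulo $p$ has rank exactly $n$ or exactly $n - 1$, and in each case peel off a regular summand of dimension at least $2k+1$ to which Lemma~\ref{henselcor} can be applied. Since the hypothesis only needs to hold for sufficiently large primes, I restrict attention to $p$ odd, where every quadratic form with coefficients in $\Z_p$ is $\Z_p$-equivalent to a diagonal form $\sum_{i=1}^n a_i x_i^2$, and the rank of $\bar Q$ equals the number of $a_i$ lying in $\Z_p^\times$. Permuting variables so that $a_1, \ldots, a_r \in \Z_p^\times$, we obtain $Q \sim Q_0 + Q_1$, where $Q_0 = \sum_{i=1}^r a_i x_i^2$ is regular modulo $p$ of dimension $r \mygeq n-1 \mygeq 2k+1$ and every coefficient of $Q_1$ lies in $p\Z_p$ (possibly $Q_1 = 0$).

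It then suffices to prove that $Q_0$ is $k$-isotropic over $\Q_p$. By Definition~\ref{def:lmn} together with Theorem~\ref{CW+M}(i), $\bar Q_0$ belongs to a class $[l,m,r]$ with $2l+m=r$ and $m \in \{0,1,2\}$, and a quick case check on the parity of $r$ shows $l \mygeq k$ in every case: if $r = 2k+1$ then necessarily $(l,m)=(k,1)$; if $r$ is even with $r \mygeq 2k+2$ then $(l,m) \in \{(r/2,0),\,(r/2-1,2)\}$; and if $r$ is odd with $r \mygeq 2k+3$ then $l=(r-1)/2 \mygeq k+1$. Hence $\bar Q_0$ is regular and $k$-isotropic over $\F_p$, and Lemma~\ref{henselcor} lifts this to $k$-isotropy of $Q_0$ over $\Q_p$. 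Since $Q_0$ is a direct summand of $Q$, the form $Q$ is itself $k$-isotropic over $\Q_p$.

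The argument is essentially bookkeeping on top of Lemma~\ref{henselcor}, so no serious obstacle arises. The point demanding the most care is the boundary case $r = 2k+1$: this is the unique value of $r$ for which the Witt index of a regular form of dimension $r$ over $\F_p$ is forced to equal exactly $k$, whereas for larger $r$ the minimum Witt index has a full unit of slack. The restriction to odd primes is a convenience rather than a genuine loss, since the lemma only asserts the condition for $p$ sufficiently large.
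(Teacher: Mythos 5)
Your proof is correct and follows essentially the same route as the paper: deduce from $2l+m \mygeq n-1 \mygeq 2k+1$ that the reduction mod $p$ contains at least $k$ hyperbolic planes, then lift to $k$-isotropy over $\Q_p$ via Lemma~\ref{henselcor}. The paper neither restricts to odd $p$ nor diagonalises (it applies Lemma~\ref{henselcor} directly to $Q$ using the class $[l,m,n]$ of its reduction, radical and all), but your extra step of first splitting off a regular summand $Q_0$ is harmless and in fact satisfies the regularity hypothesis of Lemma~\ref{henselcor} more scrupulously than the paper's one-line application does.
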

\begin{proof} Let $Q \in \Z_p[x_1,\ldots,x_n]$ be a quadratic form
  whose reduction mod $p$ has rank at least $n-1$.  In the terminology
  of Definition~\ref{def:lmn}, the reduction of $Q$ mod $p$ belongs to
  the class $[l,m,n]$ for some $l,m,n$ with $m \in \{0,1,2\}$.  Our
  assumptions then give $2l+m \mygeq n - 1 \mygeq 2 k + 1$. Since $k$
  and $l$ are integers it follows that $l \mygeq k$. Then $Q$ is
  $k$-isotropic over $\Q_p$ by Lemma~\ref{henselcor}.
\end{proof}  

\begin{proofof}{Theorem~\ref{thm:glob}}
Let $\rho_{\rm glob}(k,n)$ be as defined in~\eqref{def:global},
and let $\overline{\rho}_{\rm glob}(k,n)$ be the same quantity with
the limit replaced by lim sup.
  We write $\rho_p(k,n)$ for the probabilities computed in
  Theorem~\ref{thm:main}. A standard argument (see for example
  \cite[Proposition 3.2]{CS}) % ,  p. 12)
  uses the local conditions at finitely many primes to show that
  \begin{equation}
    \label{finite-prod}
    \overline{\rho}_{\rm glob}(k,n) \myleq \prod_{p < M} \rho_p(k,n).
  \end{equation}

  If $n \myleq 2k+1$ then by Corollary \ref{asymptotic} the right hand
  side of~\eqref{finite-prod} tends to $0$ as $M \to
  \infty$. Therefore $\rho_{\rm glob}(k,n) = 0$ and the equality
  claimed in Theorem~\ref{thm:glob} holds since both sides are zero.

  If $n \mygeq 2k + 2$ then Lemmas~\ref{lem:prod} and~\ref{lem:admiss}
  apply.  Combining~\eqref{eqn:prod} and~\eqref{rho_v} gives
  \[ \rho_{\rm glob}(k,n) =  \rho_\infty(k,n) \prod_p \rho_p(k,n) \]
  as required. 
\end{proofof}

Corollary~\ref{cor:prod} follows immediately from
Theorem~\ref{thm:glob}, Theorem~\ref{thm:main} and the observation in
the last proof that the local product is zero for $n \myleq 2k+1$.

\begin{remark} We do not know an accurate method for computing the
  probabilities $\rho_\infty(k,n)$, but we can estimate them using a
  Monte Carlo simulation.  On this basis we record the following
  numerical values that are likely to be accurate to the number of
  decimal places recorded.
  \[ \begin{array}{c|ccc}
k & \prod_p \rho_p(k,2k+2) & \rho_\infty(k,2k+2) & \rho_{\rm glob}(k,2k+2) \\
       \hline
%       1 &  0.9874  &  0.9823  &  0.970  \\
%       2 &  0.9823  &  0.9705  &  0.953  \\
%       3 &  0.9801  &  0.9623  &  0.943
1 & 0.98743625 & 0.9823 & 0.9699 \\
2 & 0.98229463 & 0.9705 & 0.9533 \\
3 & 0.98007620 & 0.9623 & 0.9431 \\           
4 & 0.97906880 & 0.9561 & 0.9361 \\
5 & 0.97859528 & 0.9512 & 0.9309 
  \end{array} \]
\end{remark}

\begin{remark} In~\cite{BCFJK} (which only treats the case $k=1$) some
  alternatives to the definition~\eqref{def:global} were
  considered. The global densities so defined may still be computed as
  a product over all places, and the local contributions at the finite
  places are the same as before.  However the local contributions at
  infinity can change, and for one natural choice of distribution
  these were computed exactly. It is possible that something similar
  could be done for $k > 1$, but we did not pursue this.
\end{remark}
  
\section{Counting quadratic forms over $\F_{\lowercase{p}}$}
\label{sec:count}

In this section we prove some formulae counting quadratic forms over
$\F_p$.  We consider quadratic forms over $\F_p$ according to their
class $[l,m,n]$ as defined in Definition~\ref{def:lmn}.
\begin{definition}
  \label{pidefs}
  Consider a quadratic form
  \[Q(x_1,x_2,\ldots,x_n) = \sum_{1\myleq i \myleq j \myleq n}a_{ij}x_ix_j\]
  over $\F_p$ where the coefficients $a_{ij}$ are
  chosen independently at random according to counting measure.
  
  Let $\pi_0(l,m,n)$ be the probability that $Q$ belongs to the class
  $[l,m,n]$.

  Let $\pi_1(l,m,n)$ be the probability that $Q$ belongs to the class
  $[l,m,n]$ given that $a_{11} \not= 0$.

  Let $\pi_2(l,m,n)$ be the probability that $Q$ belongs to the class
  $[l,m,n]$ given that $a_{11}x_1^2+a_{12}x_1x_2+a_{22}x_2^2$ is a
  regular anisotropic form.
\end{definition}

Note that Theorem~\ref{CW+M}(i) implies that $\pi_i(l,m,n)=0$ if
$m \mygeq 3$.

The group $\GL_n(\F_p)$ acts on the set of $n$-dimensional quadratic
forms over $\F_p$ by linear substitutions.  The class $[l,m,n]$ is the
union of either one or two orbits, and so its size may be computed as
a sum of orbit sizes.  To begin with we only consider forms that are
regular.  In these cases the orbit sizes can be computed using the
orbit-stabiliser theorem and the following theorem.

\begin{lemma}\label{snk}
  Let $Q_0$ be a quadratic form over $\F_p$ belonging to the class
  $[l,m,n]$. Suppose that $Q_0$ is regular, equivalently $n = 2l+m$.
  Then the stabiliser in $\GL_{n}(\F_p)$ of $Q_0$ is an orthogonal
  group of order $S(m,n)$ where
\begin{equation*}
\begin{aligned}
  S(0,2k) &= 2p^{k(k-1)}(p^k-1)\prod_{i=1}^{k-1}(p^{2i}-1);\\
  S(1,2k+1) &= \left\{ 
  \begin{array}{ll}
  2p^{k^2}\prod_{i=1}^{k}(p^{2i}-1) & \text{ if } p\neq 2;\\
  p^{k^2}\prod_{i=1}^{k}(p^{2i}-1) & \text{ if } p=2;\\
  \end{array}
  \right.\\
  S(2,2k) &= 2p^{k(k-1)}(p^k+1)\prod_{i=1}^{k-1}(p^{2i}-1).
\end{aligned}
\end{equation*}
\end{lemma}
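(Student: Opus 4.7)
The plan is to compute $|O(Q_0)|$ for a fixed representative $Q_0$ of the class $[l,m,n]$ by induction on $l$ (with $n = 2l + m$). By Witt's Cancellation (Theorem~\ref{Witt}) the class $[l,m,n]$ is a single $\GL_n(\F_p)$-orbit, so $|O(Q_0)|$ depends only on $(m,n)$; in particular we may assume $Q_0$ comes from an orthogonal decomposition $V = H \oplus V''$ in which $H$ is a hyperbolic plane and $V''$ is regular of class $[l-1, m, n-2]$, whenever $l \mygeq 1$.

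The recursion comes from letting $O(Q_0)$ act on the set $\mathcal{P}$ of hyperbolic pairs $(v, w) \in V \times V$ (pairs with $Q(v) = Q(w) = 0$ and $\phi(v, w) = 1$). By Witt's Extension Theorem (a standard consequence of Theorem~\ref{Witt}) the action is transitive, and the stabiliser of any pair $(v, w)$ is $O(\langle v, w\rangle^\perp) = O(V'')$. Hence $S(m, n) = |\mathcal{P}| \cdot S(m, n-2)$. I then count $|\mathcal{P}|$ by choosing $v$ first, giving $N(V)$ choices, where $N(V)$ is the number of non-zero isotropic vectors of $V$. For each such $v$, the condition $\phi(v, w) = 1$ defines an affine hyperplane; on each coset $w_0 + \F_p v$ within it, the identity
\[
Q(w_0 + \lambda v) \;=\; Q(w_0) + \lambda \phi(v, w_0) + \lambda^2 Q(v) \;=\; Q(w_0) + \lambda
\]
(using $Q(v) = 0$, and valid in every characteristic) shows that exactly one of the $p$ points on the coset is isotropic. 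Thus $|\mathcal{P}| = N(V) \cdot p^{n-2}$. The three required values of $N(V)$ (namely $(p^k-1)(p^{k-1}+1)$, $p^{2k}-1$, and $(p^k+1)(p^{k-1}-1)$ for the three regular classes) follow from a short direct computation, e.g.\ by summing $|Q^{-1}(c)|$ over decompositions $c = c_1 + c_2$ across $H^l$ and the anisotropic piece. Substituting these into the recursion and tracking powers of $p$ then yields the formulae for $S(m,n)$ by routine verification.

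The induction starts at $l = 0$: for $[0, 0, 0]$ the stabiliser is trivial; for $[0, 1, 1]$ the stabiliser of $ax^2$ is $\{T \in \F_p^\times : T^2 = 1\}$, of order $2$ when $p$ is odd and $1$ when $p = 2$; for $[0, 2, 2]$ the form is the norm map of $\F_{p^2}/\F_p$, whose orthogonal group is dihedral of order $2(p+1)$, generated by multiplication by norm-one elements of $\F_{p^2}^\times$ together with Frobenius. The main obstacle will be the characteristic-two case: one has to check that Witt's Extension Theorem and the hyperbolic-completion argument both still apply when the form is only Arf-regular, so that the associated bilinear form $\phi$ may be degenerate (as it is in odd dimension when $p = 2$). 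These points are covered by the references cited in Theorem~\ref{Witt}; the only place where the formula genuinely differs between $p = 2$ and odd $p$ is the base case $S(1, 1)$, which is handled by direct calculation.
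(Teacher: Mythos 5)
Your argument is correct in substance, but it is a genuinely different route from the paper's: the paper proves Lemma~\ref{snk} simply by citing Grove \cite{G} for the orders of the finite orthogonal groups, whereas you give a self-contained proof by induction on the Witt index, using transitivity of the orthogonal group on hyperbolic pairs, the count $|\mathcal{P}| = N(V)\,p^{n-2}$, and direct computation of the base cases $[0,0,0]$, $[0,1,1]$, $[0,2,2]$. The recursion checks out: the ratios $S(m,n)/S(m,n-2)$ equal $p^{n-2}$ times $(p^k-1)(p^{k-1}+1)$, $p^{2k}-1$, $(p^k+1)(p^{k-1}-1)$ in the three cases, your values of $N(V)$ are the standard isotropic-vector counts, and your coset argument $Q(w_0+\lambda v)=Q(w_0)+\lambda$ is valid in every characteristic. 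What your approach buys is uniformity: the only place $p=2$ enters is the base case $S(1,1)$, and the deformation argument on cosets of $\F_p v$ sidesteps diagonalisation entirely. What it costs is that the whole weight of the characteristic-two case is shifted onto Witt's extension theorem for quadratic (rather than bilinear) forms, including the odd-dimensional case where the polar form $\phi$ is degenerate; you are right that this is the delicate point, and right that Grove's treatment covers it, but as written your proof still ultimately leans on the same reference the paper cites, just for a more structural statement.

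One claim in your first paragraph is wrong as stated: the class $[l,m,n]$ is \emph{not} always a single $\GL_n(\F_p)$-orbit. For $m=1$ and $p$ odd it is the union of two orbits (anisotropic part $x^2$ versus $ux^2$ with $u$ a non-square), as the paper itself notes just before Lemma~\ref{pi0}; Witt cancellation gives that the anisotropic parts have the same dimension, not that they are isometric. This does not damage your proof, because the two forms differ by the unit scaling $Q \mapsto uQ$, which does not change the orthogonal group, so the stabiliser order still depends only on $(m,n)$ --- but that extra observation is needed and should be supplied in place of the single-orbit claim.
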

\begin{proof}
  See \cite[pp. 81--82]{G} for $p$ an odd prime,
  and \cite[pp. 147--150]{G} for the case $p=2$.
\end{proof}

To find the orbit size when the radical has dimension $r= n-2l-m$, we
multiply the orbit size of the regular part under the action of
$\GL_{n-r}(\F_p)$ by the number
\begin{equation*}
   N(r,n) = \prod_{i=0}^{r-1} \frac{p^n-p^i}{p^r-p^i}
\end{equation*}
of $r$-dimensional subspaces of $\F_p^n$. % an $n$-dimensional space.
The orbit size $O(l,m,n)$ of a form in $[l,m,n]$ under the action of
$\GL_n(\F_p)$ is therefore given by
\begin{equation*}
    O(l,m,n)= N(n-2l-m,n)\cdot\frac{|\GL_{2l+m}(\F_p)|}{S(m,2l+m)}.
\end{equation*}

If $m=1$ and $p$ is odd, there are two orbits belonging to $[l,m,n]$;
other values of $m$ give a unique orbit (see \cite[p. 79]{G}).  Hence,
using that $|\GL_n(\F_p)| = \prod_{i=0}^{n-1}(p^n-p^i)$, and dividing
by the total number of quadratic forms of dimension $n$, we obtain the
values of $\pi_0(l,m,n)$ recorded in the next lemma.  Note that the
only form in the class $[0,0,n]$ is the form where all the
coefficients are zero.

\begin{lemma}\label{pi0} For $l+m>0$ and $n = 2l + m + r$ we have
  \[
    \pi_0(l,m,n) = \left\{ \begin{array}{ll} \vspace{1ex}
 \frac{1}{p^{n(n+1)/2}}\cdot\prod_{i=0}^{r-1}\frac{p^{n}-p^i}{p^{r}-p^i}\cdot
 \frac{\prod_{i=0}^{2l-1}(p^{2l}-p^i)}{2p^{l(l-1)}(p^{l}-1)\prod_{i=1}^{l-1}(p^{2i}-1)}
 & \text{ if } m = 0; \\ \vspace{1ex}
 \frac{1}{p^{n(n+1)/2}}\cdot\prod_{i=0}^{r-1}\frac{p^{n}-p^i}{p^{r}-p^i}
 \cdot\frac{\prod_{i=0}^{2l}(p^{2l+1}-p^i)}{p^{l^2}\prod_{i=1}^{l}(p^{2i}-1)}
  & \text{ if } m = 1; \\
  \frac{1}{p^{n(n+1)/2}}\cdot\prod_{i=0}^{r-1}\frac{p^{n}-p^i}{p^{r}-p^i}
  \cdot \frac{\prod_{i=0}^{2l+1}(p^{2l+2}-p^i)}{2p^{l(l+1)}(p^{l+1}+1)
  \prod_{i=1}^{l}(p^{2i}-1)} & \text{ if } m = 2. \end{array} \right.              
\]
Moreover, $\pi_0(0,0,n) = 1/p^{n(n+1)/2}$.
\end{lemma}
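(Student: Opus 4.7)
The approach is to follow the orbit-by-orbit accounting already sketched in the paragraphs preceding the statement: for each class $[l,m,n]$ we enumerate the $\GL_n(\F_p)$-orbits making up the class, compute each orbit's size by orbit-stabiliser, sum these sizes, and divide by the total number $p^{n(n+1)/2}$ of quadratic forms in $n$ variables over $\F_p$.

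First I would fix a representative form $Q_0$ belonging to $[l,m,n]$ with radical of dimension $r = n - 2l - m$. Writing $\bar Q_0$ for the induced regular form on the quotient by the radical, I would argue that specifying an element in the orbit of $Q_0$ amounts to choosing (i) an $r$-dimensional subspace $R \subset \F_p^n$ (to serve as the radical), of which there are $N(r,n)$, and (ii) a regular form on $\F_p^n/R$ equivalent to $\bar Q_0$. Since the set of regular forms in the $\GL_{n-r}(\F_p)$-orbit of $\bar Q_0$ has size $|\GL_{2l+m}(\F_p)|/S(m,2l+m)$ by orbit-stabiliser together with Lemma~\ref{snk}, the orbit of $Q_0$ in $\GL_n(\F_p)$ has size
\[ O(l,m,n) \;=\; N(r,n)\cdot\frac{|\GL_{2l+m}(\F_p)|}{S(m,2l+m)}. \]
Plugging in $|\GL_{2l+m}(\F_p)| = \prod_{i=0}^{2l+m-1}(p^{2l+m}-p^i)$ and the formulas for $N(r,n)$ and $S(m,2l+m)$ yields the claimed expressions after division by $p^{n(n+1)/2}$.

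The main (small) obstacle is the case $m=1$: by \cite[p.~79]{G} there are two $\GL_n(\F_p)$-orbits in $[l,1,n]$ when $p$ is odd, and a single orbit when $p=2$. I would handle both at once by observing that $S(1,2l+1)$ from Lemma~\ref{snk} carries precisely the compensating factor of $2$ when $p$ is odd, so that summing the two equal orbit sizes for odd $p$ gives the same total $|\GL_{2l+1}(\F_p)|/(p^{l^2}\prod_{i=1}^{l}(p^{2i}-1))$ as the single orbit contribution for $p=2$. Thus the formula recorded for $m=1$ is uniform in $p$. For $m\in\{0,2\}$ there is a unique orbit and no such subtlety arises.

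Finally I would dispose of the degenerate case $(l,m)=(0,0)$ by direct inspection: the class $[0,0,n]$ contains only the identically zero form, whose probability is $1/p^{n(n+1)/2}$. This completes the proof. The whole argument is essentially bookkeeping; no step is genuinely difficult once Lemma~\ref{snk} and the orbit count $N(r,n)$ are in hand.
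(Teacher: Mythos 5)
Your proposal is correct and takes essentially the same route as the paper: Lemma~\ref{pi0} is derived there from exactly this orbit-stabiliser computation (the factor $N(r,n)$ for the choice of radical, the stabiliser orders from Lemma~\ref{snk}, and the observation that the two orbits for $m=1$, $p$ odd are compensated by the factor $2$ in $S(1,2k+1)$, so the formula is uniform in $p$). No gaps.
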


Next we compute the probabilities $\pi_1(l,m,n)$ in terms of the
probabilities $\pi_0(l,m,n)$.

\begin{lemma} We have
 \begin{equation*}
  \pi_1(l,m,n) = \left\{ \begin{array}{ll}
  \pi_0(l-1,1,n-1)/2 & \text{ if } m=0 \text{ and } l \mygeq 1; \\
  \pi_0(l,0,n-1)+\pi_0(l-1,2,n-1) & \text{ if } m=1 \text{ and } l \mygeq 1; \\
  \pi_0(l,1,n-1)/2 & \text{ if } m=2. \\
\end{array} \right.                               
\end{equation*}
Moreover, $\pi_1(0,0,n) = 0$ and $\pi_1(0,1,n) = 1/p^{n(n-1)/2}$.
\end{lemma}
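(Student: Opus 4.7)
The plan is to exploit, when $a_{11} \neq 0$, the $\F_p$-equivalence $Q \sim \langle a_{11} \rangle \perp Q'$ where $Q'$ is a quadratic form in $n - 1$ variables, reducing $\pi_1(l, m, n)$ to a linear combination of probabilities $\pi_0(l', m', n - 1)$. In odd characteristic this decomposition is obtained by completing the square: the substitution $y_1 = x_1 + (2 a_{11})^{-1} \sum_{j > 1} a_{1j} x_j$ lies in $\GL_n(\F_p)$ and transforms $Q$ into $a_{11} y_1^2 + Q'(x_2, \ldots, x_n)$. For fixed $a_{11}$ and $(a_{1j})_{j > 1}$ the coefficients of $Q'$ are affine-linear in the remaining $a_{ij}$, so $Q'$ is uniformly distributed over quadratic forms in $n - 1$ variables as the other coefficients vary uniformly.

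The heart of the proof is the case analysis, for each of the three cases $m = 0, 1, 2$, of which classes of $Q'$ yield $\langle a_{11} \rangle \perp Q' \in [l, m, n]$. For $m = 0$ with $l \mygeq 1$, the regular part of $Q$ has even dimension $2l$ and is fully hyperbolic, so the regular part of $Q'$ has odd dimension $2l - 1$, forcing $Q' \in [l - 1, 1, n - 1]$; moreover the discriminant of the $1$-dimensional anisotropic summand of $Q'$ is constrained by $a_{11}$ so that $\langle a_{11} \rangle \perp Q'_{\mathrm{reg}}$ is $l$ hyperbolic planes. This picks out exactly one of the two $\GL_{n-1}(\F_p)$-orbits inside $[l - 1, 1, n - 1]$, and since both orbits have equal size the conditional probability is $\pi_0(l - 1, 1, n - 1)/2$. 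For $m = 1$ with $l \mygeq 1$, the regular part of $Q$ has odd dimension $2l + 1$, so the regular part of $Q'$ has even dimension $2l$, yielding either $Q' \in [l, 0, n - 1]$ or $Q' \in [l - 1, 2, n - 1]$; by Theorem~\ref{CW+M}(i) every regular $3$-dimensional form over $\F_p$ is isotropic, so both sub-cases yield $\langle a_{11} \rangle \perp Q' \in [l, 1, n]$ for every value of $a_{11}$, producing $\pi_0(l, 0, n - 1) + \pi_0(l - 1, 2, n - 1)$ with no further discriminant constraint. For $m = 2$ the argument is parallel to the $m = 0$ case and gives $\pi_0(l, 1, n - 1)/2$. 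The two base cases are immediate: $\pi_1(0, 0, n) = 0$ because $[0, 0, n]$ is the single zero form and is incompatible with $a_{11} \neq 0$; and $\pi_1(0, 1, n) = p^{-n(n-1)/2}$ corresponds to the unique possibility $Q' \equiv 0$.

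The main obstacle I expect is the characteristic $2$ case, where completing the square fails and $\langle e_1 \rangle$ need not admit an orthogonal complement (specifically when $\phi(e_1, \cdot) \neq 0$, which happens unless all $a_{1j} = 0$ for $j > 1$). One must then argue differently, for instance by splitting off a regular $2$-dimensional subspace containing $e_1$ when $\phi(e_1, \cdot) \neq 0$ and treating the two cases separately, then checking that the final totals match. This works out because in characteristic $2$ the class $[l, 1, n-1]$ is a single $\GL_{n-1}(\F_2)$-orbit whose stabiliser in Lemma~\ref{snk} has half the size of its odd-characteristic counterpart, producing the same value of $\pi_0(l, 1, n-1)$ and hence the same final expressions for $\pi_1$.
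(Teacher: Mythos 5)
Your treatment of odd $p$ is correct and is essentially the paper's argument: complete the square to write $Q \sim a_{11}x_1^2 + F(x_2,\ldots,x_n)$ with $F$ uniformly distributed, then match classes. Your case analysis (rank of $Q$ equals $1+\operatorname{rank}(F)$, so $[l,0,n]$ and $[l,2,n]$ each force $F$ into an odd-rank class where the discriminant condition selects exactly one of two equal-sized orbits, while $[l,1,n]$ receives all of $[l,0,n-1]$ and $[l-1,2,n-1]$) is the same computation the paper does, just conditioned in the other order (the paper fixes $F \in [l-1,1,n-1]$ and observes that the two target classes are equally likely as $a_{11}$ varies). The base cases are handled identically.

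The characteristic $2$ case, however, is a genuine gap. You correctly identify the obstacle (in char $2$ one has $\phi(e_1,e_1)=2a_{11}=0$, so $\langle e_1\rangle$ admits no orthogonal complement unless the whole first row of cross-terms vanishes), but your proposed fix is not carried out: ``splitting off a regular $2$-dimensional subspace containing $e_1$ when $\phi(e_1,\cdot)\neq 0$ and treating the two cases separately, then checking that the final totals match'' is a plan, not a proof, and the case $\phi(e_1,\cdot)=0$ with $Q(e_1)\neq 0$ (a defective vector) requires its own analysis that the sketch does not address. Moreover, the justification you offer --- that $[l,1,n-1]$ is a single orbit over $\F_2$ with a half-sized stabiliser, so $\pi_0(l,1,n-1)$ is given by the same rational function --- only explains why the \emph{right-hand side} of the identity specialises consistently at $p=2$; it says nothing about why the left-hand side $\pi_1(l,m,n)$ takes the claimed value there. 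The paper closes this gap by a different and cleaner route: by Bayes' formula, $\pi_1(l,m,n) = \frac{p}{p-1}\,\pi_0(l,m,n)\cdot \mathbb{P}(a_{11}\neq 0 \mid Q \in [l,m,n])$, and since $\GL_n(\F_p)$ acts transitively on $\F_p^n\setminus\{0\}$ the latter probability is $(p^n - N(Q))/(p^n-1)$ where $N(Q) = \#\{x : Q(x)=0\}$; the recursion $N(Q) = (2p-1)N(Q') + (p-1)(p^{n-2}-N(Q'))$ for $Q = x_1x_2 + Q'$ shows $N(Q)$ is a polynomial in $p$ depending only on $[l,m,n]$. Hence $\pi_1(l,m,n)$ is a single rational function of $p$ valid for all primes, and the identity --- already established for every odd $p$ --- holds as an identity of rational functions and therefore also at $p=2$. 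If you want to keep your direct char-$2$ decomposition instead, you would need to actually perform the two-case computation (including the Arf-invariant bookkeeping for the $2$-dimensional piece and the defective case) rather than assert that the totals agree.
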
                         
  
\begin{proof}
  We first suppose that $p$ is an odd prime.  Let $Q$ be a quadratic
  form in $n$ variables over $\F_p$ with first coefficient
  $a_{11} \not= 0$.  We must compute the probability $\pi_1(l,m,n)$
  that $Q$ belongs to the class $[l,m,n]$.  By a linear substitution
  to eliminate the cross-terms containing $x_1$, we may assume that
  $Q(x_1,x_2,\ldots,x_n) = a_{11}x_1^2 + F(x_2,x_3,\ldots,x_n)$ for
  some $F \in \F_p[x_2,\ldots,x_n]$.  The class of $Q$ is determined
  by the class of $F$ and the value of $a_{11}$. Since the
  coefficients of $F$, like those of $Q$, are randomised according to
  counting measure, we can compute $\pi_1(l,m,n)$ in terms of the
  $\pi_0(l',m',n-1)$ for suitable $l'$ and $m'$.  More precisely, we
  note that if $F$ belongs to the class $[l,0,n-1]$ or $[l-1,2,n-1]$
  then $Q$ belongs to the class $[l,1,n]$, whereas if $F$ belongs to
  the class $[l-1,1,n-1]$ then it is equally likely that $Q$ belongs
  to the class $[l,0,n]$ or $[l-1,2,n]$. The stated formulae follow.

  To prove the lemma when $p=2$ we outline an alternative method for
  computing $\pi_1(l,m,n)$ that gives the answer as a rational
  function in $p$.  In this alternative method we compute
  $\pi_1(l,m,n)$ by finding the probability that a form in $[l,m,n]$
  satisfies $a_{11} \not= 0$, and then multiply by
  $\pi_0(l,m,n) \cdot \frac{p}{p-1}$ according to Bayes' formula. The
  second factor comes from the fact that $a_{11} \not=0$ with
  probability $\frac{p-1}{p}$.  Since $a_{11} = Q(1,0, \ldots,0)$ and
  $\GL_n(\F_p)$ acts transitively on $\F_p^n \setminus \{0\}$ it
  suffices to show that \[N(Q) = \#\{ x \in \F_p^n \mid Q(x) = 0\}\]
  is a polynomial in $p$, where the polynomial depends only on
  $l,m,n$. We prove this claim by induction on $l$, noting that if
  $Q(x_1, \ldots,x_n) = x_1 x_2 + Q'(x_3, \ldots, x_n)$ then
  $N(Q) = (2p-1)N(Q') + (p-1)(p^{n-2} - N(Q'))$, whereas if $l = 0$
  then $N(Q) = p^{n-m}$.
\end{proof}

To determine the values of $\pi_2(l,m,n)$, we use a method similar to
the one we used for calculating $\pi_1(l,m,n)$ for $p$ an odd
prime. However, this proof also includes the case $p=2$.

\begin{lemma} We have
  \[ \pi_2(l,m,n) = \left\{ \begin{array}{ll}
    \pi_0(l-2,2,n-2) &\text{ if } m = 0 \text{ and } l\mygeq 2;\\
    \pi_0(l-1,1,n-2) &\text{ if } m = 1 \text{ and } l\mygeq 1;\\
    \pi_0(l,0,n-2)  &\text{ if } m = 2.
  \end{array} \right. \]
Moreover,
$\pi_2(0,0,n) = \pi_2(0,1,n) = \pi_2(1,0,n) = 0$.
\end{lemma}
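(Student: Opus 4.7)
The plan is to perform a change of variables that orthogonally decomposes $Q$ as $g \perp F$, where $g = a_{11}x_1^2 + a_{12}x_1x_2 + a_{22}x_2^2$ is the given regular anisotropic binary form on $\langle e_1, e_2 \rangle$ and $F$ is a quadratic form in the remaining $n-2$ variables. The class of $Q$ is then determined by the class of $F$, and aggregating the corresponding probabilities over $F$ recovers the claimed formulae.

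For the decomposition, I first note that the bilinear form associated to $g$ is non-degenerate on $\langle e_1, e_2 \rangle$. For odd $p$ this is immediate from regularity; for $p = 2$ regularity forces $a_{12} \neq 0$, since otherwise $g = a_{11}x_1^2 + a_{22}x_2^2 = (\alpha x_1 + \beta x_2)^2$ is singular. Non-degeneracy lets me solve uniquely for linear forms $L_1, L_2 \in \F_p[x_3, \ldots, x_n]$ such that the substitution $y_i = x_i + L_i$ for $i = 1, 2$ (and $y_j = x_j$ for $j \mygeq 3$) eliminates every cross-term between $\{x_1, x_2\}$ and $\{x_3, \ldots, x_n\}$, yielding $Q = g(y_1, y_2) + F(y_3, \ldots, y_n)$. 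For fixed $(a_{11}, a_{12}, a_{22})$, the map from the other $a_{ij}$ to the pair consisting of the cross-term coefficients and the coefficients of $F$ is an $\F_p$-affine bijection, so conditionally on $g$ being regular anisotropic the form $F$ is uniformly distributed on quadratic forms in $n-2$ variables and its distribution does not depend on the choice of $g$.

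Next I match the class of $Q = g \perp F$ with that of $F$. The crucial input is that over $\F_p$ the regular anisotropic form in dimension $2$ is unique up to equivalence, and $g \perp g$ is equivalent to the orthogonal sum of two hyperbolic planes. For odd $p$ this follows from a discriminant computation (there is no regular anisotropic form of dimension $4$ over $\F_p$ by Theorem~\ref{CW+M}(i), so the dim-$4$ regular form with square discriminant is forced to be $[2, 0, 4]$), and for $p = 2$ from an Arf invariant calculation (or \cite[pp.~147--150]{G}). Writing $F$'s class as $[l', m', n-2]$ with regular anisotropic kernel $f$ of dimension $m'$, the class of $Q$ is determined by decomposing the regular dim-$(2+m')$ form $g \perp f$. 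If $m' = 0$ then $g \perp f = g$ is regular anisotropic of dimension $2$, giving $Q \in [l', 2, n]$; if $m' = 1$ then $g \perp f$ is regular of dimension $3$, hence isotropic by Theorem~\ref{CW+M}(i), splitting as a hyperbolic plane perpendicular to a regular anisotropic line, giving $Q \in [l'+1, 1, n]$; if $m' = 2$ then $g \perp f$ splits as two hyperbolic planes, giving $Q \in [l'+2, 0, n]$. Inverting these three correspondences produces the three main formulae, and the boundary cases $\pi_2(0,0,n) = \pi_2(0,1,n) = \pi_2(1,0,n) = 0$ follow since each would force $l' < 0$.

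The main obstacle is the characteristic-$2$ analysis at two points: justifying the orthogonal decomposition when the determinant-based notion of regularity is unavailable, and verifying that $g \perp g$ remains equivalent to two hyperbolic planes. Both are handled by using the notion of regularity tailored to characteristic $2$ recalled in Section~\ref{sec:background}, together with the observation that for a regular anisotropic binary form in characteristic $2$ the bilinear form is still non-degenerate on the ambient two-dimensional space.
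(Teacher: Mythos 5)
Your proposal is correct and follows essentially the same route as the paper's proof: split off the regular anisotropic binary form $g$ by an orthogonal (completing-the-square) substitution, observe that the residual form $F$ in $n-2$ variables is uniformly distributed, and read off the class of $Q = g \perp F$ from the class of $F$. The paper states this in two sentences; you have simply filled in the details it leaves implicit (non-degeneracy of the bilinear form of $g$ in characteristic $2$, uniformity of $F$, and the identification $g \perp f \cong 2H$ via discriminant/Arf invariant), all of which check out.
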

\begin{proof}
  It suffices to consider
  $Q(x_1,x_2,\ldots,x_n) = f(x_1,x_2) + F(x_3,\ldots,x_n)$ for
  $f \in \F_p[x_1,x_2]$ regular anisotropic and
  $F \in \F_p[x_3,\ldots,x_n]$.  The class of $F$ then determines the
  class of $Q$, and this gives the formulae as stated.
\end{proof}

\section{First method: Reduction modulo $\lowercase{p}$ and recursion}
\label{sec:method1}

In this section we give our first method for computing the probability
$\rho_p(k,n)$ that a random $p$-adic integral quadratic form in $n$
variables is $k$-isotropic.

\begin{definition}\label{deltadefs}
  Let $Q$ be a random $p$-adic integral quadratic form in $n$
  variables.
 
  Let $\delta_0(k;l,m,n)$ be the probability that $Q$ is $k$-isotropic
  given that its reduction mod $p$ belongs to the class $[l,m,n]$.

  Let $\delta_1(k;l,m,n)$ be the probability that $Q$ is $k$-isotropic
  given that its reduction mod $p$ belongs to the class $[l,m,n]$, the
  coefficients $a_{11},a_{12}, \ldots, a_{1n}$ are all divisible by
  $p$, but $p^2$ does not divide $a_{11}$.
  
  Let $\delta_2(k;l,m,n)$ be the probability that $Q$ is $k$-isotropic
  given that its reduction mod $p$ belongs to the class $[l,m,n]$, the
  coefficients $a_{11},a_{12}, \ldots, a_{1n}$ and
  $a_{22}, a_{23}, \ldots, a_{2n}$ are all divisible by $p$, but the
  reduction of $\frac{1}{p}(a_{11}x_1^2+a_{12}x_1x_2+a_{22}x_2^2)$ mod
  $p$ is a regular anisotropic form.
\end{definition}

By definition $\delta_0(k;0,0,n)$ is the probability of $k$-isotropy
given that $Q$ vanishes mod $p$. This is the same as $\rho_p(k,n)$.
Our next two results establish recursive relations for computing the
$\delta_i(k;l,m,n)$.

\begin{lemma}\label{deltarelations}
 For $i \in \{0,1,2\}$ we have 
\begin{equation*}%\label{wittremoval}
  \delta_i(k;l,m,n) =
  \left\{ \begin{array}{ll} \delta_i(k-l;0,m,n-2l) &
  \text{ if } k > l; \\
  1 & \text{ if } k \myleq l. \end{array} \right. 
\end{equation*}
\end{lemma}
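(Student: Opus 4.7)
My plan is to split off the $l$ hyperbolic planes visible in the reduction of $Q$ modulo $p$, reducing the question to one about a quadratic form $Q''$ in $n-2l$ variables, and then to apply Witt's cancellation theorem (Theorem~\ref{Witt}). This is exactly the inductive step used in the proof of Lemma~\ref{henselcor}, iterated $l$ times, but with added care to preserve the coefficient conditions that distinguish $\delta_1$ and $\delta_2$ from $\delta_0$.

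Writing the reduction of $Q$ in the standard form~\eqref{modpform}, with radical in variables $x_1,\ldots,x_r$, the divisibility hypotheses for $\delta_1$ (resp.~$\delta_2$) force $x_1$ (resp.~$x_1$ and $x_2$) to lie in that radical. In particular, the $l$ hyperbolic planes of the reduction can be chosen to occupy variables disjoint from the forbidden ones. Lifting an isotropic vector from a chosen hyperbolic plane $\bar x_a\bar x_b$ of the reduction (with $\{a,b\}\cap\{1\}=\emptyset$ or $\{a,b\}\cap\{1,2\}=\emptyset$) via Hensel's lemma, and completing it to a genuine hyperbolic plane as in the proof of Lemma~\ref{henselcor}, produces a $\Z_p$-equivalence that fixes $x_1$ (and $x_2$) and strips off one hyperbolic plane. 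Iterating $l$ times gives
\[
Q \sim x_{r+1}x_{r+2} + x_{r+3}x_{r+4} + \cdots + x_{r+2l-1}x_{r+2l} + Q''(x_1,\ldots,x_r,x_{r+2l+1},\ldots,x_n),
\]
where the reduction of $Q''$ lies in the class $[0,m,n-2l]$ and the substitution fixes $x_1$ (and $x_2$).

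If $k\myleq l$, the hyperbolic summand already provides an $l$-dimensional totally isotropic subspace, so $Q$ is $k$-isotropic and $\delta_i(k;l,m,n)=1$. If $k>l$, write $L$ for the standard $l$-dimensional totally isotropic subspace $\operatorname{span}(x_{r+1},x_{r+3},\ldots,x_{r+2l-1})$ inside the hyperbolic summand. Given a $k$-dimensional totally isotropic $W\subseteq Q$, an $l$-dimensional totally isotropic subspace of $W$ can be transported onto $L$ by an isometry of $Q$ (an extension argument equivalent to Theorem~\ref{Witt}); the image of $W$ then lies inside $L^\perp$, contains $L$, and projects to a $(k-l)$-dimensional totally isotropic subspace of $L^\perp/L$, which is canonically isometric to $Q''$. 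The converse is immediate by orthogonal direct sum, so $Q$ is $k$-isotropic if and only if $Q''$ is $(k-l)$-isotropic.

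Finally, because the splitting substitution is a $\Z_p$-change of variables preserving Haar measure on coefficients, and fixes $x_1$ (and $x_2$), the coefficient conditions for $\delta_1$ and $\delta_2$ transfer verbatim to the corresponding coefficients of $Q''$, and the conditional Haar measure on $Q$ disintegrates into Haar on $Q''$ (restricted to the conditions defining $\delta_i(k-l;0,m,n-2l)$) times a measure on the ``hyperbolic-plane data.'' This yields $\delta_i(k;l,m,n)=\delta_i(k-l;0,m,n-2l)$ for $k>l$. The main obstacle is organising the Hensel construction to be measurable in $Q$ while fixing the forbidden variables so that the disintegration goes through; the key observation that makes it possible is that those forbidden variables were already in the radical modulo $p$ and hence disjoint from the hyperbolic structure being split off.
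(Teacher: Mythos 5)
Your proposal follows essentially the same route as the paper's proof: split off the $l$ hyperbolic planes over $\Z_p$ via Hensel's lemma, use Witt cancellation (your explicit transport-and-quotient argument via $L^\perp/L$ is just a proof of the standard consequence the paper cites) to reduce $k$-isotropy of $Q$ to $(k-l)$-isotropy of $Q''$, and exploit the fact that $x_1$ (and $x_2$) lie in the radical mod $p$ to see that the extra coefficient conditions defining $\delta_1$ and $\delta_2$ survive. The one imprecision is the claim that these conditions transfer \emph{verbatim} because the substitution ``fixes $x_1$'': the change of basis that splits off a hyperbolic plane cannot literally fix $e_1$ over $\Z_p$ (only mod $p$), so the coefficients $a_{11}, a_{12}, a_{22}$ do change --- but only by multiples of $p^2$, each correction being a product of two factors divisible by $p$, which is exactly the point the paper makes and is all that the mod-$p^2$ conditions require.
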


\begin{proof}
  A quadratic form whose reduction modulo $p$ belongs to the class
  $[l,m,n]$ is equivalent over $\Z_p$ to a form which satisfies
\begin{equation}\label{2ndmodpform}
    Q(x_1,\ldots,x_n) \equiv \sum_{i=1}^l x_{r+2i-1}x_{r+2i} + f(x_{r+2l+1},\ldots,x_{n}) \mod p,
\end{equation}
for $f$ a regular anisotropic form over $\F_p$ of dimension
$m \in \{0,1,2\}$. We claim that $Q$ is equivalent over $\Z_p$ to a form
\begin{equation}\label{2ndpadicform}
  Q'(x_1,\ldots,x_n) = \sum_{i=1}^l x_{r+2i-1}x_{r+2i}
  + Q''(x_1,\ldots,x_r,x_{r+2l+1},\ldots,x_{n}),
\end{equation}
where $Q''(x_1,\ldots,x_r,x_{r+2l+1},\ldots,x_{n})
\equiv f(x_{r+2l+1},\ldots,x_{n}) \mod p$.
If $k \myleq l$ it follows immediately that $Q$ is $k$-isotropic.  If
$k > l$ then by Witt's Cancellation Theorem, the form $Q$ is
$k$-isotropic if and only if the form $Q''$ is $(k-l)$-isotropic.  So
it only remains to prove the claim, and at the same time convince
ourselves that $Q''$ is suitably randomised.

For simplicity let us suppose that $l=1$. Then the $\Z_p$-equivalence
taking~\eqref{2ndmodpform} to~\eqref{2ndpadicform} is built out of two
sorts of transformations. First we let $\GL_2(\Z_p)$ act on the
variables $x_{r+1}$ and $x_{r+2}$ by linear substitution. (As in the
proof of Lemma~\ref{henselcor} the
required substitution exists by Hensel's lemma.)  Then we make
substitutions for $x_{r+1}$ and $x_{r+2}$ where we add to each $p$
times a linear combination of the other variables.  Thus
$Q''(x_1, \ldots, x_r,x_{r+3}, \ldots, x_n)$ is obtained from
$Q(x_1, \ldots, x_r,0,0,x_{r+3}, \ldots, x_n)$ by adding a form which
vanishes mod $p^2$. Since the extra conditions on $Q$ in the
definition of the $\delta_i$ for $i=1,2$ are conditions on the
coefficients mod $p^2$, these are not affected by this change.
\end{proof}

\begin{lemma}\label{recrel}
For $i,j \in \{0,1,2\}$ and $n\mygeq i+j$ we have
\begin{equation*}
\begin{aligned}
  \delta_i(k;0,j,n) &= \sum_{l \mygeq 0}%^{\lfloor \frac{n-j}{2}\rfloor}
  \sum_{m=0}^{2}\pi_i(l,m,n-j)\delta_j(k;l,m,n). 
\end{aligned}
\end{equation*}
Moreover, if $n = i + j$ then \[\delta_i(k;0,j,n) = \left\{
    \begin{array}{ll} 1 & \text{ if } k = 0; \\
    0 & \text{ if } k \mygeq 1. \end{array} \right. \]
\end{lemma}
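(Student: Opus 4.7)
My plan is to combine the Hensel--Witt splitting used in the proof of Lemma~\ref{deltarelations} with a decomposition by the mod-$p$ class of the \emph{inner} quadratic form produced by the split, together with an identification $\delta_0(k;l,m,n-j)=\delta_j(k;l,m,n)$ obtained by running the same split in reverse.

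For the base case $n=i+j$, I would take $Q$ satisfying the conditions of $\delta_i(k;0,j,n)$ and use a $\Z_p$-equivalence to split off both the anisotropic mod-$p$ regular part (of dimension $j$) and, via the $i$-conditions in Definition~\ref{deltadefs}, the $p$-scaled block on the first $i$ variables. This writes $Q$, up to $\Z_p$-equivalence, as $\tilde f(y_1,\ldots,y_j)+p\,Q^\ast(z_1,\ldots,z_i)$, where both $\tilde f$ and $Q^\ast$ are regular anisotropic mod $p$. The claim that $Q$ is then anisotropic over $\Q_p$ follows by a two-step reduction: if $v=(v_y,v_z)\in\Z_p^n$ were a primitive isotropic vector, reducing $\tilde f(v_y)+p\,Q^\ast(v_z)=0$ mod $p$ forces $v_y\equiv 0\pmod p$, and substituting $v_y=pv_y'$ in turn forces $Q^\ast(v_z)\equiv 0\pmod p$ and hence $v_z\equiv 0\pmod p$, contradicting primitivity. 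So $Q$ has Witt index $0$, giving the stated boundary values.

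For the recursion ($n>i+j$), the plan is to choose a $\Z_p$-equivalence fixing $e_1,\ldots,e_i$ (which is possible because the $i$-conditions place these vectors in the radical of $Q\bmod p$) that splits off the $j$-dimensional anisotropic mod-$p$ block, so that $Q\sim_{\Z_p}\tilde f(y_1,\ldots,y_j)+p\,Q^\ast(z_1,\ldots,z_{n-j})$ with $z_r=e_r$ for $r\myleq i$. Since $\tilde f$ is regular anisotropic mod $p$ it is anisotropic over $\Q_p$ (argued as in Lemma~\ref{henselcor}), so Witt's Cancellation Theorem yields that $Q$ is $k$-isotropic if and only if $Q^\ast$ is. The translated conditions on $Q^\ast$ (for instance $p^2\nmid a_{11}$ becomes $a_{11}^{Q^\ast}\not\equiv 0\pmod p$) match precisely the hypotheses defining $\pi_i$ in Definition~\ref{pidefs}, so $P(Q^\ast\bmod p\in[l,m,n-j])=\pi_i(l,m,n-j)$ and, conditional on this class, the probability of $k$-isotropy of $Q^\ast$ equals $\delta_0(k;l,m,n-j)$ because the $i$-conditions are mod-$p$ data and, once the class has been fixed, do not further constrain the higher $p$-adic digits that determine $k$-isotropy.

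Finally, to match the statement as written one checks $\delta_0(k;l,m,n-j)=\delta_j(k;l,m,n)$ by running the Hensel--Witt split in reverse: a random $Q'$ in $n$ variables satisfying the hypotheses of $\delta_j(k;l,m,n)$ is $\Z_p$-equivalent to $p\,\tilde g(x_1,\ldots,x_j)+Q''(x_{j+1},\ldots,x_n)$, where $\tilde g$ is anisotropic over $\Q_p$ by the $j$-conditions and $Q''$ is a Haar-random form in $n-j$ variables with reduction in $[l,m,n-j]$, so the two probabilities coincide. The principal obstacle throughout is verifying that the Hensel substitutions go through uniformly in $p$, including $p=2$ where completing the square is unavailable; this is handled, as in the alternative argument used for $\pi_1$ earlier in the paper, by tracking coefficients through a point-counting / Bayes' rule manoeuvre rather than by explicit diagonalisation.
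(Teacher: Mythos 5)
There is a genuine gap in your recursive step. After splitting $Q \sim_{\Z_p} \tilde f(y_1,\ldots,y_j) \perp p\,Q^\ast(z_1,\ldots,z_{n-j})$ you assert that, because $\tilde f$ is anisotropic over $\Q_p$, Witt's Cancellation Theorem gives ``$Q$ is $k$-isotropic iff $Q^\ast$ is''. Witt cancellation does not say this: it lets one cancel a common \emph{regular isometric} summand from an isometry (and hence split off hyperbolic planes, lowering $k$ by one per plane, as in Lemma~\ref{deltarelations}), but the Witt index of $\tilde f \perp p\,Q^\ast$ is not determined by that of $Q^\ast$ when $\tilde f$ is merely anisotropic --- already $x^2 \perp (-y^2)$ is isotropic while $-y^2$ is not. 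So the conditional probability of $k$-isotropy of $Q$ given the class of $Q^\ast$ is not $\delta_0(k;l,m,n-j)$. The patching identity $\delta_0(k;l,m,n-j)=\delta_j(k;l,m,n)$ you then invoke is also false: by Theorem~\ref{thm:deltas} one has $\delta_1(1;0,0,3)=\tfrac{2p+1}{2(p+1)}$, whereas $\delta_0(1;0,0,2)=\rho_p(1,2)=\tfrac12$. Your reverse-split justification of that identity discards the anisotropic summand $p\tilde g$ in the same illegitimate way, so the two errors formally cancel in the displayed formula without either step being true.

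The paper's proof avoids this trap by never detaching the anisotropic block over $\Q_p$ and never invoking Witt cancellation here. It applies to the whole $n$-variable form the operation ``multiply the last $j$ variables by $p$, then divide the form by $p$''. This visibly preserves $k$-isotropy, yields a new integral form in all $n$ variables whose reduction mod $p$ lands in $[l,m,n]$ with probability $\pi_i(l,m,n-j)$, and --- the key point --- this new form satisfies precisely the conditions of Definition~\ref{deltadefs} defining $\delta_j$ (relative to the last $j$ variables rather than the first $j$, which does not matter), so the conditional probability of $k$-isotropy is $\delta_j(k;l,m,n)$ \emph{by definition}: the interaction between the deep anisotropic block and the remaining variables is absorbed into that definition rather than cancelled away. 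Your treatment of the boundary case $n=i+j$ (the two-step descent showing anisotropy over $\Q_p$) does match the paper and is correct.
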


The condition $n\mygeq i+j$ ensures that $\pi_i(l,m,n-j)$ is defined.
It can only be non-zero if $2 l + m \myleq n-j$, in which case
$\delta_j(k;l,m,n)$ is defined. In particular the sum over $l$ is
finite.

\begin{proof}
  Let $Q$ be a $p$-adic integral quadratic form of dimension $n$
  whose reduction mod $p$ belongs to the class $[0,j,n]$.
  By an equivalence over $\Z_p$ we may suppose that the reduction
  of $Q$ mod $p$ is an anisotropic form in the
  last $j \in \{0,1,2\}$ variables. We multiply each of the last
  $j$ variables by $p$, and divide through by $p$ to give a
  new $p$-adic integral quadratic form, whose reduction mod $p$
  involves only the first $n-j$ variables. If $i=1$ or $2$ then
  the additional conditions in Definition~\ref{deltadefs}
  give the additional conditions in
  Definition~\ref{pidefs}. The reduction mod $p$ now has
  class $[l,m,n]$ with probability $\pi_i(l,m,n-j)$, and in
  this case the form is $k$-isotropic with probability
  $\delta_j(k;l,m,n)$. In checking this last statement, notice that
  the extra conditions in Definition~\ref{deltadefs} when $j=1$ or $2$
  are satisfied relative to the last $j$ variables rather than
  the first $j$ variables. This change clearly does not matter.
  Summing over all possibilities for $l$ and $m$ gives the result.

  For the final part we show that if $n = i + j$ then the forms
  considered in the definition of $\delta_i(k;0,j,n)$ are anisotropic
  over $\Q_p$. For example, if $i=j=2$ and $n=4$ then the reduction of
  $Q(x_1,\ldots,x_4)$ mod~$p$ is an anisotropic form in $x_3$ and
  $x_4$, and the reduction of $\frac{1}{p} Q(x_1,x_2,px_3,px_4)$ mod
  $p$ is an anisotropic form in $x_1$ and $x_2$. Supposing that
  $Q(a_1, \ldots, a_4)=0$ for some $a_1,\ldots,a_4 \in \Z_p$ not all
  divisible by $p$ these conditions quickly lead to a contradiction.
  The other cases are similar.
\end{proof}

\begin{proposition}
  \label{cansolve}
  The relations in Lemmas~\ref{deltarelations} and~\ref{recrel} are
  sufficient to determine all the $\delta_i(k;l,m,n)$ and to show that
  they are rational functions in $p$.  The same is therefore true of
  $\rho_p(k,n) = \delta_0(k;0,0,n)$.
\end{proposition}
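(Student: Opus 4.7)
The plan is first to use Lemma~\ref{deltarelations} to reduce the problem to a nine-parameter family of quantities $D_{i,j}(k,n) := \delta_i(k;0,j,n)$ for $i,j \in \{0,1,2\}$, and then to proceed by induction on $k$ using Lemma~\ref{recrel}. Lemma~\ref{deltarelations} expresses every $\delta_i(k;l,m,n)$ either as $1$ (when $k \myleq l$) or as $\delta_i(k-l;0,m,n-2l)$ (when $k > l$), so it indeed suffices to pin down the $D_{i,j}(k,n)$. For the induction, the base case $k=0$ gives $D_{i,j}(0,n)=1$ directly from Lemma~\ref{deltarelations}, and the boundary case $n=i+j$ with $k \mygeq 1$ gives $D_{i,j}(k,i+j)=0$ by the explicit clause at the end of Lemma~\ref{recrel}.

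For the inductive step I fix $k \mygeq 1$ and $n > i+j$ and expand the right-hand side of Lemma~\ref{recrel}, substituting Lemma~\ref{deltarelations} term-by-term. The $l=0$ summand contributes $\sum_m \pi_i(0,m,n-j)\,D_{j,m}(k,n)$; the summands with $1 \myleq l < k$ contribute $\pi_i(l,m,n-j)\,D_{j,m}(k-l,n-2l)$ whose $D$-values are known by induction (since $k-l < k$); and the summands with $l \mygeq k$ contribute bare $\pi$-values. Moving the $l=0$ piece to the left yields
\[
 D_{i,j}(k,n) - \sum_{m=0}^{2} \pi_i(0,m,n-j)\,D_{j,m}(k,n) \;=\; R_{i,j}(k,n),
\]
where $R_{i,j}(k,n) \in \Q(p)$ is inductively known. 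Letting $(i,j)$ range over $\{0,1,2\}^2$ (restricted to pairs with $n \mygeq i+j$) produces a linear system $(I-M)\vec{D}(k,n)=\vec{R}(k,n)$ of size at most $9$, in which $M$ has non-negative entries $M_{(i,j),(i',m)} = \pi_i(0,m,n-j)\,\mathbf{1}[i'=j]$, themselves rational in~$p$.

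The main obstacle is showing that $I-M$ is invertible over $\Q(p)$. The $(i,j)$-th row sum of $M$ equals $\sum_m \pi_i(0,m,n-j)$, which is the probability that a random $(n-j)$-dimensional form (under condition $i$) has no hyperbolic planes; this is strictly less than~$1$ as soon as $n-j \mygeq 2$, and hence for every admissible $(i,j)$ once $n \mygeq 4$. In that regime $\|M\|_\infty < 1$, so the Neumann series $(I-M)^{-1} = \sum_{t \mygeq 0} M^t$ converges to a matrix of rational functions in~$p$. The finitely many remaining cases $n \myleq 3$ either collapse onto boundary values from Lemma~\ref{recrel} or yield a genuinely smaller sub-system that can be verified invertible by inspection. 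Cramer's rule then gives $\vec{D}(k,n) \in \Q(p)^9$, completing the induction and showing that $\rho_p(k,n) = D_{0,0}(k,n)$ is a rational function in~$p$.
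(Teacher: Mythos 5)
Your proposal is correct in substance and follows the same skeleton as the paper's proof --- everything is funnelled through the quantities $\delta_i(k;0,j,n)$ via Lemma~\ref{deltarelations}, and Lemma~\ref{recrel} is read as a linear system whose only genuinely unknown terms are those with $l=0$ --- but you solve that system by a different mechanism. The paper exploits the vanishing $\pi_i(0,m,N)=0$ for $m<i$: ordering the unknowns by $n$ and then by \emph{decreasing} $i+j$, the system becomes triangular with $2\times 2$ diagonal blocks coupling $\delta_i(k;0,j,n)$ and $\delta_j(k;0,i,n)$ through the explicit coefficient $p^{-\binom{n+1-i-j}{2}}$, and invertibility of each block is immediate because the product of the two coupling coefficients differs from $1$ exactly when $n>i+j$. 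You instead keep the whole (at most) $9\times 9$ system for fixed $n$, control the right-hand side by induction on $k$ rather than on $n$, and argue invertibility of $I-M$ by diagonal dominance. Both routes work; the paper's is more explicit and shows precisely where $n>i+j$ is used, while yours avoids having to notice the triangular structure.

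One of your claims needs repair. It is \emph{not} true that $\sum_m \pi_i(0,m,n-j)<1$ whenever $n-j\geq 2$: for $i=2$ and $n-j=2$ the conditioning in Definition~\ref{pidefs} forces the form to be regular anisotropic of dimension $2$, so $\pi_2(0,2,2)=1$ and the row sum equals $1$. Consequently for $n=4$ the row $(i,j)=(2,2)$ violates $\lVert M\rVert_\infty<1$, and this case is not covered by your carve-out ``$n\leq 3$''. (Similarly $\sum_m\pi_0(0,m,1)=1$, although those rows do satisfy $n\leq 3$.) The fix is essentially what you already half-state: the offending rows at $n\geq 4$ occur only when $n=i+j$, where the unknown is already known to be $0$ for $k\geq 1$ by the last part of Lemma~\ref{recrel}; delete those rows and columns, moving the known values into the right-hand side, and observe that every surviving row has $n>i+j$, so that $n-j\geq 2$ with $n-j\geq 3$ when $i=2$, and then all row sums really are $<1$ once $n\geq 4$. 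Together with an actual verification of the finitely many cases $n\leq 3$ (which you leave to ``inspection'', and which do reduce to solvable subsystems after substituting the boundary values), and the observation that a matrix invertible over $\R$ for every prime $p$ has nonvanishing determinant as a rational function of $p$, your argument is complete.
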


\begin{proof}
  Combining the two lemmas shows that
  \[ \delta_i(k;0,j,n) = \frac{1}{p^{\binom{n+1-i-j}{2}}}
    \delta_j(k;0,i,n) + \ldots \] where the terms omitted involve
  either a smaller value of $n$ or a larger value of $i+j$. Assuming
  all such previous values have been computed, we can uniquely solve
  for $\delta_i(k;0,j,n)$ and $\delta_j(k;0,i,n)$ provided that
  $n > i + j$. It is clear from Definition~\ref{deltadefs} that we
  must have $n \mygeq i+j$ and the remaining case where $n=i+j$ is
  covered by the last part of Lemma~\ref{recrel}.  Finally we use
  Lemma~\ref{deltarelations} to compute the $\delta_i(k;l,m,n)$ with
  $l > 0$.

  Since we saw in Section~\ref{sec:count} that the $\pi_i(l,m,n)$ are
  rational functions in $p$, it follows that the $\delta_i(k;l,m,n)$
  are also rational functions in $p$.
\end{proof}  

Proposition~\ref{cansolve} together with the results of the next
section are all we shall need for the proof of Theorem~\ref{thm:main}.
It is nonetheless still interesting to find explicit closed formulae
for the $\delta_i(k;l,m,n)$. We do this now, leaving some of the
details to Appendix~\ref{app}.

\begin{definition}
\label{def:phipsi}
For $i,j \in \{0,1,2\}$ and $n \mygeq i + j$ we define
\begin{align*}
  \phi(i,j,n) &= \big( (j - 1) p^d + (i - 1) \big) \cdot
            \prod_{r=1}^d \frac{p^{2r-1} - 1}{p^{2r} - 1}, \\
  \psi(i,j,n) &= \frac{\big((j - 1) p^d + (i - 1) \big)
  \big( (j - 1) p^{d+2} - (i - 1) \big) - \delta_{i1} p
  + \delta_{j1} p^{2 d + 1}}{ (p + 1)(p^{2d+1} - 1)},
\end{align*}
where $d = \lfloor \frac{n+1-i-j}{2} \rfloor$ and $\delta_{ij}$ is
the Kronecker delta.
\end{definition}

\begin{proposition}\label{prop:solve-recur}
  Let $i,j \in \{0,1,2\}$ and $n \mygeq i+j$. Then
  \begin{equation*}
    \phi(i,j,n) = \sum_{\myell \mygeq 0} \sum_{m=0}^2 \pi_i(\myell,m,n-j)
    \phi(j,m,n-2\myell),
  \end{equation*}
  and if $n$ is {\bf even} then 
  \begin{equation*} 
  \psi(i,j,n) = \sum_{\myell \mygeq 0} \sum_{m=0}^2 \pi_i(\myell,m,n-j)
                  \psi(j,m,n-2\myell).
  \end{equation*}
\end{proposition}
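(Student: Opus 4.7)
The plan is to verify both identities by direct substitution, using the explicit formulae for $\pi_i(l,m,n-j)$ from Lemma~\ref{pi0} and the two lemmas that follow it, together with the closed forms of $\phi$ and $\psi$ from Definition~\ref{def:phipsi}. These are $q$-hypergeometric-style identities in $p$, and the argument is essentially algebraic, requiring no new structural insight beyond the bookkeeping needed to organise the sums.

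First I would fix $i,j \in \{0,1,2\}$ and set $d_m = \lfloor (n-j-m+1)/2 \rfloor$ for each $m \in \{0,1,2\}$, so that both $\phi(j,m,n-2l)$ and $\psi(j,m,n-2l)$ become expressions whose shape depends on $d_m - l$ in place of $d$. Substituting into the right-hand side produces a sum over $l$ and $m$ of products of $\pi_i(l,m,n-j)$ with a polynomial factor such as $(m-1)p^{d_m-l}+(j-1)$ together with a truncated tail $\prod_{r=1}^{d_m-l}\frac{p^{2r-1}-1}{p^{2r}-1}$. The three pieces share many common factors: the truncated tail has the same shape as the right-hand factor of $\phi$, while $\pi_i(l,m,n-j)$ contains blocks like $\prod_{s=0}^{2l+\epsilon-1}(p^{n-j}-p^s)$ (with $\epsilon \in \{0,1,2\}$ depending on $m$) and denominators coming from $S(m,2l+m)$ that combine with that tail. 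The combined $l$-sum then telescopes, or collapses by a geometric-series argument, leaving behind an $l$-independent prefactor $\prod_{r=1}^{d}\frac{p^{2r-1}-1}{p^{2r}-1}$ with $d = \lfloor (n-i-j+1)/2 \rfloor$.

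After dividing through by this prefactor, each identity reduces to a finite polynomial identity in $p$ (finite because $\pi_i(l,m,n-j)$ vanishes once $2l+m > n-j$), which can be checked by matching coefficients. For the $\phi$-identity one needs to match the coefficients of $p^d$ and of $p^0$ appearing in $(j-1)p^d+(i-1)$; for the $\psi$-identity the numerator involves $p^{2d+2},p^{d+2},p^d,p^1$ and $p^0$ together with the Kronecker-delta contributions $\delta_{i1}$ and $\delta_{j1}$, so several coefficients must be matched simultaneously. The hypothesis that $n$ is even enters in the $\psi$-case because it controls the parities of $d_m - d$ as $m$ varies over $\{0,1,2\}$, ensuring that the common denominator $(p+1)(p^{2d+1}-1)$ appears uniformly across the sum.

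The main obstacle I expect is simply the volume of case analysis: the nine pairs $(i,j) \in \{0,1,2\}^2$ each need to be handled, and the Kronecker deltas in $\psi$ cause the verification to branch further on whether $i$ or $j$ equals $1$. I would mitigate this by exploiting the near-symmetry of $\phi$ and $\psi$ under $(i,j) \leftrightarrow (j,i)$ to pair up cases, and by relegating the detailed per-case algebra to Appendix~\ref{app}, as the paper indicates.
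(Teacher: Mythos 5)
Your plan correctly identifies the statement as an algebraic identity in $p$ that should follow from the explicit formulae for the $\pi_i$, and your organisation of the sum by the decomposition of $\phi$ and $\psi$ into pieces indexed by powers of $(j-1)$ matches what the paper does (compare Lemma~\ref{lem:pi_and_f}(ii)). However, there is a genuine gap at the step where you assert that ``the combined $l$-sum then telescopes, or collapses by a geometric-series argument.'' It does not. After the bookkeeping, the crucial sums one must evaluate are those of Lemma~\ref{lem:twoid}, namely $\sum_{\myell \text{ even}} \lambda(n-\myell,n)B(\myell)p^{\myell}$ and its odd counterpart, and these collapse only via the nontrivial $q$-identity of Lemma~\ref{lem:os} --- the statement that the rank probabilities of a random $m\times n$ matrix over $\F_p$ sum to $1$ --- applied with $p$ replaced by $p^2$. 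This is an orbit--stabiliser (equivalently, $q$-Vandermonde-type) identity, not a telescoping or geometric series, and without identifying it (or an equivalent) the proof does not close.

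A second, related problem is your claim that after dividing by the prefactor each identity ``reduces to a finite polynomial identity in $p$ which can be checked by matching coefficients.'' The identity to be proved is a family indexed by $n$, and the number of summands in $\myell$ grows with $n$; the ratio of the truncated product $\prod_{r=1}^{d_m-\myell}$ to the full product $\prod_{r=1}^{d}$ is a rational function of unbounded complexity, so there is no single finite coefficient-matching computation that covers all $n$. The paper circumvents this by proving identities that are \emph{linear} in auxiliary parameters (Lemmas~\ref{lem:id1} and~\ref{lem:id2} are linear in $x$, respectively in $(x,y,z)$), so that it suffices to verify two, respectively three, well-chosen specialisations, with the remaining content carried by Lemmas~\ref{lem:os} and~\ref{lem:twoid}. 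Your proposal would need either this linearity trick or an induction on $n$ to replace the unjustified ``matching coefficients'' step; as written, the hardest part of the argument is precisely the part that is waved away.
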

\begin{proof} We prove this in Appendix~\ref{app} by adapting methods
   of Kovaleva \cite{K}. % (pp. 12--15).
\end{proof}

Theorem~\ref{thm:main} is the special case $i=j=0$ of the following
result.

\begin{theorem}
\label{thm:deltas}
For any $i,j \in \{0,1,2\}$ and $n \mygeq i + j$ we have
\[
  \delta_i(k;0,j,n) = \left\{ \begin{array}{ll} \vspace{1ex}
  0 & \text{ if } n \myleq 2k-1; \\ \vspace{1ex}
  \frac{1}{4} \big( -\phi(i,j,n) + \psi(i,j,n) \big)
  & \text{ if } n = 2k; \\  \vspace{1ex}
  \frac{1}{2} \big( 1 - \phi(i,j,n) \big) & \text{ if } n = 2 k + 1;
    \\ \vspace{1ex}
  1 - \frac{1}{4} \big( \phi(i,j,n) + \psi(i,j,n) \big)
               & \text{ if } n = 2k + 2; \\
             1 & \text{ if } n \mygeq 2k+3. \end{array} \right. \]
\end{theorem}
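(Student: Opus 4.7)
The plan is to argue by strong induction on $n$ that the right-hand side of the claimed formula, which I denote $\widetilde{\delta}_i(k;0,j,n)$, satisfies the same recursion as the true probabilities $\delta_i(k;0,j,n)$; equality then follows from the uniqueness established in Proposition~\ref{cansolve}.

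For the base case $n = i + j$, Lemma~\ref{recrel} gives $\delta_i(k;0,j,n) = 0$ since $k \mygeq 1$. Because $i + j \myleq 4$, only the ranges $n \myleq 2k - 1$, $n = 2k$ and $n = 2k+1$ are accessible here, and in each of these the parameter $d$ of Definition~\ref{def:phipsi} equals $0$, so the check that $\widetilde{\delta}$ also vanishes reduces to a short substitution.

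For the inductive step, combining Lemmas~\ref{deltarelations} and~\ref{recrel} yields
\[
  \delta_i(k;0,j,n) = \sum_{l < k}\sum_m \pi_i(l,m,n-j)\,\delta_j(k-l;0,m,n-2l)
   + \sum_{l \mygeq k}\sum_m \pi_i(l,m,n-j),
\]
where the inductive hypothesis permits the replacement $\delta_j \to \widetilde{\delta}_j$ in the first sum. The regimes $n \myleq 2k - 1$ and $n \mygeq 2k + 3$ are straightforward: in the first only $l < k$ contributes and every $\widetilde{\delta}_j$ is $0$, while in the second all terms equal $1$ and sum to $\sum_l\sum_m \pi_i(l,m,n-j) = 1$. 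For $n \in \{2k, 2k+1, 2k+2\}$ the closed form for $\widetilde{\delta}$ is an affine combination of $\phi(i,j,n)$ and (when $n$ is even) $\psi(i,j,n)$, and the $\phi$- and $\psi$-recursions of Proposition~\ref{prop:solve-recur} over the full range $l \mygeq 0$ absorb the $l < k$ portion of the recursion for $\widetilde{\delta}$. Rearranging, the task reduces to showing that the ``tail'' contribution from $l \mygeq k$ on the $\phi,\psi$ side matches the constant-$1$ contribution on the $\delta$ side.

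This last reduction is the main obstacle, and is handled by a handful of boundary identities for $\phi$ and $\psi$ at small arguments $n - 2l \in \{0, 1, 2\}$. Specifically, I would verify that $\phi(j,m,1) = -1$ for every $(j,m)$ with $j + m \myleq 1$ (needed when $n = 2k+1$), that $\phi(j,m,2) + \psi(j,m,2) = 0$ for every $(j,m)$ with $j + m \myleq 2$ together with $\phi(0,0,0) + \psi(0,0,0) = 0$ (needed when $n = 2k+2$), and that $-\phi(0,0,0) + \psi(0,0,0) = 4$ (needed when $n = 2k$). Each of these is an immediate substitution into Definition~\ref{def:phipsi} with $d \in \{0,1\}$. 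Once these boundary identities are in place, Proposition~\ref{prop:solve-recur} carries the substantive content of the recursion and the induction closes.
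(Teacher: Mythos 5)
Your proposal is correct and takes essentially the same route as the paper: one verifies that these particular linear combinations of $1$, $\phi$ and $\psi$ satisfy the recurrences of Proposition~\ref{cansolve} — using Proposition~\ref{prop:solve-recur} for the full sum over $\myell \mygeq 0$ together with exactly the boundary evaluations you list (all of which do hold) to handle the $\myell \mygeq k$ tail — and then appeals to the uniqueness established in Proposition~\ref{cansolve}, which also takes care of the $\myell = 0$ terms at the same $n$ that a bare induction on $n$ would not cover. One small slip: at the base case $n = i+j$ the regime $n = 2k+2$ is also accessible (namely $(i,j) = (2,2)$ with $k=1$), but since $d=0$ there too, the required identity $\phi(2,2,4) + \psi(2,2,4) = 4$ is the same kind of immediate substitution.
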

\begin{proof} By Proposition~\ref{prop:solve-recur} these are
  solutions to the recurrence relations in Proposition~\ref{cansolve}. These
  particular linear combinations of $1$, $\phi$ and $\psi$ also
  satisfy the initial conditions, that is, we checked they give the
  correct answers when $n = i + j$.
\end{proof}

As explained in the introduction, the following corollary is
interesting since it generalises a phenomenon studied in \cite{BCFG}.

\begin{corollary}
  \label{cor:p<->1/p}
  The probabilities $\delta_i(k;l,j,n)$ and $\delta_j(k;l,i,n)$ are
  rational functions in $p$ that are exchanged when we replace $p$ by
  $1/p$. In particular $\rho_p(k,n) = \delta_0(k;0,0,n)$ is unchanged
  when we replace $p$ by $1/p$.
\end{corollary}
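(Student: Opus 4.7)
The plan is to reduce to the case $l = 0$ and then check the symmetry directly from the closed formula in Theorem~\ref{thm:deltas}. Observe first that if $k \myleq l$ then Lemma~\ref{deltarelations} gives $\delta_i(k;l,j,n) = \delta_j(k;l,i,n) = 1$, which is trivially invariant. If $k > l$ the same lemma yields
\[
\delta_i(k;l,j,n) = \delta_i(k-l;0,j,n-2l), \qquad \delta_j(k;l,i,n) = \delta_j(k-l;0,i,n-2l),
\]
so it suffices to establish the corollary for $l = 0$.

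For $l = 0$, Theorem~\ref{thm:deltas} expresses $\delta_i(k;0,j,n)$ as $0$ (for $n \myleq 2k-1$), $1$ (for $n \mygeq 2k+3$), or as a fixed $\Z$-linear combination of $1$, $\phi(i,j,n)$ and $\psi(i,j,n)$ (for $n \in \{2k,2k+1,2k+2\}$) in which the coefficients do not involve $i$ or $j$. Therefore it is enough to prove the two identities
\[
\phi(i,j,n)\bigr|_{p \mapsto 1/p} = \phi(j,i,n), \qquad \psi(i,j,n)\bigr|_{p \mapsto 1/p} = \psi(j,i,n),
\]
noting that the integer $d = \lfloor (n+1-i-j)/2 \rfloor$ is symmetric in $(i,j)$.

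For $\phi$ the verification is a short calculation: the substitution $p \mapsto 1/p$ sends $(j-1)p^{-d} + (i-1)$ to $p^{-d}\big((i-1)p^d + (j-1)\big)$, while each factor $\frac{p^{-(2r-1)}-1}{p^{-2r}-1}$ equals $p \cdot \frac{p^{2r-1}-1}{p^{2r}-1}$; the $d$ extra factors of $p$ cancel the $p^{-d}$ and one obtains $\phi(j,i,n)$. For $\psi$ one proceeds similarly: under $p \mapsto 1/p$, the denominator $(p+1)(p^{2d+1}-1)$ picks up a factor $-p^{-(2d+2)}$, and multiplying the numerator by $-p^{2d+2}$ one finds
\[
(i-1)^2 p^{2d+2} + (i-1)(j-1)p^d(p^2-1) - (j-1)^2 - \delta_{j1} p + \delta_{i1} p^{2d+1},
\]
which is exactly the numerator of $\psi(j,i,n)$.

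The main obstacle is the bookkeeping for $\psi$, where one must track the transformation of each of the five terms in the numerator and the sign picked up from the denominator; once this calculation is done everything else is formal. The final statement $\rho_p(k,n) = \delta_0(k;0,0,n)$ is the diagonal case $i = j = 0$, for which the symmetry just proved gives $\rho_p(k,n)|_{p \mapsto 1/p} = \rho_p(k,n)$.
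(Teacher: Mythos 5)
Your proof is correct and follows exactly the paper's route: reduce to $l=0$ via Lemma~\ref{deltarelations}, then read the symmetry off from Definition~\ref{def:phipsi} and Theorem~\ref{thm:deltas}; you have simply written out the $p \mapsto 1/p$ computations for $\phi$ and $\psi$ that the paper leaves implicit, and both check out (only quibble: the combinations in Theorem~\ref{thm:deltas} are $\Q$-linear, not $\Z$-linear, which changes nothing).
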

\begin{proof} By Lemma~\ref{deltarelations} it suffices to prove the
  case $l = 0$. The symmetries claimed then follow from
  Definition~\ref{def:phipsi} and Theorem~\ref{thm:deltas}.
\end{proof}  

\section{Second method: Using a theorem of Kovaleva}
\label{sec:classification}

In this section we deduce Theorem~\ref{thm:main} from a result of
Kovaleva \cite{K}. First we recall the classification of quadratic
forms over $\Q_p$ up to equivalence.

\begin{definition}
  Let $a$, $b \in \Q_p^*$. The \textit{Norm-Residue symbol}, denoted
  $\binom{a,b}{p}$ or more simply as $(a,b)$, is set to be $1$ when
  the form $ax^2+by^2-z^2$ vanishes for some $x,y,z\in \Q_p$ not all
  zero, and $-1$ otherwise.
\end{definition}

% \begin{lemma}
%   Every quadratic form $Q \in \Z_p[x_1,\ldots,x_n]$ is equivalent
%   over $\Q_p$ to a diagonal form
%   $Q'(x_1,\ldots,x_n) = \sum_{i=1}^n a_{i}x_i^2$ for some
%   $a_{i} \in \Z_p$.
% \end{lemma}
% \begin{proof}
% See \cite{Cassels-RQF} (pp. 13--14).
% \end{proof}

\begin{definitionlemma}
  Let $Q \in \Q_p[x_1,\ldots,x_n]$ be a quadratic form of rank $n$
  which is equivalent to a diagonal form
  $Q'(x_1,\ldots,x_n) = \sum_{i=1}^n a_{i}x_i^2$. The
  \textit{Hasse-Minkowski invariant} of the form $Q$ is defined as
  $c(Q) = \prod_{i<j}(a_i,a_j)$. This is independent of the choice of
  diagonal form.
\end{definitionlemma}

\begin{proof}
See \cite[pp. 56--58]{Cassels-RQF}.
\end{proof}

%Change to British spelling?

\begin{theorem}
  A quadratic form $Q \in \Q_p[x_1,\ldots,x_n]$ of rank $n$ is uniquely
  determined up to $\Q_p$-equivalence by its determinant $d(Q)
  \in \Q_p^*/(\Q_p^*)^2$ and its Hasse-Minkowski invariant $c(Q) \in \{\pm 1\}$.
  % Let $c(Q)$, $d(Q)$, $n(Q)$ be the Hasse-Minkowski invariant, the
  % determinant as a value in $\Q_p^*/(\Q_p^*)^2$, and the rank of the
  % quadratic form $Q \in \Q_p[x_1,x_2,\ldots,x_n]$. The triple
  % ($c(Q)$, $d(Q)$, $n(Q)$) is invariant and unique for the
  % equivalence class of $Q$ over $\Q_p$.
\end{theorem}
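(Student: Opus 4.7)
My plan is to proceed by induction on the rank $n$. The base case $n=1$ is immediate: a form of rank~$1$ is equivalent to $d x_1^2$ for any representative $d$ of $d(Q) \in \Q_p^*/(\Q_p^*)^2$, and the Hasse-Minkowski invariant, being an empty product, automatically equals $1$.

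For the inductive step, the key reduction is to show that two rank-$n$ forms $Q$ and $Q'$ sharing the same determinant and Hasse-Minkowski invariant must represent some common element $a \in \Q_p^*$. Granted this, I would diagonalise both so as to split off a summand $\langle a \rangle$: write $Q \cong \langle a \rangle \perp Q_1$ and $Q' \cong \langle a \rangle \perp Q_1'$. A short Hilbert-symbol computation shows $d(Q_1) \equiv d(Q)/a$ modulo squares and $c(Q_1) = c(Q) \cdot (a, d(Q)/a)$, and the analogous identities hold for $Q'$, so $Q_1$ and $Q_1'$ share the same invariants. The inductive hypothesis gives $Q_1 \cong Q_1'$, and Witt's Cancellation Theorem (Theorem~\ref{Witt}) then yields $Q \cong Q'$.

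It remains to exhibit a common represented element. For $n \mygeq 5$ this is straightforward via Meyer's theorem (Theorem~\ref{CW+M}(ii)): the form $Q(x) - Q'(y)$ of rank $2n \mygeq 10$ is isotropic, yielding a nonzero pair $(x_0, y_0)$ with $Q(x_0) = Q'(y_0)$. If the common value lies in $\Q_p^*$ we are done; if it vanishes, then one of $Q, Q'$ is isotropic and by Lemma~\ref{isohyp} contains a hyperbolic plane, so it represents every element of $\Q_p^*$, making a common value trivial to produce. For $n \in \{2, 3, 4\}$, I would establish a local representation theorem characterising, in terms of $d$ and $c$, exactly which classes in $\Q_p^*/(\Q_p^*)^2$ are represented by a rank-$n$ form with prescribed invariants. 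The proof uses that $\Q_p^*/(\Q_p^*)^2$ has order $4$ (for odd $p$) or $8$ (for $p=2$), together with the non-degeneracy of the Hilbert-symbol pairing; a counting argument then forces the sets of represented elements for $Q$ and $Q'$ to overlap.

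The principal obstacle is this small-rank case, and particularly $n = 2$, where Meyer's theorem is unavailable. Here the representation problem must be attacked directly by splitting into subcases according to whether $-d$ is a square in $\Q_p^*$ (in which case $Q$ is already a hyperbolic plane and is universal on $\Q_p^*$) or not. Ensuring a uniform treatment that also accommodates $p=2$, where $|\Q_p^*/(\Q_p^*)^2|=8$, adds a further layer of careful bookkeeping with Hilbert symbols.
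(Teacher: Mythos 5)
The paper does not prove this statement itself: it simply cites Cassels \cite[p.~61]{Cassels-RQF}, as this is the classical classification of regular quadratic forms over $\Q_p$. Your outline is essentially the standard textbook proof (the one in Cassels, and in Serre's \emph{Course in Arithmetic}, Ch.~IV): induct on the rank, find a common represented value $a$, split off $\langle a\rangle$, check that the complementary forms again share determinant and Hasse invariant via the identity $c(Q) = (a, d(Q)/a)\,c(Q_1)$, and conclude by the inductive hypothesis together with Witt cancellation. The structure is sound and the invariant bookkeeping you state is correct. Two remarks. First, your case split is more conservative than necessary: the form $Q(x)-Q'(y)$ has rank $2n$, so Meyer's theorem already produces a common represented value for all $n\mygeq 3$; only $n=2$ genuinely requires the representation criterion. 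Second, that $n=2$ case is the one place where your proposal stops short of a proof: you need the explicit statement that a binary form $\langle a,b\rangle$ represents $c\in\Q_p^*$ if and only if $-ab$ is a square or $(c,-ab)=(a,b)$, from which it follows that two binary forms with equal $d$ and $c(Q)$ represent exactly the same (nonempty) set of square classes. This criterion is standard and works uniformly for $p=2$, but as written your argument only names the obstacle rather than resolving it, so the binary case should be regarded as incomplete until that lemma is supplied.
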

\begin{proof}
See \cite[p. 61]{Cassels-RQF}.
\end{proof}

The next lemma explains why we only need to consider forms of full
rank over $\Q_p$.

\begin{lemma}\label{fullrank}
  A $p$-adic integral quadratic form, with coefficients chosen
  independently from $\Z_p$ according to Haar measure, is singular
  with probability zero.
\end{lemma}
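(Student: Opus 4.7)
The plan is to reduce the problem to the standard fact that the vanishing locus of a nonzero polynomial in $\Z_p^d$ has Haar measure zero. First I would rephrase singularity as a polynomial condition. Since $\Q_p$ has characteristic $0$ (even when $p=2$), the quadratic space $(\Q_p^n,Q)$ is singular if and only if the associated bilinear form $\phi$ is degenerate, i.e.\ the Gram matrix $M(Q) = (\phi(e_i,e_j))_{i,j}$ has $\det M(Q) = 0$. The entries of $M(Q)$ are $2a_{ii}$ on the diagonal and $a_{ij}$ off-diagonal (for $i<j$), so $P := \det M(Q)$ is a polynomial in the coefficients $a_{ij}$.

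Next I would check that this polynomial is not identically zero. Taking $Q_0 = x_1^2 + \cdots + x_n^2$ gives $M(Q_0) = 2I$ and hence $P(Q_0) = 2^n$, which is nonzero in $\Q_p$ for every prime $p$. So $P$ is a nonzero polynomial with coefficients in $\Z$.

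It then remains to prove the general statement that for any nonzero polynomial $P \in \Z_p[y_1,\ldots,y_d]$, the set $Z = \{y \in \Z_p^d : P(y) = 0\}$ has Haar measure zero. I would do this by induction on $d$. For $d=1$, a nonzero polynomial over $\Q_p$ has only finitely many roots, so $Z$ is finite and thus $\mu_p(Z) = 0$. For $d > 1$, write $P = \sum_i P_i(y_1,\ldots,y_{d-1}) y_d^i$ with some $P_i$ nonzero; by the inductive hypothesis the locus $\{P_i = 0\} \subset \Z_p^{d-1}$ has measure zero, and off this locus the fiber $\{y_d \in \Z_p : P(y_1,\ldots,y_d) = 0\}$ is finite by the $d=1$ case. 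Fubini's theorem then gives $\mu_p(Z) = 0$. Applying this to our polynomial $P$ proves the lemma. There is no serious obstacle here; the only thing to be slightly careful about is the case $p=2$, where one should note that we are working over the characteristic $0$ field $\Q_2$ (so the auxiliary condition $Q(x)=0$ in the characteristic-$2$ definition of the radical does not enter), and that $2^n$ is still a unit of $\Q_2$.
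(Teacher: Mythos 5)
Your argument is correct, and it reaches the conclusion by a slightly more general route than the paper's. The paper likewise reduces singularity to the vanishing of the determinant of the Gram matrix, but instead of invoking a general ``the zero locus of a nonzero polynomial in $\Z_p^d$ is Haar-null'' lemma, it inducts directly on the number of variables $n$ of the form: writing $Q = a_{11}x_1^2 + x_1 f(x_2,\ldots,x_n) + g(x_2,\ldots,x_n)$, it observes that once $g$ is nonsingular (which holds with probability $1$ by the inductive hypothesis) the determinant is an affine-linear function of $a_{11}$ with nonzero leading coefficient, so exactly one value of $a_{11}$ makes $Q$ singular, and a Fubini argument over that coordinate finishes. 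Your version isolates the underlying principle one level of generality up (a polynomial hypersurface is null, proved by the same induction-plus-Fubini device over the $d=\binom{n+1}{2}$ coefficients), and your verification that the determinant polynomial is not identically zero via $Q_0=\sum x_i^2$, together with the remark that $\Q_2$ has characteristic $0$ so the Gram-matrix criterion applies and $2^n\neq 0$, correctly covers $p=2$. Both arguments are sound; the paper's is marginally more self-contained, since it only needs the one-variable affine-linear case of the fiberwise count, while yours yields a reusable statement for any Zariski-closed condition on the coefficients.
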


\begin{proof}
  We write the form as
  $Q(x_1,x_2,\ldots,x_n) = a_{11}x_1^2+ x_1\cdot f(x_2,\ldots,x_n) +
  g(x_2,\ldots,x_n)$ for some linear form $f \in \Z_p[x_2,\ldots,x_n]$
  and quadratic form $g \in \Z_p[x_2,\ldots,x_n]$.

  If $n=1$, the form is singular when $a_{11}$ is zero, which happens
  with probability zero. Inductively, for $n>1$, we can assume the
  form $g$ to be non-singular. For each linear form $f$ and
  non-singular form $g$, there is only one value of $a_{11}$ that
  makes $Q$ singular, corresponding to the determinant of the
  coefficient matrix being zero. This value is attained by $a_{11}$
  with probability zero, hence the form is singular with probability
  zero by induction.
\end{proof}

We now take $p$ an odd prime. The following theorem, due to Kovaleva,
gives for each triple $(n,d,c)$ the probability that a random $p$-adic
integral quadratic form $Q$ in $n$ variables has determinant
$d(Q) = d$ and Hasse-Minkowski invariant $c(Q) = c$.  Since $p$ is an
odd prime, the quotient $\Q_p^*/(\Q_p^*)^2$ has order $4$, with coset
representatives $\{1,u,p,up\}$ where $u$ is a quadratic non-residue
modulo $p$.

\begin{theorem}[Kovaleva] \label{kovaleva} Let $p$ be an odd prime,
  and let $Q\in \Z_p[x_1,x_2,\ldots,x_n]$ be a random $p$-adic
  integral quadratic form in $n$ variables.  Let $\eps$ and $s$ denote
  the Legendre symbols $(\frac{-1}{p})$ resp. $(\frac{d}{p})$ and let
  $u$ be a quadratic non-residue modulo $p$.  Then the probability
  $\mathbb{P}_n(d(Q)=d, c(Q) =c)$ that $Q$ has determinant
  $d \in \Q_p^*/(\Q_p^*)^2$ and Hasse-Minkowski invariant
  $c \in \{\pm 1\}$ is given by
\begin{equation*}
\begin{aligned}
  \mathbb{P}_{2k+1}(d(Q)=d, c(Q) =c) &= \left\{
    %\begin{split}
    \begin{aligned}
      &\frac{1}{4}\cdot\frac{p}{p+1}+ \frac{1}{4}\cdot c\cdot p^{k+1}\cdot\prod_{i=1}^{k+1}\frac{p^{2i-1}-1}{p^{2i}-1} && \text{if} \hspace{6pt}d \in \{ 1,u\};\\
      &\frac{1}{4}\cdot \frac{1}{p+1}+ \frac{1}{4}\cdot c\cdot \eps^{k}\cdot\prod_{i=1}^{k+1}\frac{p^{2i-1}-1}{p^{2i}-1} && \text{if} \hspace{6pt}d \in \{ p,up\};\\
    \end{aligned}
    %\end{split}
  \right.\\
  \mathbb{P}_{2k}(d(Q)=d, c(Q) =c) &= \left\{
    %\begin{split}
    \begin{aligned}
      &\frac{1}{4}\cdot(p^k+s\eps^{k})\cdot\left(\frac{(p^{k+2}-s\eps^{k})}{(p+1)(p^{2k+1}-1)}+ c\cdot \prod_{i=1}^{k}\frac{p^{2i-1}-1}{p^{2i}-1}\right) && \text{if} \hspace{6pt}d \in \{ 1,u\};\\
      &\frac{1}{4}\cdot\frac{p}{p+1}\cdot\frac{p^{2k}-1}{p^{2k+1}-1} && \text{if} \hspace{6pt}d \in \{ p,up\}. \\
    \end{aligned}
    %\end{split}
    \right.
\end{aligned}
\end{equation*}
\end{theorem}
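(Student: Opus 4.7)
The plan is to exploit the fact that, for $p$ odd, the $\Q_p$-equivalence class of a regular quadratic form is determined by the pair $(d(Q), c(Q)) \in \Q_p^*/(\Q_p^*)^2 \times \{\pm 1\}$, giving $8$ classes. By Lemma~\ref{fullrank} the random form $Q$ is regular almost surely, so the target is a probability measure on these $8$ states, which I would compute by recursion on $n$.

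First I would establish the recursive step. Writing $Q(x_1,\ldots,x_n) = a_{11} x_1^2 + x_1 L(x_2,\ldots,x_n) + R(x_2,\ldots,x_n)$, when $a_{11} \in \Z_p^*$ (probability $1 - 1/p$) one can complete the square over $\Z_p$ to obtain $Q \sim \langle a_{11} \rangle \perp Q'$ with $Q' = R - L^2/(4 a_{11})$, whose coefficients remain Haar-distributed on $\Z_p^{(n-1)n/2}$ conditional on $a_{11}$ and $L$. The multiplicativity identities
\[
d(Q) \equiv a_{11} \cdot d(Q') \pmod{(\Q_p^*)^2}, \qquad c(Q) = c(Q') \cdot (a_{11}, d(Q')),
\]
then translate into a linear update on the $8$-state probability vector, after further conditioning on the square class of $a_{11}$ in $\Z_p^*$ (uniform in $\{1,u\}$ since reduction mod $p$ is uniform on $\F_p^*$). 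The residual event $a_{11} \in p\Z_p$ is handled by pivoting to another diagonal entry (or, if all coefficients lie in $p\Z_p$, by factoring out $p$ and recurring on $\widetilde{Q} = p^{-1}Q$); summing over the $p$-adic valuation contributes geometric series in $p^{-1}$, which are the source of the $\tfrac{p}{p+1}$ and $\tfrac{1}{p+1}$ factors in the statement.

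Next I would solve the resulting linear recurrence. The transfer operator respects the group $\Q_p^*/(\Q_p^*)^2 \cong (\Z/2\Z)^2$ acting on the $d$-coordinate, so it diagonalises under characters: the trivial character produces the $\tfrac{p}{p+1}$-type terms, and the non-trivial characters produce the products $\prod_{i=1}^{k+1} \tfrac{p^{2i-1}-1}{p^{2i}-1}$ as iterated ratios of the recursion coefficients. The Legendre symbols $\eps = \left(\tfrac{-1}{p}\right)$ and $s = \left(\tfrac{d}{p}\right)$ enter because the Hilbert-symbol identities $(p,p) = \eps$, $(u,u) = 1$, $(p,u) = -1$ control the sign changes. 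Parity of $n$ enters because rescaling all coefficients by $p$ multiplies $d(Q)$ by $p^n$, which is a square if and only if $n$ is even, so the self-similar relation couples different $(d,c)$-strata when $n$ is odd but not when $n$ is even.

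The main obstacle is the careful bookkeeping of the Hilbert symbol through iterated diagonalisation: because $(a_{11}, d(Q'))$ couples the scalar $a_{11}$ to the determinant of the recursive output, the update is not a simple tensor product on $(d,c)$ and one must track the joint distribution rather than the marginals. Disentangling this coupling and verifying the closed form against small-$n$ initial conditions is what produces the separate parity cases ($n = 2k$ versus $n = 2k+1$) and the separate determinant cases ($d \in \{1,u\}$ versus $d \in \{p,up\}$) visible in the statement.
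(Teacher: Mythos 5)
You should first be aware that the paper does not prove this theorem: its proof is the single citation ``See \cite[Theorem 1.3]{K}'', and the statement is used as a black box (Appendix~\ref{app} then adapts Kovaleva's method to a related but different recurrence). Your outline is broadly the strategy that Kovaleva's own proof, and the paper's appendix, actually follow --- set up a transfer operator on the finitely many $\Q_p$-equivalence classes, exploit the character group of $\Q_p^*/(\Q_p^*)^2$ to decouple it, and use the self-similarity coming from dividing out by $p$ --- and the individual identities you invoke ($d(Q)=a_{11}d(Q')$, $c(Q)=(a_{11},d(Q'))\,c(Q')$, $(p,p)=\eps$, $(u,u)=1$, $(p,u)=-1$, and the parity role of $p^n$) are all correct. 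So the programme is viable.

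As written, however, there are two genuine gaps. First, the recursive step is not sound when $a_{11}\in p\Z_p$. ``Pivoting to another diagonal entry'' destroys the product-Haar structure you rely on: after conditioning on $a_{11}\equiv 0 \pmod p$ and completing the square in $x_2$, the new coefficient of $x_1^2$ is $a_{11}-a_{12}^2/(4a_{22})$, whose reduction mod $p$ is $-a_{12}^2/(4a_{22})$, i.e.\ a fixed multiple of a random \emph{square} in $\F_p$ --- not uniform, and correlated with data you have discarded. Nor does the dichotomy ``pivot or factor out $p$'' cover the case where every diagonal entry is divisible by $p$ but some off-diagonal entry is not. The clean recursion (Kovaleva's, and the one in Section~\ref{sec:method1} here, cf.\ Lemma~\ref{recrel}) instead conditions on the rank $r$ of the reduction mod $p$, splits off a unimodular block of rank $r$ in a single step, and divides the complement by $p$; this requires as input the distribution $\lambda(r,n)$ of the rank of a random symmetric matrix over $\F_p$, i.e.\ the counting results of Section~\ref{sec:count}, which your sketch never supplies. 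Second, solving the resulting recurrence is the substantive content of the theorem and is not carried out: because the recursion relates $n$ to \emph{all} of $n-r$ with $0\myleq r\myleq n$ rather than just to $n-1$, verifying the closed forms amounts to proving nontrivial summation identities over $r$ (the analogues of Lemmas~\ref{lem:id1}, \ref{lem:os} and \ref{lem:id2}), and ``verifying against small-$n$ initial conditions'' does not substitute for exhibiting eigenfunctions of the transfer operator. Until these two points are filled in, the proposal is a plausible plan of attack rather than a proof.
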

\begin{proof}
See \cite[Theorem 1.3]{K}.
\end{proof}

We deduce Theorem~\ref{thm:main} for $p$ odd using the following
lemma.  Recall that we wrote $\rho_p(k,n)$ for the probability that a
random $p$-adic integral quadratic form in $n$ variables is
$k$-isotropic.

\begin{lemma}
  \label{rho-P}
  We have $\rho_p(k,n) = 0$ for $n \myleq 2k-1$ and $\rho_p(k,n) = 1$
  for $n \mygeq 2k+3$. If $p$ is odd then
  \begin{align*}
    \rho_p(k,2k) &= \mathbb{P}_{2k}(d(Q)=(-1)^k, c(Q) = 1); \\
    \rho_p(k,2k+1) &= \sum_{a \in \Q_p^*/(\Q_p^*)^2}
           \mathbb{P}_{2k+1}(d(Q)=(-1)^k a , c(Q) = (-1,a)^k); \\
    \rho_p(k,2k+2) &= 1 - \mathbb{P}_{2k+2}(d(Q)=(-1)^{k-1} , c(Q) = -1).
  \end{align*}
\end{lemma}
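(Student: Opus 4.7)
The plan is to combine Lemma~\ref{fullrank} with the Witt decomposition of forms over $\Q_p$ and the standard fact that over $\Q_p$ every anisotropic form has dimension at most $4$. By Lemma~\ref{fullrank} it suffices throughout to treat forms $Q$ of full rank $n$. Such a form admits a decomposition $Q \cong w H \oplus Q_{\mathrm{anis}}$, where $H$ is a hyperbolic plane, $w$ is the Witt index, and $\dim Q_{\mathrm{anis}} = n - 2w$ is at most $4$. The condition of $k$-isotropy is then equivalent to $w \mygeq k$.

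The two trivial ranges follow immediately: if $n \myleq 2k-1$ then $w \myleq n/2 < k$, so $\rho_p(k,n) = 0$; and if $n \mygeq 2k+3$ then $2w \mygeq n - 4 \mygeq 2k - 1$, forcing $w \mygeq k$ and hence $\rho_p(k,n) = 1$.

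For the three middle dimensions I would identify which $\Q_p$-equivalence classes of rank-$n$ forms are $k$-isotropic, and then read off the probability from the classification by $(d(Q), c(Q))$ together with Kovaleva's formulae. In dimension $n = 2k$ the only possibility is $Q \cong kH$, a unique equivalence class with $d = (-1)^k$ and $c = 1$, yielding the first formula. In dimension $n = 2k+1$ the anisotropic part has odd dimension $1$ or $3$, and $k$-isotropy forces dimension~$1$, so $Q \cong kH \oplus \langle a \rangle$ for some $a \in \Q_p^*/(\Q_p^*)^2$; the four resulting classes have $d = (-1)^k a$ and $c = (-1,a)^k$, and summing the corresponding Kovaleva probabilities gives the second formula. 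In dimension $n = 2k+2$ the form is \emph{not} $k$-isotropic precisely when the anisotropic kernel has dimension exactly $4$, in which case it must be the unique $4$-dimensional anisotropic form $Q_4$ over $\Q_p$ (with $d(Q_4) = 1$ and $c(Q_4) = -1$); thus the sole offending class is $(k-1)H \oplus Q_4$, with $d = (-1)^{k-1}$ and $c = -1$, giving the third formula.

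The main obstacle is the bookkeeping of $d$ and $c$ under orthogonal sums, via the standard identity $c(Q_1 \oplus Q_2) = c(Q_1) c(Q_2) (d(Q_1), d(Q_2))$. The key non-trivial verification is that $c(kH) = 1$ for all $k \mygeq 0$, which by induction reduces to the identity $(-1,-1) = 1$ for every odd prime~$p$; this in turn holds because $\Q_p(\sqrt{-1})/\Q_p$ is unramified and $-1$ is a unit, so $-1$ is a norm.
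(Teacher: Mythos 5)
Your proposal is correct and follows essentially the same route as the paper: reduce to regular forms via Lemma~\ref{fullrank}, split off hyperbolic planes so that the anisotropic part has dimension at most $4$ by Meyer's theorem, identify the relevant $\Q_p$-equivalence classes in each of the dimensions $2k$, $2k+1$, $2k+2$, and read off their invariants $(d,c)$. The only difference is that you spell out the Hasse-invariant bookkeeping (the formula for $c(Q_1\oplus Q_2)$ and the verification $c(kH)=1$) which the paper simply asserts, citing Cassels for the invariants of the rank-$4$ anisotropic form.
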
  
\begin{proof}
  We first note that if $n \myleq 2k-1$ then every $k$-isotropic form
  of dimension $n$ is singular, and so $\rho_p(k,n)=0$ by
  Lemma~\ref{fullrank}.  We now suppose that $n \mygeq 2k$.  By
  Lemma~\ref{isohyp} every regular quadratic form of dimension $n$
  over $\Q_p$ is equivalent to one of the form
  \begin{equation}\label{newhypform}
    Q(x_1,\ldots,x_n) = \sum_{i=1}^l x_{2i-1} x_{2i} + f(x_{2l+1},\ldots,x_{2l+m}),
  \end{equation}  
  where $l$ is the number of hyperbolic planes in the decomposition,
  $f(x_{2l+1},\ldots,x_{2l+m})$ is an anisotropic form over $\Q_p$ of
  rank $m$, and $n = 2l + m$. By Theorem~\ref{CW+M}(ii) we have
  $m \myleq 4$. It follows that if $n \mygeq 2k+3$ then $k \myleq l$
  and so $\rho_p(k,n)=1$.

  We now take $p$ an odd prime. If $n = 2k$ then for the form 
  in~\eqref{newhypform} to be $k$-isotropic we
  need $l=k$ and $m=0$. There is only one such form up to
  $\Q_p$-equivalence. It has determinant $d(Q) = (-1)^k$ and
  Hasse-Minkowski invariant $c(Q) = 1$. This gives the formula for
  $\rho_p(k,2k)$ as stated. If $n = 2k+1$ then for $k$-isotropy we
  need $l=k$ and $m=1$. The anisotropic form in~\eqref{newhypform} is
  $f(x_n) = a x_n^2$ for some $a \in \{1,u,p,up\}$. This gives four
  $\Q_p$-equivalence classes of forms, with invariants
  $d(Q) = (-1)^ka$ and $c(Q) = (-1,a)^k$. Finally we take $n =
  2k+2$. For $Q$ {\em not} to be $k$-isotropic we need $l = k-1$ and
  $m = 4$. The rank $4$ anisotropic form $f$ has determinant
  $d(f) = 1$ and Hasse-Minkowski invariant $c(f) = -1$ (see
  \cite[p. 59]{Cassels-RQF}). It follows that $d(Q) = (-1)^{k-1}$ and
  $c(Q) = -1$, giving the result as stated.
\end{proof}

Theorem~\ref{thm:main} for $p$ odd now follows from
Theorem~\ref{kovaleva} and Lemma~\ref{rho-P}. The interesting thing to
note is that the Legendre symbols $\eps$ and $s$ cancel, giving
answers that are rational functions in $p$. Indeed when $n=2k$ we have
$s \eps^k = 1$. When $n=2k+1$ the contributions for $a \in \{1,u\}$
have $c = 1$ and the contributions for $a \in \{p,up\}$ have
$c = \eps^k$. When $n = 2k+2$ we employ the corresponding formula in
Theorem~\ref{kovaleva} with $k$ replaced by $k+1$ and $s$ replaced by
$\eps^{k-1}$.

We saw in Proposition~\ref{cansolve} that $\rho_p(k,n)$ is given by a
rational function in $p$, where the same rational function works for
all primes $p$ including the prime $p=2$. Since we proved
Theorem~\ref{thm:main} in the last paragraph for infinitely many
primes (in fact for all odd primes), the theorem is therefore true for all
primes.

\appendix

\section{Solving the recurrence relations for the first method}
\label{app}

In this appendix we prove Proposition~\ref{prop:solve-recur}.  This is
not needed for the proof of our main theorems as stated in the
introduction, but is needed to compute all the $\delta_i(k;l,m,n)$
(see Theorem~\ref{thm:deltas}) and hence to see that they satisfy some
interesting symmetries (see Corollary~\ref{cor:p<->1/p}).  The proof
is based on that of Theorem~\ref{kovaleva}, but we could
not see a way to directly cite Kovaleva's work without reworking all
the details. %Hence the need for this appendix.

The identities we seek to prove are ones between rational functions in
$p$.  So it suffices to prove them for any infinite set of primes.
There is therefore no loss of generality in assuming (as we now do)
that $p$ is odd. This allows us to identify quadratic forms and
symmetric matrices in the usual way.

\begin{definition} Let $\lambda(r,n)$ be the probability that a
  randomly chosen $n \times n$ symmetric matrix over $\F_p$ has rank
  $r$. This is computed in \cite[Corollary 4.2]{K}. 
  In the notation of Section~\ref{sec:count} we have
  \begin{equation} \label{lambdas}
  \lambda(r,n) =  \left\{ \begin{array}{ll} \pi_0(\myell,0,n) +
    \pi_0(\myell-1,2,n) &
   \text{ if } r = 2 \myell; \\ \pi_0(\myell,1,n)
   & \text{ if } r = 2 \myell + 1.  \end{array} \right.
   \end{equation}
\end{definition}

\begin{lemma}
  \label{lem:id1}
  For any $x \in \R$ we have
  \[\sum_{\substack{r=0 \\ n-r \text{ even }}}^n \lambda(r,n) \frac{x-p^r}{p^{n+1}-p^r}
    \,\,+ \sum_{\substack{r=0 \\ n-r \text{ odd }}}^n \lambda(r,n) = \left\{
  \begin{array}{ll} \vspace{1ex} \displaystyle\frac{x-1}{p^{n+1}-1}
    & \text{ if $n$ is even;} \\
    \displaystyle\frac{x}{p^{n+1}} & \text{ if $n$ is odd}.
  \end{array} \right. \]
\end{lemma}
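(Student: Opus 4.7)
The plan is to use that both sides of the identity are affine functions of $x$. Evaluating at $x = p^{n+1}$: on the left-hand side, each fraction in the first sum collapses to $1$, and the total becomes $\sum_r \lambda(r,n) = 1$; on the right-hand side, both cases simplify to $1$ as well. Writing therefore the left-hand side as $(x - p^{n+1}) A_n + 1$, where $A_n$ is the coefficient of $x$, the identity reduces to showing
\[
A_n \;:=\; \sum_{\substack{r=0 \\ n - r \text{ even}}}^n \frac{\lambda(r,n)}{p^{n+1}-p^r} \;=\; \begin{cases} 1/(p^{n+1}-1) & \text{if $n$ is even,} \\ 1/p^{n+1} & \text{if $n$ is odd.} \end{cases}
\]

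To prove this reduced identity, I would substitute the explicit formulas from Lemma~\ref{pi0} via~\eqref{lambdas}, which after simplification express $\lambda(r,n)$, uniformly for $r \myleq n$, as
\[
\lambda(r,n) \;=\; \frac{p^{\lfloor r/2 \rfloor (\lfloor r/2 \rfloor + 1)}}{p^{n(n+1)/2}} \cdot \prod_{i=0}^{r-1}\frac{p^n - p^i}{p^r - p^i} \cdot \prod_{i=1}^{\lceil r/2 \rceil}(p^{2i-1}-1).
\]
Inserting this into $A_n$ yields a rational identity in $p$ involving Gaussian binomial coefficients. I would verify this by induction on $k$, where $n = 2k$ or $n = 2k+1$, using the Pascal-type recurrences for Gaussian binomials, and splitting the two parity cases. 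Small-$n$ checks (say $n \myleq 3$) confirm the two closed forms and suggest the inductive pattern.

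The main obstacle is the $q$-series bookkeeping: the identity does not collapse to a single well-known Gaussian identity, so careful telescoping is required. An alternative route is to induct on $n$ using the extension recurrence
\[
\lambda(r, n+1) \, p^{n+1} \;=\; \lambda(r, n) p^r + \lambda(r-1, n) p^{r-1}(p-1) + \lambda(r-2, n)(p^n - p^{r-2})\, p,
\]
arising from adjoining a random row and column to an $n \times n$ symmetric matrix (whose rank can increase by $0$, $1$, or $2$ with explicit probabilities); however, the shift by $r \mapsto r-1$ flips the parity of $n - r$, so this approach mixes $A_{n+1}$ with the complementary sum $\sum_{n+1-r \text{ odd}} \lambda(r,n+1)/(p^{n+2}-p^r)$ and would require proving both simultaneously. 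Either way, this is the analog in our setting of the computation Kovaleva performs in the proof of Theorem~\ref{kovaleva}, which as the authors note must be reworked to suit the present identity.
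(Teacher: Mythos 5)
Your setup is sound and coincides with the paper's first step: both sides are affine in $x$, and evaluating at $x=p^{n+1}$ reduces the constant term to the identity $\sum_{r=0}^n\lambda(r,n)=1$. Your closed formula for $\lambda(r,n)$ is also correct. The problem is that the entire content of the lemma now sits in the slope identity for $A_n$, and you do not prove it: ``I would verify this by induction on $k$ \dots careful telescoping is required'' is a plan, not an argument, and you yourself flag that the identity does not collapse to a standard Gaussian-binomial identity. As written, the proof has a genuine gap exactly at its only nontrivial step; the alternative route via the rank-extension recurrence is likewise only sketched and, as you note, couples two identities together.

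The gap can be closed with no induction at all, and this is what the paper does. Use linearity in $x$ a second time, but instead of extracting the coefficient of $x$, evaluate at $x=1$ when $n=2k$ (resp.\ $x=0$ when $n=2k+1$), so that the right-hand side vanishes. The left-hand side then groups into consecutive pairs
\[ \lambda(2s,n)\,\frac{1-p^{2s}}{p^{n+1}-p^{2s}} \;+\; \lambda(2s-1,n), \qquad s=1,\dots,k \]
(resp.\ $\lambda(2s,n)-\lambda(2s+1,n)\,p^{2s}/(p^{n+1}-p^{2s})$), and each pair vanishes individually: from your own formula for $\lambda(r,n)$, the factors $\prod_i(p^{2i-1}-1)$ agree for $r=2s-1$ and $r=2s$, so
\[ \frac{\lambda(2s-1,n)}{\lambda(2s,n)} \;=\; p^{-2s}\,\frac{N(2s-1,n)}{N(2s,n)}
 \;=\; \frac{p^{2s}-1}{p^{2s}\,(p^{n-2s+1}-1)} \;=\; \frac{p^{2s}-1}{p^{n+1}-p^{2s}}. \]
This termwise two-term cancellation is the ``telescoping'' you anticipated, but it requires no induction and no global $q$-series manipulation. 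Replacing your unexecuted induction by this observation completes your first route and recovers the paper's proof.
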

\begin{proof} Since each side is linear in $x$, it suffices to prove
  the identity for just two values of $x$. If $x = p^{n+1}$ then this
  is just the fact that $\sum_{r=0}^n \lambda(r,n) = 1$. If $n = 2k$
  and $x=1$ then the left hand side is
  \[ \sum_{s=1}^k \left( \lambda(2s,n) \frac{1 - p^{2s}}{p^{n+1}-p^{2s}}
    + \lambda(2s-1,n) \right) \]
  whereas if $n=2k+1$ and $x=0$ then the left hand side is
  \[ \sum_{s=1}^k \left( \lambda(2s,n) - \lambda(2s+1,n)
    \frac{p^{2s}}{p^{n+1}-p^{2s}} \right). \]
  It may be checked using~\eqref{lambdas} %the formulae in Section~\ref{sec:count}
  that in each of these last two sums all the summands are zero.
\end{proof}

\begin{lemma}
  \label{lem:os}
  Let $\pi_n = \prod_{i=1}^n (1 - p^{-i})$. Then for any $m,n \mygeq 0$
  we have
  \[ \sum_{r=0}^{\min(m,n)} \frac{\pi_m \pi_n}{p^{(m-r)(n-r)}
      \pi_r \pi_{m-r} \pi_{n-r}} = 1. \]
\end{lemma}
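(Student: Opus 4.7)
The plan is to interpret the identity as the tautology that every $m \times n$ matrix over $\F_p$ has a well-defined rank in $\{0, 1, \ldots, \min(m,n)\}$. Let $M(m,n,r)$ denote the number of rank-$r$ matrices in $\F_p^{m \times n}$. Since every matrix has exactly one rank, $\sum_{r=0}^{\min(m,n)} M(m,n,r) = p^{mn}$ trivially, so the lemma will follow once we show that the $r$-th summand of the stated identity equals $M(m,n,r)/p^{mn}$.

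For the rank count I would use the standard bijective argument. A rank-$r$ matrix is determined by its column span $W \subseteq \F_p^m$ (of dimension $r$, contributing a factor of $\binom{m}{r}_p$, the number of such subspaces) together with a surjective linear map $\F_p^n \twoheadrightarrow W$. The surjections are in bijection with $r \times n$ matrices of full row rank, which are counted by $\binom{n}{r}_p \cdot |\GL_r(\F_p)|$. Combining this with the classical formula $|\GL_r(\F_p)| = p^{r(r-1)/2} \prod_{i=1}^r (p^i - 1)$ gives
\[
M(m,n,r) = \binom{m}{r}_p \binom{n}{r}_p \cdot p^{r(r-1)/2} \prod_{i=1}^r (p^i - 1).
\]

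The remaining step is a routine verification: substitute $\pi_k = p^{-k(k+1)/2} \prod_{i=1}^k (p^i - 1)$ into the summand
\[
\frac{\pi_m \pi_n}{p^{(m-r)(n-r)} \pi_r \pi_{m-r} \pi_{n-r}}
\]
and collect the products of $(p^i - 1)$ and the powers of $p$ separately. The $(p^i - 1)$-factors immediately combine into $\binom{m}{r}_p \binom{n}{r}_p \prod_{i=1}^r(p^i - 1)$, and a short calculation with the $k(k+1)/2$ exponents, using $mn - (m-r)(n-r) = r(m+n-r)$ and the identity $(k-r)(k-r+1)/2 - k(k+1)/2 = -kr + r(r-1)/2$ for $k \in \{m,n\}$, shows the net power of $p$ is $r(r-1)/2 - mn$. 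This exactly gives $M(m,n,r)/p^{mn}$, and summing over $r$ yields $1$. The only (minor) obstacle is the $p$-exponent bookkeeping; there is no conceptual difficulty, and the argument is purely combinatorial.
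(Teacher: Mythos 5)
Your proof is correct and rests on the same key idea as the paper's: the $r$th summand is the probability that a random $m\times n$ matrix over $\F_p$ has rank $r$, so the sum telescopes to $1$. The only difference is cosmetic --- you count rank-$r$ matrices via the column-span/surjection bijection, while the paper (following Kovaleva) uses the orbit-stabiliser theorem for the $\GL_m(\F_p)\times\GL_n(\F_p)$ action $X\mapsto AXB^T$; both yield the same count and your exponent bookkeeping checks out.
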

\begin{proof}
  This is \cite[Corollary 2.5]{K}. The $r$th summand is the
  probability that an $m \times n$ matrix over $\F_p$ has
  rank~$r$. This may be computed by considering the action of
  $\GL_m(\F_p) \times \GL_n(\F_p)$ via $(A,B): X \mapsto A X B^T$ and
  applying the orbit-stabiliser theorem.
\end{proof}

We define 
\begin{equation}
  \label{eqn:AB}
  A(n) = \prod_{i=1}^{ \lfloor \frac{n+1}{2} \rfloor} \frac{p^{2i}-p}{p^{2i} -1},
      \qquad 
  B(n) = \prod_{i=1}^{ \lfloor \frac{n+1}{2} \rfloor} \frac{p^{2i-1}-1}{p^{2i} -1}.
\end{equation}
  
\begin{lemma}
  \label{lem:id2}
  For any $x,y,z \in \R$ we have
  \begin{align*}
\sum_{\substack{r=0 \\ n-r \text{ even }}}^n & \lambda(r,n)B(n-r) (x + y p^{n-r})
\,\,+ \sum_{\substack{r=0 \\ n-r \text{ odd }}}^n \lambda(r,n)B(n-r)(x + z  p^{n+1-r})
  \\ & = \left\{
 \begin{array}{ll} \vspace{1ex} \left( x +  y
 + \left(1 - \displaystyle\frac{1}{p^{n}} \right) z \right) A(n) % p^{n/2} B(n)
  & \text{ if $n$ is even;} \\
  \left(x + \left(1 - \displaystyle\frac{1}{p^{n+1}} \right)y + z \right) A(n)
  % p^{(n+1)/2} B(n)
  & \text{ if $n$ is odd}.
  \end{array} \right. \end{align*}
\end{lemma}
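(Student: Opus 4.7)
The strategy mirrors the proof of Lemma~\ref{lem:id1}: both sides of the identity are affine-linear in the three parameters $(x, y, z) \in \R^3$, so it suffices to verify the identity at three linearly independent specialisations. I would take the natural basis $(x, y, z) \in \{(1, 0, 0),\, (0, 1, 0),\, (0, 0, 1)\}$, reducing the claim to three identities for the sums
\[ S_0 = \sum_{r=0}^n \lambda(r,n) B(n-r), \qquad S_1 = \sum_{\substack{r=0 \\ n-r \text{ even}}}^n \lambda(r,n) B(n-r) p^{n-r}, \qquad S_2 = \sum_{\substack{r=0 \\ n-r \text{ odd}}}^n \lambda(r,n) B(n-r) p^{n+1-r}. \]
Matching coefficients on the right-hand side, the three claims are that $S_0 = A(n)$ in all cases, that $S_1$ equals $A(n)$ or $(1 - p^{-n-1}) A(n)$ according as $n$ is even or odd, and that $S_2$ equals $(1 - p^{-n}) A(n)$ or $A(n)$ in the same two cases.

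For each identity I would substitute the explicit expressions for $\lambda(r,n)$ from~\eqref{lambdas} together with the formulas of Lemma~\ref{pi0}. After substitution, much of the product defining $B(n-r)$ cancels against factors appearing in $\pi_0(\ell, m, n)$, leaving a compact weighted sum that is essentially a Gaussian-binomial-type sum times $A(n)$. I would split each sum according to the parity of $r$, treating the two cases in~\eqref{lambdas} separately; this naturally interacts with the parity restriction on $n - r$ appearing in $S_1$ and $S_2$.

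The main obstacle I anticipate is that the evaluated sums will not quite be direct instances of Lemma~\ref{lem:os} but rather shifts or specialisations of it. I would try to massage each reduced sum into an instance of Lemma~\ref{lem:os} by a suitable change of summation index; failing that, induction on $n$, using a recurrence for $\lambda(r, n)$ obtained by conditioning on the first row of the symmetric matrix (as in the proof of Lemma~\ref{fullrank}), should succeed. A further subtlety is that Lemma~\ref{lem:id2} treats the two parities of $n$ asymmetrically, so the case analysis must be done with care; fortunately the asymmetry in $S_1$ and $S_2$ mirrors that of the right-hand side, and I expect $S_0 = A(n)$ to be the heart of the computation, with the $S_1$ and $S_2$ identities following by analogous manipulations carrying the extra factor $p^{n-r}$ or $p^{n+1-r}$.
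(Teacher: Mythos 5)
Your overall strategy---affine linearity in $(x,y,z)$ plus verification at three independent specialisations---is exactly the paper's, and your target identities for $S_1$ and $S_2$ are precisely the two identities the paper isolates as Lemma~\ref{lem:twoid}: after substituting~\eqref{lambdas} and the formulae of Lemma~\ref{pi0}, each of those two weighted sums collapses (after reindexing by $\myell = n-r$ and then $r=2s$ resp.\ $r=2s+1$) to an instance of Lemma~\ref{lem:os} with $p$ replaced by $p^2$, so the ``shift/specialisation'' you anticipate is exactly right there. The one genuine divergence is your third basis vector. The paper never evaluates $S_0=\sum_r \lambda(r,n)B(n-r)$ head-on: instead of $(1,0,0)$ it takes $(x,y,z)=(1,-1,0)$ when $n$ is even and $(1,0,-1)$ when $n$ is odd, for which the right-hand side is $0$ and the left-hand side vanishes because consecutive summands cancel in pairs---for $n$ even one checks from~\eqref{lambdas} and Lemma~\ref{pi0} that $\lambda(2s+1,n)B(n-2s-1)=(p^{n-2s}-1)\,\lambda(2s,n)B(n-2s)$, exactly the mechanism used in the proof of Lemma~\ref{lem:id1}. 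Combined with the $S_1$ (resp.\ $S_2$) identity this gives $S_0=A(n)$ for free. By contrast, your plan to prove $S_0=A(n)$ directly is the weak point of the proposal: without the weight $p^{n-r}$ the exponent bookkeeping no longer matches Lemma~\ref{lem:os} even after a change of index (the two parity classes of $S_0$ do not individually have clean closed forms), so you really would be pushed into the fallback induction you mention---and note that your expectation that $S_0$ is ``the heart of the computation'' with $S_1,S_2$ following analogously has the emphasis reversed: it is $S_1$ and $S_2$ that reduce cleanly to the orbit-counting identity, and $S_0$ that is best obtained from them. Replacing $(1,0,0)$ by the cancelling combination turns that last case into a two-line check and completes your argument.
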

\begin{proof}
  It suffices to prove this identity for three linearly independent
  choices of $(x,y,z)$.  If $(x,y,z) = (1,-1,0)$ and $n$ is even or
  $(x,y,z) = (1,0,-1)$ and $n$ is odd then the terms with $r = 2s$ and
  $r = 2s+1$ cancel, giving the result in these cases. The proof is
  completed by the next lemma which proves the cases
  $(x,y,z) = (0,1,0)$ and $(x,y,z)=(0,0,1)$.
\end{proof}

\begin{lemma}
  \label{lem:twoid}
  We have
  \[ \sum_{\substack{\myell=0 \\ \myell \text{ even }}}^n \lambda(n-\myell,n)
    B(\myell) p^\myell = \left\{ \begin{array}{ll}
    A(n) %p^{n/2} B(n)
    & \text{ if $n$ is even; } \\
    \left(1 - \frac{1}{p^{n+1}} \right) A(n) %p^{(n+1)/2} B(n)
     & \text{ if $n$ is odd, }
    \end{array} \right. \]
     and                    
  \[ \sum_{\substack{\myell=0 \\ \myell \text{ odd }}}^n \lambda(n-\myell,n)
    B(\myell) p^{\myell+1} = \left\{ \begin{array}{ll}
    \left(1 - \frac{1}{p^{n}} \right) A(n) % p^{n/2} B(n)
    & \text{ if $n$ is even; } \\
  A(n) %p^{(n+1)/2} B(n)
    & \text{ if $n$ is odd. } 
     \end{array} \right. \]                   
\end{lemma}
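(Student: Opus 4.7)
The plan is to verify both identities by reducing each of the four cases — even/odd $n$ combined with even/odd $\ell$ — to an elementary telescoping sum. Throughout I use the explicit expressions for $\lambda(r,n)$ supplied by \eqref{lambdas} together with Lemma \ref{pi0}, and abbreviate the two left-hand sides as $T_e(n)$ and $T_o(n)$.

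A first step is to rewrite the expression $\lambda(2s,n) = \pi_0(s,0,n) + \pi_0(s-1,2,n)$ as a single product. The two $\pi_0$-formulas in Lemma \ref{pi0} differ essentially only in their anisotropic factors, $(p^s-1)$ versus $(p^s+1)$; bringing them to a common denominator and using $(p^s-1)(p^s+1) = p^{2s}-1$ yields a clean single-product expression for $\lambda(2s,n)$. This mirrors the usual ``sum over both orthogonal types'' count and makes $\lambda(r,n)$ uniformly expressible for each fixed parity of $r$.

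Next I would divide each identity by $A(n)$. After this division and the substitution $\ell = n - 2s$ (resp.\ $\ell = n - 2s - 1$) for $T_e(n)$ (resp.\ $T_o(n)$), each summand becomes a single rational function in $p^n$ and $p^s$, and I expect to produce a sequence $(u_s)$ for which the summand equals $u_s - u_{s+1}$. The sum then collapses to $u_0 - u_{\text{top}}$, and the two boundary values yield the stated right-hand sides $A(n)$ and $(1 - 1/p^{n\pm 1}) A(n)$, the small discrepancy $1/p^{n\pm 1}$ being the top-boundary contribution corresponding to the regular case.

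The main obstacle is bookkeeping: the factor $p^{n(n+1)/2}$ in the denominator of $\pi_0$ and the shifted products $\prod_{i=0}^{r-1}(p^n - p^i)$ in the numerator interact non-trivially with the ratios of $(p^{2i}-1)$ terms in $A(n)$ and $B(\ell)$, so isolating the telescoping sequence $(u_s)$ requires careful index-matching. An alternative plan — likely cleaner — is induction on $n$: the ratios $A(n)/A(n-2)$ and $B(n)/B(n-2)$ are each a single explicit factor, and $\lambda(r,n)$ can be related to $\lambda(r,n-2)$ and $\lambda(r-2,n-2)$ by peeling off a $2 \times 2$ top-left block, so that the identity for $n$ reduces to its $(n-2)$-analogue, with the base cases $n=0,1$ checked by direct inspection.
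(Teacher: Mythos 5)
Your plan correctly identifies the normalisation (divide by $A(n)$ and reindex), but the step that would actually prove the lemma --- exhibiting an explicit sequence $(u_s)$ with summand $=u_s-u_{s+1}$ and readable boundary values --- is only ``expected'', and there is concrete evidence it cannot be carried out in the form you describe. After dividing by $A(n)$, the $s$th summand is precisely the probability that a random $k\times k$ (resp.\ $k\times(k+1)$) matrix over $\F_{p^2}$ has rank $s$, so your $u_s$ would have to be a closed form for $\mathbb{P}(\mathrm{rank}\mygeq R)$ or $\mathbb{P}(\mathrm{rank}\myleq R)$. These cumulative probabilities do not factor into the cyclotomic-type products your boundary-value argument needs: for a $2\times 2$ matrix over $\F_q$ one gets $\mathbb{P}(\mathrm{rank}\myleq 1)=q^{-3}(q^2+q-1)$, and for a $3\times 3$ matrix $\mathbb{P}(\mathrm{rank}\mygeq 2)$ acquires an irreducible factor $q^3+q+1$. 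The total sum is a $q$-Vandermonde-type identity, which is exactly the kind of sum that is not indefinitely (telescopically) summable. The missing ingredient is Lemma~\ref{lem:os}: the paper rewrites $\lambda(n-\myell,n)B(\myell)p^{\myell}$ in terms of $\pi_n=\prod_{i=1}^n(1-p^{-i})$ and $\beta_{2n}=\prod_{i=1}^n(1-p^{-2i})$, extracts the prefactor $\pi_n/\beta_{2k}^2$ (resp.\ $\pi_n/(\beta_{2k}\beta_{2k+2})$), and identifies the remaining sum as $\sum_r\mathbb{P}(\mathrm{rank}=r)=1$ for an $m\times n$ matrix over $\F_{p^2}$, an identity imported from Kovaleva rather than proved by telescoping. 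Your proof needs either that identity or an equivalent substitute; without it the argument does not close.

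Your fallback induction $n\mapsto n-2$ also has a gap as stated. Peeling off a $2\times 2$ block does not yield a two-term recursion expressing $\lambda(r,n)$ in terms of $\lambda(r,n-2)$ and $\lambda(r-2,n-2)$: the block can itself have rank $0$, $1$ or $2$, and when it is singular the rank of the whole matrix is not the rank of the block plus the rank of a Schur complement, so a correct recursion necessarily involves $\lambda(r-1,n-2)$ as well (this is in effect what the quantities $\pi_1,\pi_2$ of Section~\ref{sec:count} keep track of). Even with the corrected recursion, verifying that the right-hand sides $A(n)$ and $(1-p^{-n\pm 1})A(n)$ satisfy the same recursion amounts to an identity of the same strength as Lemma~\ref{lem:os}, so this route does not avoid the missing input either.
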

\begin{proof} As before let $\pi_n = \prod_{i=1}^n (1 - p^{-i})$. We put
  $\beta_{2n} = \prod_{i=1}^n (1 - p^{-2i})$.
  If $n=2k$ or $2k+1$ then
  \begin{align*} 
    \sum_{\substack{\myell=0 \\ \myell \text{ even }}}^n \lambda(n-\myell,n)
    B(\myell) p^\myell &= \sum_{\substack{\myell=0 \\ \myell \text{ even }}}^{2k}
    \frac{\pi_n}{p^{\binom{l+1}{2}} \pi_\myell \beta_{2k - \myell}}
    \cdot \frac{\pi_\myell}{p^{\myell/2} \beta_\myell^2} \cdot p^\myell \\
    &= \frac{\pi_n}{\beta_{2k}^2}
    \sum_{\substack{\myell=0 \\ \myell \text{ even }}}^{2k}
    \frac{\beta_{2k}^2}{p^{\myell^2/2} \beta_{2k-\myell} \beta_\myell^2} \\
    &= \frac{\pi_n}{\beta_{2k}^2}
    \sum_{s=0}^k
     \frac{\beta_{2k}^2}{p^{2(k-s)^2} \beta_{2s} \beta_{2(k-s)}^2}.
  \end{align*}
  The last sum here is $1$, as is seen by taking $(m,n) = (k,k)$ in
  Lemma~\ref{lem:os} and replacing $p$ by $p^2$. This leaves us with
  $\pi_n/\beta_{2k}^2$ which, upon splitting into the cases $n$ even
  and $n$ odd, agrees with the answer in the statement of the lemma.
    
  If $n=2k+1$ or $2k+2$ then
  \begin{align*} 
    \sum_{\substack{\myell=0 \\ \myell \text{ odd }}}^n \lambda(n-\myell,n)
    B(\myell) p^{\myell+1} &= \sum_{\substack{\myell=1 \\ \myell \text{ odd }}}^{2k+1}
    \frac{\pi_n}{p^{\binom{l+1}{2}} \pi_\myell \beta_{2k+1 - \myell}}
    \cdot \frac{\pi_\myell}{p^{(\myell+1)/2} \beta_{\myell-1} \beta_{\myell+1} }
    \cdot p^{\myell+1} \\
     &= \frac{\pi_n}{\beta_{2k} \beta_{2k+2} }
    \sum_{\substack{\myell=1 \\ \myell \text{ odd }}}^{2k+1}
    \frac{\beta_{2k} \beta_{2k+2} }{p^{(\myell^2-1)/2} \beta_{2k+1-\myell}
    \beta_{\myell-1} \beta_{\myell+1}} \\
     &= \frac{\pi_n}{\beta_{2k} \beta_{2k+2}}
    \sum_{s=0}^k
    \frac{\beta_{2k}  \beta_{2k+2}}{p^{2(k+1-s)(k-s)} \beta_{2s}
    \beta_{2(k-s)} \beta_{2(k+1-s)} }.
  \end{align*}
  The last sum here is $1$, as is seen by taking $(m,n) = (k,k+1)$ in
  Lemma~\ref{lem:os} and replacing $p$ by $p^2$.  This leaves us with
  $\pi_n/(\beta_{2k} \beta_{2k+2})$ which, upon splitting into the
  cases $n$ even and $n$ odd, agrees with the answer in the statement
  of the lemma.
\end{proof}

\begin{lemma}
\label{lem:pi_and_f}
Let $\pi_i(\myell,m,n)$ be as defined in Section~\ref{sec:count}.
\begin{enumerate}
\item
 For $i,m \in \{0,1,2\}$ and $n \mygeq i$ we have
  \[ \pi_i(\myell,m,n) = \frac{1}{2}
      \left(1 + \delta_{m1} + \frac{(i-1)(m-1)}{p^{r/2}} \right)
      \lambda(r,n-i) \]
  where $r = 2\myell + m - i$ and $\delta_{m1}$ is the Kronecker delta.
%  Notice that the term involving $p^{r/2}$ only contributes
%  when $i$ and $m$ are even, in which case $r/2$ is an integer.
\item Suppose that $f(i,j,n) = \sum_{t=0}^2 (j-1)^t f_t(i,n-i-j)$.
  Then for $i,j \in \{0,1,2\}$ and $n' = n - i -j \mygeq 0$ we have
 \begin{align*}
    \sum_{\myell \mygeq 0}  & \sum_{m=0}^2  \pi_i(\myell,m,n-j)
                  f(j,m,n-2\myell)   
  = \sum_{r=0}^{n'} \lambda(r,n')  f_0(j,n'-r) \\ & + (i-1)
     \sum_{\substack{r=0 \\ r \text{ even }}}^{n'}
     \lambda(r,n')  p^{-r/2} f_1(j,n'-r) +  
     \sum_{\substack{r=0 \\ r+i \text{ even }}}^{n'}  \lambda(r,n') f_2(j,n'-r).  
 \end{align*}
\end{enumerate}
\end{lemma}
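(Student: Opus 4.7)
The plan is to treat part (i) by a finite case analysis in $i, m \in \{0, 1, 2\}$, using the explicit formulas for $\pi_0, \pi_1, \pi_2$ already obtained in Section~\ref{sec:count} and the definition~\eqref{lambdas} of $\lambda$. The three cases with $m = 1$ will be immediate: the factor $\tfrac{1}{2}(1 + \delta_{m1} + (i-1)(m-1) p^{-r/2})$ collapses to $1$, and~\eqref{lambdas} together with the defining formulas for $\pi_1(\ell,1,n)$ and $\pi_2(\ell,1,n)$ identifies the right-hand side with $\pi_i(\ell,1,n)$. The two cases $i = 1$, $m \in \{0, 2\}$ reproduce the defining formulas for $\pi_1$ literally, since~\eqref{lambdas} identifies each odd-rank $\lambda$ with a single $\pi_0(\cdot, 1, \cdot)$ and the factor evaluates to $\tfrac{1}{2}$. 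The remaining four cases, with $i \in \{0, 2\}$ and $m \in \{0, 2\}$, will all reduce by elementary manipulation to the single ratio identity
\[
  \frac{\pi_0(\ell - 1, 2, n)}{\pi_0(\ell, 0, n)} \;=\; \frac{p^\ell - 1}{p^\ell + 1},
\]
which is immediate from Lemma~\ref{pi0} after the radical factors cancel.

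For part (ii), I would substitute the formula from part (i) into the left-hand side and reindex the sum by $r = 2\ell + m - i$. The conditions $\ell \mygeq 0$ and $2\ell = r + i - m \in 2\Z$ translate to $m \myleq r + i$ and $m \equiv r + i \pmod{2}$. Since $n - 2\ell - j - m = n' - r$, the hypothesis on $f$ yields $f(j, m, n - 2\ell) = \sum_{t=0}^{2} (m - 1)^t f_t(j, n' - r)$. Exchanging the order of summation so that $r$ and $t$ stand outermost, the left-hand side rewrites as
\[
  \sum_{r \mygeq 0} \lambda(r, n') \sum_{t=0}^{2} C_t(r, i)\, f_t(j, n' - r),
\]
where
\[
  C_t(r, i) \;=\; \tfrac{1}{2} \sum_{m} (m-1)^t \Big( 1 + \delta_{m1} + (i-1)(m-1) p^{-r/2} \Big),
\]
the inner sum being over $m \in \{0, 1, 2\}$ with $m \myleq r + i$ and $m \equiv r + i \pmod{2}$. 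It then remains to verify that $C_0(r, i) = 1$, that $C_1(r, i) = (i - 1) p^{-r/2}$ when $r$ is even and vanishes when $r$ is odd, and that $C_2(r, i) = 1$ when $r + i$ is even and vanishes otherwise. Since each $C_t(r, i)$ depends on $r$ only through its parity (and, for $t = 1$, through the factor $p^{-r/2}$), this reduces to a handful of sub-cases, each a one-line computation.

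The main difficulty is purely one of book-keeping: correctly tracking the parity conditions in part (ii), and correctly interpreting the boundary cases of part (i) (for instance the convention $\lambda(-1, n-1) = 0$ must be compatible with the fact that $\pi_1(0, 0, n) = 0$). Once those conventions are laid out, each of the nine cases in part (i) and each of the parity sub-cases of $C_t(r, i)$ in part (ii) is straightforward, and there is no genuine obstacle.
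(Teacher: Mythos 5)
Your proposal is correct and follows essentially the same route as the paper: part (i) is a finite verification against the formulae of Section~\ref{sec:count} (with the four cases $i,m\in\{0,2\}$ indeed reducing to the ratio $\pi_0(\ell-1,2,n)/\pi_0(\ell,0,n)=(p^\ell-1)/(p^\ell+1)$), and part (ii) is the same reindexing by $r=2\ell+m-i$, which the paper carries out by splitting the sum according to the parity of $r+i$ rather than by isolating the coefficients $C_t(r,i)$, but the computation is identical. Your parity bookkeeping for $C_0$, $C_1$, $C_2$ checks out, including the observation (also made in the paper) that the $f_1$ term only survives for $i\in\{0,2\}$, where ``$r+i$ even'' becomes ``$r$ even''.
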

\begin{proof}
  (i) This follows from the formulae
  for the $\pi_i(\myell,m,n)$ in Section~\ref{sec:count}. 
  Notice that the term involving $p^{r/2}$ only contributes
  when $i$ and $m$ are even, in which case $r/2$ is an integer. \\
%  \noindent
(ii)    Replacing $\myell$ by $(r+i-m)/2$ and using (i)
    the left hand side becomes
    \begin{align*}
    \sum_{\substack{r=0 \\ r+i \text{ even }}}^{n'}
    & \left( \frac{1}{2}\left(1- \frac{i-1}{p^{r/2}}\right)
    \lambda(r,n') f(j,0,n-i-r) \right. \\
    & \left. + \,\, \frac{1}{2}\left(1+ \frac{i-1}{p^{r/2}}\right)
    \lambda(r,n') f(j,2,n+2-i-r) \right) \\
   & \hspace{-2em} + \sum_{\substack{r=0 \\ r+i \text{ odd }}}^{n'} \lambda(r,n')
      f(j,1,n+1-i-r).
    \end{align*}
   Writing $f$ in terms of $f_0$, $f_1$, $f_2$ this becomes 
     \begin{align*}
    \sum_{\substack{r=0 \\ r+i \text{ even }}}^{n'}
    & \left( \frac{1}{2}\left(1- \frac{i-1}{p^{r/2}}\right)
    \lambda(r,n') \big[ f_0(j,n'-r) - f_1(j,n'-r) + f_2(j,n'-r) \big] \right. \\
    & \left. + \,\, \frac{1}{2}\left(1+ \frac{i-1}{p^{r/2}}\right)
    \lambda(r,n') \big[ f_0(j,n'-r) + f_1(j,n'-r) + f_2(j,n'-r) \big] \right) \\
   & \hspace{-2em} + \sum_{\substack{r=0 \\ r+i \text{ odd }}}^{n'} \lambda(r,n')
    f_0(j,n'-r).   
     \end{align*}
     This simplifies to the expression in the statement of the lemma.
     Notice that the sum involving $f_1$ only contributes
     for $i \in \{0,2\}$
     and so the condition ``$r+i$ even'' simplifies to ``$r$ even''.
\end{proof}

The functions $\phi(i,j,n)$ and $\psi(i,j,n)$ were defined in
Definition~\ref{def:phipsi}. The aim of this appendix is to prove
Proposition~\ref{prop:solve-recur} which for convenience we now restate.

\begin{proposition}
\label{prop:phipsi}
Let $i,j \in \{0,1,2\}$ and $n \mygeq i+j$. Then
\begin{equation}
    \label{eqn:phi}
    \phi(i,j,n) = \sum_{\myell \mygeq 0} \sum_{m=0}^2 \pi_i(\myell,m,n-j)
    \phi(j,m,n-2\myell),
  \end{equation}
  and if $n$ is {\bf even} then 
 \begin{equation} 
    \label{eqn:psi}
    \psi(i,j,n) = \sum_{\myell \mygeq 0} \sum_{m=0}^2 \pi_i(\myell,m,n-j)
                  \psi(j,m,n-2\myell).
\end{equation}
\end{proposition}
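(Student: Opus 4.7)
\textbf{The plan is to} follow a common template for both identities: apply Lemma~\ref{lem:pi_and_f}(ii) to turn each right-hand side into a linear combination of sums of the form $\sum_r \lambda(r,n')\,h(n'-r)$ with $n' = n-i-j$, evaluate those $\lambda$-weighted sums in closed form using Lemmas~\ref{lem:id1}, \ref{lem:id2} and~\ref{lem:twoid}, and then match the outcome against the explicit formula for $\phi$ or $\psi$.

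\textbf{For}~\eqref{eqn:phi}, I would write $\phi(j,m,n-2\myell) = (j-1)B(n'-r) + (m-1)p^{D(r)}B(n'-r)$ with $r = 2\myell + m - i$ and $D(r) = \lfloor(n'-r+1)/2\rfloor$. In the notation of Lemma~\ref{lem:pi_and_f}(ii) this is $f_0(j,N) = (j-1)B(N)$, $f_1(j,N) = p^{\lfloor(N+1)/2\rfloor}B(N)$, $f_2 = 0$. Noting that for $r$ even one has $D(r) - r/2 = d - r$ with $d = \lfloor(n'+1)/2\rfloor$, the right-hand side of~\eqref{eqn:phi} collapses to
\[
(j-1)\sum_r \lambda(r,n')B(n'-r)\;+\;(i-1)\,p^d\sum_{r\text{ even}}\lambda(r,n')p^{-r}B(n'-r).
\]
By Lemma~\ref{lem:id2} with $(x,y,z)=(1,0,0)$ the first sum equals $A(n') = p^d B(n')$, and with $(x,y,z) = (0,p^{-n'},0)$ or $(0,0,p^{-(n'+1)})$ (according to whether $n'$ is even or odd) the second sum equals $p^{-d}B(n')$. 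Adding gives $((j-1)p^d + (i-1))B(n') = \phi(i,j,n)$.

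\textbf{For}~\eqref{eqn:psi}, I would first recast $\psi(i,j,n)$ as a polynomial in $(j-1)$ using $\delta_{j1} = 1 - (j-1)^2$, valid for $j \in \{0,1,2\}$. A calculation yields
\[
\psi_2(j,N) = \tfrac{p^{2D+1}(p-1)}{(p+1)(p^{2D+1}-1)},\quad
\psi_1(j,N) = \tfrac{(j-1)p^D(p-1)}{p^{2D+1}-1},
\]
\[
\psi_0(j,N) = \tfrac{1}{p+1} - \tfrac{\delta_{j1}(p-1)}{(p+1)(p^{2D+1}-1)},
\]
with $D = \lfloor (N+1)/2\rfloor$; the simplification of $\psi_0$ uses the further identity $(j-1)^2-1 = -\delta_{j1}$. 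Feeding these into Lemma~\ref{lem:pi_and_f}(ii) reduces the right-hand side of~\eqref{eqn:psi} to an explicit linear combination of $\sum_r \lambda(r,n')/E(r)$, $\sum_{r\text{ even}}\lambda(r,n')p^{-r}/E(r)$, and parity-restricted mass sums $\sum_{r+i\text{ even}}\lambda(r,n')\bigl[1+1/E(r)\bigr]$, where $E(r) = p^{2D(r)+1}-1$. For each of the nine pairs $(i,j) \in \{0,1,2\}^2$ the identity~\eqref{eqn:psi} then becomes a specific identity among rational functions in $p$ whose verification reduces, via the rewriting $\tfrac{1}{p^{2D(r)+1}-1} = \tfrac{p^r}{p^{n'+1}-p^r}$ valid when $n'-r$ is even, to one or two applications of Lemma~\ref{lem:id1} with suitable values of $x$, together with the normalisation $\sum_r \lambda(r,n') = 1$.

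\textbf{The main obstacle} is the parity bookkeeping needed for~\eqref{eqn:psi}. Lemma~\ref{lem:id1} evaluates $\lambda$-weighted sums under the constraint ``$n'-r$ even'', whereas the constraints produced by Lemma~\ref{lem:pi_and_f}(ii) are ``$r$ even'' and ``$r+i$ even''. The hypothesis that $n$ is even forces $n' \equiv i+j \pmod 2$, and this alignment is exactly what is needed to translate the various sums above into ``$n'-r$ even'' form. When instead $n'-r$ is odd one has $E(r) = p^{n'-r+2}-1$ rather than $p^{n'-r+1}-1$, and those contributions must be handled either by a second invocation of Lemma~\ref{lem:id1} at a different value of $x$ or by deducing them indirectly from $\sum_r\lambda(r,n') = 1$ together with the closed forms already found for the complementary parity. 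The resulting case analysis (by $n'\bmod 2$ and by $i\bmod 2$) is mechanical but accounts for the bulk of the computational content of the appendix.
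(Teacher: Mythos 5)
Your proposal follows the paper's own route: decompose $\phi$ and $\psi$ as quadratics in $(m-1)$, feed them through Lemma~\ref{lem:pi_and_f}(ii), and evaluate the resulting $\lambda$-weighted sums via Lemmas~\ref{lem:id1} and~\ref{lem:id2}; your $\phi$ computation is complete and matches the paper's (up to the normalisation $A(N)=p^{\lfloor (N+1)/2\rfloor}B(N)$), and your $\psi_t$ agree with the paper's after the $\delta_{j1}=1-(j-1)^2$ rewriting. The only difference is that for~\eqref{eqn:psi} the paper carries out the parity bookkeeping you defer: it splits into $j\in\{0,2\}$ versus $j=1$, observes that the odd-parity weights always collapse to constants (because $\psi_0$ is constant when $\delta_{j1}=0$, and the $1/(p^{2D+1}-1)$ terms from $\psi_0$ and $\psi_2$ cancel on the odd-parity range when $j=1$), and then applies Lemma~\ref{lem:id1} once with $x=p^{n'+2}+(i-1)(j-1)p^{n'/2}(p^2-1)$ or $x=p^{n'}$ respectively.
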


The condition $n \mygeq i+j$ ensures that $\pi_i(\myell,m,n-j)$ is
defined. It can only be non-zero if $2 \myell + m \myleq n-j$,
equivalently $n - 2 \myell \mygeq j + m$, in which case
$\phi(j,m,n-2\myell)$ and $\psi(j,m,n-2\myell)$ are defined.

\begin{proof}      
  We have $\phi(i,j,n) = (j-1)A(n-i-j) + (i-1)B(n-i-j)$ where $A$ and
  $B$ were defined in~\eqref{eqn:AB}.  It follows that
  $\phi(i,j,n) = \sum_{t=0}^2 (j-1)^t \phi_t(i,n-i-j)$ where
  \[ \phi_0(i,n) = (i-1) B(n), \quad \phi_1(i,n) = A(n),
    \quad \phi_2(i,n) =0. \]
  By Lemma~\ref{lem:pi_and_f}(ii) the right hand side of~\eqref{eqn:phi} is
  \[ (j-1) \sum_{r=0}^{n'} \lambda(r,n') B(n'-r) + (i-1)
    \sum_{\substack{r=0 \\ r \text{ even }}}^{n'} \lambda(r,n')
    p^{-r/2} A(n'-r) \]
  where $n' = n - i - j$.  By Lemma~\ref{lem:id2} the first sum is
  $A(n')$ and the second sum is $B(n')$. This gives
  $(j-1) A(n') + (i-1) B(n') = \phi(i,j,n)$, which is the left hand
  side of~\eqref{eqn:phi} as required.

  We have $\psi(i,j,n) = \sum_{t=0}^2 (j-1)^t \psi_t(i,n-i-j)$ where
\[ \hspace{-2em} \psi_0(i,n) = \frac{p^{2d+1} - p^{\delta_{i1}}}{(p+1)(p^{2d+1}-1)}
  , \quad \psi_1(i,n) = \frac{(i-1)p^{d}(p^2-1)}{(p+1)(p^{2d+1}-1)}, \quad
  \psi_2(i,n) = \frac{p^{2d+1}(p-1)}{(p+1)(p^{2d+1}-1)}, \]
  and $d = \lfloor \frac{n+1}{2} \rfloor$.

  Now suppose that $n$ is even.  If $j \in \{0,2\}$ then by
  Lemma~\ref{lem:pi_and_f}(ii) the right hand side of~\eqref{eqn:psi}
  is
   \[ \frac{1}{p+1} \left( \sum_{\substack{r=0 \\ n'-r \text{ even }}}^{n'} \lambda(r,n') \frac{x-p^r}{p^{n'+1}-p^r}
      \,\,+ \sum_{\substack{r=0 \\ n'-r \text{ odd }}}^{n'} \lambda(r,n') \right)
  \]
  where $n' = n-i-j$ and
  $x = p^{n'+2} + (i-1)(j-1) p^{n'/2} (p^2 - 1)$.  By
  Lemma~\ref{lem:id1} this is equal to $(x-1)/((p+1)(p^{n'+1}-1))$ if
  $i \in \{0,2\}$ and $p/(p+1)$ if $i = 1$. This is equal to
  $\psi(i,j,n)$ as required.
  
  If $j=1$ then by Lemma~\ref{lem:pi_and_f}(ii) the right hand side
  of~\eqref{eqn:psi} is
  \[ \frac{p}{p+1} \left( \sum_{\substack{r=0 \\ n'-r \text{ even }}}^{n'}
      \lambda(r,n') \frac{p^{n'}-p^r}{p^{n'+1}-p^r}
      \,\,+ \sum_{\substack{r=0 \\ n'-r \text{ odd }}}^{n'} \lambda(r,n') \right).
  \]
  By Lemma~\ref{lem:id1} this is equal to
  $(p^{n'+1}-p)/((p+1)(p^{n'+1}-1))$ if $i = 1$ and $1/(p+1)$ if
  $i \in \{0,2\}$. This is equal to $\psi(i,j,n)$ as required.
\end{proof}

\bigskip
\bigskip
 % \footnotesize

Lycka~Drakengren,
\textsc{Trinity College, Cambridge CB3 9DH, UK}\par\nopagebreak
\textit{E-mail address}: \texttt{lyckasbrev@hotmail.com}

\medskip

Tom~Fisher,
\textsc{University of Cambridge, DPMMS, Centre for Mathematical Sciences,
        Wilberforce Road, Cambridge CB3 0WB, UK}\par\nopagebreak
\textit{E-mail address}: \texttt{T.A.Fisher@dpmms.cam.ac.uk}


\begin{thebibliography}{MM}

\bibitem{BCFG} Bhargava, M., Cremona, J. E., Fisher, T. A., and
  Gajovi\'c, S. (2022).\textit{The density of polynomials of degree
    $n$ over $\Z_p$ having exactly $r$ roots in $\Q_p$}. London
  Mathematical Society, Volume 124, Issue 5, pp. 713--736.

\bibitem{BCFJK} Bhargava, M., Cremona, J. E., Fisher, T. A., Jones,
  N. G., and Keating, J. P. (2016). \textit{What is the probability
    that a random integral quadratic form in n variables has an
    integral zero?}. International Mathematics Research Notices, Issue
  12, pp. 3828--3848.

\bibitem{Cassels-RQF} Cassels, J.W.S. (1978). \textit{Rational
    Quadratic Forms}. Academic Press, London.
%\newline

\bibitem{CS} Cremona, J.E. and Sadek, M. (2020).  \textit{Local and
    global densities for Weierstrass models of elliptic curves}, to be
  published in Mathematics Research Letters, \url{arXiv:2003.08454
    [math.NT]}

\bibitem{G} Grove, L. (2002). \textit{Classical Groups and Geometric
    Algebra}. American Mathematical Society, Providence.

\bibitem{K} Kovaleva, V. (2020). \textit{A note on the distribution of
    equivalence classes of random symmetric p-adic
    matrices}. \url{arXiv:2008.10732 [math.NT]}

\bibitem{PS} Poonen, B. and Stoll, M. (1999). \textit{The Cassels-Tate
    pairing on polarized abelian varieties}. Annals of Mathematics
  (2), Volume 150, Issue 3, pp. 1109--1149.

\bibitem{S} Serre, J.-P. (1973). \textit{A Course in
    Arithmetic}. Springer-Verlag, New York.

\end{thebibliography}
\end{document}